\newcommand{\Res}{\operatorname{Res}}
\renewcommand{\Re}{\operatorname{Re}}
 \renewcommand{\a}{\alpha}
\renewcommand{\b}{\beta}
\renewcommand{\r}{{\rho}}
\renewcommand{\(}{\left\(}
\renewcommand{\)}{\right\)}
\renewcommand{\[}{\left\[}
\renewcommand{\]}{\right\]}
\numberwithin{equation}{section}
 \theoremstyle{plain}
\newtheorem{theorem}{Theorem}[section]
\newtheorem{lemma}[theorem]{Lemma}
\newtheorem{remark}[]{Remark}
\newtheorem{corollary}[theorem]{Corollary}
\def\proof{\@ifnextchar[{\@oproof}{\@nproof}}
\def\@oproof[#1][#2]{\trivlist\item[\hskip\labelsep\textit{#2 Proof of\
#1.}~]\ignorespaces}
\def\@nproof{\trivlist\item[\hskip\labelsep\textit{Proof.}~]\ignorespaces}
\begin{document}
\title[On Ramanujan's formula for $\zeta(1/2)$ and $\zeta(2m+1)$ ]{On Ramanujan's formula for $\zeta(1/2)$ and $\zeta(2m+1)$}

\author{Anushree Gupta}
\address{Anushree Gupta\\ Department of Mathematics \\
Indian Institute of Technology Indore \\
Simrol,  Indore,  Madhya Pradesh 453552, India.} 
\email{anushreegupta251@gmail.com,  msc1903141004@iiti.ac.in}

 \author{Bibekananda Maji}
\address{Bibekananda Maji\\ Department of Mathematics \\
Indian Institute of Technology Indore \\
Simrol,  Indore,  Madhya Pradesh 453552, India.} 
\email{bibek10iitb@gmail.com,  bibekanandamaji@iiti.ac.in}

\thanks{2010 \textit{Mathematics Subject Classification.} Primary 11M06; Secondary 11J81 .\\
\textit{Keywords and phrases.} Riemann zeta function,  Odd zeta values, Ramanujan's formula}

\maketitle

\begin{abstract}
Page 332 of Ramanujan's Lost Notebook contains a compelling identity for $\zeta(1/2)$,  which has been studied by many mathematicians over the years. On the same page, Ramanujan also recorded the series, 
\begin{align*}
\frac{1^r}{\exp(1^s x) - 1} + \frac{2^r}{\exp(2^s x) - 1} + \frac{3^r}{\exp(3^s x) - 1} + \cdots,
\end{align*}
where $s$ is a positive integer and $r-s$ is {\it any even} integer.  Unfortunately,  Ramanujan doesn't give any formula for it. This series was rediscovered by Kanemitsu, Tanigawa, and Yoshimoto,  although they studied it only when $r-s$ is a {\it negative even} integer.  Recently, Dixit and the second author generalized the work of Kanemitsu et al. and obtained a transformation formula for the aforementioned series with $r-s$ is {\it any even} integer. While extending the work of Kanemitsu et al.,  Dixit and the second author obtained a beautiful generalization of Ramanujan's formula for odd zeta values.  In the current paper,  we investigate transformation formulas for an infinite series, and interestingly, we derive Ramanujan's formula for $\zeta(1/2)$, Wigert's formula for $\zeta(1/k)$ as well as Ramanujan's formula for $\zeta(2m+1)$. Furthermore, we obtain a new identity for $\zeta(-1/2)$ in the spirit of Ramanujan.   
\end{abstract}

\section{Introduction}
%The inception of the theory of the Riemann zeta function was done by the German mathematician Bernhard Riemann in his seminal paper \cite{Riemann1859} in 1859.  Over the years,  this theory has been a constant source of  motivation for mathematicians to produce many beautiful results.
%The Riemann zeta function is defined by
%$$ { \zeta(s):= \sum_{n=1}^{\infty} \frac{1}{n^s}\quad \rm{for}\quad \Re(s) > 1 }.$$
% Riemann showed that $\zeta(s)$ can be analytically continued to the whole complex plane except at $s=1$.  
%He also established the following beautiful functional equation for $\zeta(s)$:
%\begin{equation}\label{symmetric form}
%\pi^{-s/2} \Gamma\left( \frac{s}{2} \right) \zeta(s) = \pi^{-(1-s)/2} \Gamma\left( \frac{1-s}{2} \right) \zeta(1-s).
%\end{equation}
%In the same paper,  he conjectured that all the non-trivial zeros of $\zeta(s)$ will lie on $\Re(s)=1/2$. This conjecture is known as the famous Riemann Hypothesis,  which is one of the Millennium Prize Problems. 
%Before Riemann,  Euler studied $\zeta(s)$ for even  positive integers. 

The study of understanding the algebraic nature of the special values of the Riemann zeta function $\zeta(s)$ has been a constant source of motivation for mathematicians to produce many beautiful results.   In 1735,  Euler established the following elegant formula for $\zeta(2m)$.
 For every positive integer $m$, 
\begin{equation}\label{Euler_zeta(2m)}
\zeta(2 m ) = (-1)^{m +1} \frac{(2\pi)^{2 m}B_{2 m }}{2 (2 m)!},
\end{equation}
where $B_{2m}$ denotes the $2m$th Bernoulli number.  
These  are certain rational numbers,  
defined by the generating function:
$${\frac{z}{e^z - 1} = \sum_{n=0}^\infty B_{n} \frac{z^n}{n!}, \quad |z| < 2\pi.}  
$$
Euler's formula along with the transcendence of $\pi$, proved by F. Lindemann,  immediately tells us that all even zeta values are transcendental. However, the arithmetic nature of the odd zeta values is yet to be determined.  Surprisingly in 1979,  Roger Ap\'{e}ry \cite{Apery79, apery2}  proved the irrationality of $\zeta(3)$.  But till today we do not know about the algebraic nature of $\zeta(3)$,  that is,  whether $\zeta(3)$ is algebraic or transcendental.  
In 2001,  Rivoal \cite{rivoal} Ball and Rivoal  \cite{ballrivoal} made a breakthrough by proving that there exist infinitely many odd zeta values which are irrational,  but their result does not tells us anything about the algebraic nature of a specific odd zeta value.
%do not guarantee that whether a specific odd zeta values is irrational or not.  
Around the same time,  Zudilin \cite{zudilin} proved that at least one of the members of $\zeta(5), \zeta(7), \zeta(9)$ or $\zeta(11)$ is irrational,  which is the current best result in this direction.  

Before going to England,  Ramanujan, in his second notebook  \cite[p.~173, Ch. 14, Entry 21(i)]{ramnote}, noted down the following remarkable formula for odd zeta values $\zeta(2 m + 1)$.

Let $\alpha$ and $\beta$ be two positive numbers such that $\alpha \beta = \pi^2$.  For any non-zero integer $m$,  
\begin{align}\label{Ramanujan's formula}
\frac{1}{(4\alpha)^m}  \left( \frac{1}{2} \zeta(2 m +1)  +  \sum_{n=1}^{\infty} \frac{n^{-2m-1}}{(e^{2n\alpha} -1 ) } \right) 
& +  \frac{(-1)^{m+1}}{(4 \beta)^m}  \left( \frac{1}{2} \zeta(2m+1) + \sum_{n=1}^{\infty} \frac{n^{-2m-1}}{(e^{2n\beta } -1 ) }   \right) \nonumber \\
&  = \sum_{k=0}^{m+1}  (-1)^{k-1} \frac{B_{2k}}{(2k)!} \frac{B_{2m+2-2k}}{(2m+2-2k)!} \alpha^{m+1-k} \beta^k.
\end{align}
This formula can also be found in Ramanujan's Lost Notebook \cite[pp.~319-320, formula (28)]{lnb}. 
In sharp contrast to Euler's formula, it falls short of providing an explicit formula for odd zeta values, %further no information is revealed about the arithmetic nature 
but it is indeed another wonderful discovery of Ramanujan that has attracted the attention of several mathematicians.
The first published proof was given by Malurkar \cite{malurkar}  in 1925,  who had no idea about its presence in Ramanujan's Notebook.
Lerch, in 1901,  proved a particular case of the formula \eqref{Ramanujan's formula},  namely,   corresponding to $\alpha=\beta= \pi$ and $m$ an odd positive integer.  Replacing $m$ by $2m+1$ in \eqref{Ramanujan's formula},  it reduces to
\begin{align}\label{Lerch}
\zeta(4m+3)+2\sum_{n=1}^{\infty}\frac{n^{-4m-3}}{(e^{2\pi n}-1)}=\pi^{4m+3}2^{4m+2}\sum_{j=0}^{2m+2}\frac{(-1)^{j+1}B_{2j}B_{4m+4-2j}}{(2j)!(4m+2-2j)!}. 
\end{align}
Note that the infinite series present in the left side of \eqref{Lerch} is a rapidly convergent series,  which indicates that $\zeta(4m+3)$ is almost a rational multiple of $\pi^{4m+3}$.  Moreover,  one can say that atleast one of the expressions $\zeta(4m+3)$ or $\sum_{n=1}^{\infty}\frac{n^{-4m-3}}{(e^{2\pi n}-1)}$ must  be transcendental since the right side expression is transcendental.  A folklore conjecture is that $\pi$ and odd zeta values do not satisfy any non-zero polynomial with rational coefficients,  which implies that all odd zeta values must be transcendental.  

Ramanujan's formula \eqref{Ramanujan's formula} has a deep connection with the theory of  Eisenstien series over $SL_{2}(\mathbb{Z})$ and their Eichler integrals.  To know more about this connection,  one can see \cite{berndt-straub},  \cite{GMR}.  In 1977,  Bruce Berndt  \cite{berndt77} obtained a  modular transformation formula for a generalized Eisenstien series from which he was able to derive Euler's formula for $\zeta(2m)$ as well as Ramanujan's formula for $\zeta(2m+1)$.  Over the years, many mathematicians found generalizations of \eqref{Ramanujan's formula} in various directions.  Readers are encouraged to see the paper of Berndt and Straub \cite{berndtstraubzeta} for more information on the history of Ramanujan's formula for odd zeta values.  
%\subsection{Work of Kanemitsu, Tanigawa,  and Yoshimoto}

In 2001,  Kanemitsu,  Tanigawa and Yoshimoto  \cite{KTY01} studied a generalized Lambert series defined by
\begin{equation}\label{Kanemitsu et al}
\sum_{n=1}^{\infty} \frac{n^{N-2h}}{\exp(n^N x) - 1},
\end{equation}
where $h,  N \in \mathbb{N}$ such that  $1 \leq h \leq N/2$.  They established an important modular transformational relation for this generalized Lambert series which enabled them to find a formula for the Riemann zeta function at rational arguments.  Recently,  Dixit and the second author \cite{DM20} pointed out that the above generalized Lambert series is in fact present on page 332 of Ramanujan's Lost Notebook \cite{lnb} with more general conditions on the parameters.  At the end of page 332,  Ramanujan wrote 
\begin{align}\label{Ramanujan_original}
\frac{1^r}{\exp(1^s x) - 1} + \frac{2^r}{\exp(2^s x) - 1} + \frac{3^r}{\exp(3^s x) - 1} + \cdots,
\end{align}
where $s$ is a positive integer and $r-s$ is {\it any even} integer.  Unfortunately,  Ramanujan does not give any formula for the series \eqref{Ramanujan_original}.  Comparing \eqref{Kanemitsu et al} and \eqref{Ramanujan_original},  one can observe that Kanemitsu et al.  studied \eqref{Ramanujan_original} with the restriction that  $r-s$ is a {\it negative even} integer that lies in a restricted domain.   This motivated Dixit and the second author to generalize the main result of Kanemitsu et al. \cite[Theorem 1.1]{KTY01} and find a formula for \eqref{Ramanujan_original}.  While extending the main result of Kanemitsu et al.,  Dixit and the second author \cite[Theorem 1.2]{DM20} obtained the following beautiful generalization of the Ramanujan's formula for $\zeta(2m+1)$.  

Let $N \geq 1 $ be an odd positive integer and $\a,\b$ be two positive real numbers such that $\a\b^{N}=\pi^{N+1}$.  Then for any integer non-zero integer $m \neq 0$,  we have
\begin{align}\label{zetageneqn}
&\a^{-\frac{2Nm}{N+1}}\left(\frac{1}{2}\zeta(2Nm+1)+\sum_{n=1}^{\infty}\frac{n^{-2Nm-1}}{\textup{exp}\left((2n)^{N}\a\right)-1}\right)\nonumber\\
&=\left(-\b^{\frac{2N}{N+1}}\right)^{-m}\frac{2^{2m(N-1)}}{N}\Bigg(\frac{1}{2}\zeta(2m+1)+(-1)^{\frac{N+3}{2}}\sum_{j=\frac{-(N-1)}{2}}^{\frac{N-1}{2}}(-1)^{j}\sum_{n=1}^{\infty}\frac{n^{-2m-1}}{\textup{exp}\left((2n)^{\frac{1}{N}}\b e^{\frac{i\pi j}{N}}\right)-1}\Bigg)\nonumber\\
&\quad+(-1)^{m+\frac{N+3}{2}}2^{2Nm}\sum_{j=0}^{\left\lfloor\frac{N+1}{2N}+m\right\rfloor}\frac{(-1)^jB_{2j}B_{N+1+2N(m-j)}}{(2j)!(N+1+2N(m-j))!}\a^{\frac{2j}{N+1}}\b^{N+\frac{2N^2(m-j)}{N+1}}.
\end{align}
This formula is of interest as it establishes a relation between two distinct odd zeta values,  namely $\zeta(2m+1)$ and $\zeta(2Nm+1)$ when $N>1$.
To derive the above generalization of Ramanujan's formula for $\zeta(2m+1)$,  Dixit and the second author studied  the following line integration representation of \eqref{Kanemitsu et al}:
\begin{align*}
\sum_{n=1}^{\infty} \frac{n^{N-2h}}{\exp(n^N x) - 1} =  \frac{1}{2\pi i} \int_{c_0-i \infty}^{c_0+ i \infty} \Gamma(s) \zeta(s) \zeta(Ns-(N-2h)) x^{-s} \mathrm{d}s,
\end{align*}
where $c_0 > \max(1, (N-2h+1)/N)$.  

Inspired by the work of Dixit and the second author,  in the current paper,  we would like to investigate the following line integration: 
\begin{align*}
 \frac{1}{2\pi i} \int_{c-i \infty}^{c+ i \infty} \Gamma(s) \zeta(ks) \zeta(s-r) x^{-s} \mathrm{d}s,
\end{align*}
where $k\in \mathbb{N},  r \in \mathbb{Z}$ and $c > \max\left(  1/k, 1+r \right)$.  
Surprisingly,  when $k-r$ is an  {\it even} integer,  we obtain nice transformation formulas which allow us to derive many well-known formulas in the literature,  for example,  Ramanujan's formula for $\zeta(1/2)$,  Wigert's formula for $\zeta(1/k)$ for $k\geq$ 2 even,  and Ramanujan's formula for odd zeta values. 
Furthermore,  we also obtain a new identity for $\zeta(-1/2)$ analogous to Ramanujan's formula for $\zeta(1/2)$.

\section{Preliminaries}

The study of the well-known divisor function $d(n)$ plays an important  role in analytic number theory.  This function has been generalized in many directions, and one of the important generalizations is defined by
\begin{align*}
\sigma_{k}(m) := \sum_{d|m} d^k,
\end{align*}
where $k\in \mathbb{Z}$.  
In this paper,  we encountered two new divisor functions that are defined by
\begin{align}\label{definition of D_{k,r}}
D_{k,r}(n) :=\sum_{d^k|n} \left(\frac{n}{d^k}\right)^r, \quad {\rm and} \quad S_{k,r}(n) :=\sum_{d^k|n} \left(\frac{n}{d^k} \right)^{-r}d^{k-1},
\end{align}
%and
%\begin{align}
%S_{k,r}(n) :=\sum_{d^k|n} \left(\frac{n}{d^k} \right)^{-r}d^{k-1},
%\end{align}
where $k \in \mathbb{N}$ and $r \in \mathbb{Z}$.  At this point,  it may look artificial to define the above two divisor functions, but in due time we will see that these divisor functions naturally arise with the theory.  One can easily observe that 
\begin{align}\label{relation_D_sigma}
D_{1, r}(n) = \sigma_{r}(n),  \quad {\rm and}\quad S_{1,r}= \sigma_{-r}(n). 
\end{align}
It is well-known that $  \zeta(s) \zeta(s-k) = \sum_{n=1}^{\infty} \frac{\sigma_{k}(n)}{n^s}$, absolutely convergent for $\Re(s) > {\rm max }\{1, k+1 \} $.
We now look for the Dirichlet series associated to $D_{k,r}(n)$ and $S_{k,r}(n)$ in terms of the Riemann zeta function.  Using the definition \eqref{definition of D_{k,r}},  we  see that
\begin{align}\label{gen_D(k,r)}
\sum_{n=1}^{\infty}\frac{D_{k,r}(n)}{n^s}
 =\sum_{n=1}^{\infty}\left(\sum_{n_{1}^k|n} \left(\frac{n}{n_{1}^k} \right)^r \right) \frac{1}{n^s}
= \sum_{n_{1}=1}^{\infty} \frac{1}{n_{1}^{ks}} \sum_{n_{2}=1}^{\infty} \frac{1}{n_{2}^{s-r}}
=\zeta(ks)\zeta(s-r), 
\end{align}
 valid for $ \Re(s)>\max\left(\frac{1}{k}, 1+r \right)$.  Similarly,  with the help of the definition \eqref{definition of D_{k,r}}, one can show that,  for $\Re(s) > \max(1,  1-r)$, 
\begin{align}\label{gen_S(k,r)}
\sum_{n=1}^{\infty}\frac{S_{k,r}(n)}{n^s}  = \zeta(1+ks-k)\zeta(s+r).
\end{align}
%\begin{align}
%\sum_{n=1}^{\infty}\frac{S_{k,r}(n)}{n^s}
%&=\sum_{n=1}^{\infty}\left(\sum_{n_{1}^k|n} \left(\frac{n}{n_{1}^k} \right)^{-r}n_{1}^{k-1} \right) \frac{1}{n^s}=\sum_{n=1}^{\infty} \left(\sum_{n_{1}^k n_{2}=n} n_{1}^{k-1}n_{2}^{-r} \right) \frac{1}{n^s}\nonumber\\&
%=\sum_{n_{1}=1}^{\infty} \frac{1}{n_{1}^{ks-k+1}} \sum_{n_{2}=1}^{\infty} \frac{1}{n_{2}^{s+r}}\nonumber\\&
%=\zeta(1+ks-k)\zeta(s+r) \quad \rm{for}\quad \Re(s)>max(1, 1-r)
%\end{align}
%{\bf From here onwards corrections are required.}
We observe that the Lambert series $\sum_{m=1}^{\infty}\sigma_{k}(m)e^{-mx}$ associated to the divisor function $\sigma_{k}(m)$ is in fact present in Ramanujan's formula \eqref{Ramanujan's formula} as it can be clearly seen from the relation that 
\begin{align*}
\sum_{m=1}^{\infty}\sigma_{k}(m)e^{-mx} =\sum_{m=1}^{\infty}\left(\sum_{d|m}d^{k}\right)e^{-mx} =\sum_{m=1}^{\infty}\sum_{d=1}^{\infty}d^k e^{-mdx} 
= \sum_{n=1}^{\infty}\frac{n^k}{e^{nx}-1}. 
\end{align*}
Let us try to find a similar expression for the Lambert series $\sum_{n=1}^{\infty}D_{k,r}(n)e^{-nx}$ associated to the new divisor function $D_{k,r}(n)$. 
We know that,  for $x>0$,  $ \sum_{m=1}^{\infty} e^{-n^{k}mx}=\frac{1}{e^{n^{k}x}-1}$. Now  differentiating both sides of this series $r$ times, and then taking sum over $n$, we obtain
\begin{align}\label{D(k,r) for r positive}
&  \sum_{m=1}^{\infty}  (-1)^{r}(n^{k} m)^{r} e^{-n^{k}mx}  = \frac{\mathrm{d}^r }{\mathrm{d}x^r}\left(\frac{1}{e^{n^{k}x}-1}\right)\nonumber\\
\Rightarrow & \sum_{n=1}^\infty \sum_{m=1}^{\infty}  m^{r} e^{-n^{k}mx}  = \sum_{n=1}^\infty \frac{(-1)^r}{n^{kr}}\frac{\mathrm{d}^r }{\mathrm{d}x^r}\left(\frac{1}{e^{n^{k}x}-1}\right)\nonumber\\
 \Rightarrow &  \sum_{n=1}^{\infty}D_{k,r}(n)e^{-nx}=\sum_{n=1}^{\infty}\frac{(-1)^r}{n^{kr}}\frac{\mathrm{d}^r }{\mathrm{d}x^r}\left(\frac{1}{e^{n^{k}x}-1}\right),\quad {\rm for}\,\,  {r\geq0}.
\end{align}

Next, we state another important special function,  the Meijer $G$-function,  which is the generalization of many well-known special functions in the literature.  

Let $0\leq m \leq q$, $0\leq n \leq p$.  Let $a_1, \cdots, a_p$ and $b_1, \cdots, b_q$ complex numbers such that $a_i - b_j \not\in \mathbb{N}$ for $1 \leq i \leq n$ and $1 \leq j \leq m$.  Then the Meijer $G$-function \cite[p.~415, Definition 16.17]{NIST}  is defined by the  line integral:
\begin{align}\label{MeijerG}
G_{p,q}^{\,m,n} \!\left(  \,\begin{matrix} a_1,\cdots , a_p \\ b_1, \cdots , b_q \end{matrix} \; \Big| z   \right) = \frac{1}{2 \pi i} \int_L \frac{\prod_{j=1}^m \Gamma(b_j + s) \prod_{j=1}^n \Gamma(1 - a_j -s) z^{-s}  } {\prod_{j=m+1}^q \Gamma(1 - b_j - s) \prod_{j=n+1}^p \Gamma(a_j + s)}\mathrm{d}s,
\end{align}
where the vertical line of integration $L$,  going from $-i \infty$ to $+i \infty$, separates the poles of the factors $\Gamma(1-a_j-s)$ from those of the factors   $\Gamma(b_j+s)$. This integral converges  if $p+q < 2(m+n)$ and $|\arg(z)| < (m+n - \frac{p+q}{2}) \pi$.

Now we are ready to state the main results.

%The main results of this paper are stated in the next chapter.

\section{Main results}

\begin{theorem}\label{k even r even}

Let $k \geq 2$ and $r$ be  an  even  integers.  Let $D_{k,r}(n)$ and $S_{k,r}(n)$ be defined as in \eqref{definition of D_{k,r}}. 
Then for any $x>0$, we have
 \begin{align*}
 \sum_{n=1}^{\infty} D_{k,r}(n) e^{-nx}=&-\frac{1}{2}\zeta(-r)+\frac{1}{k}\Gamma\left(\frac{1}{k}\right)\zeta\left(\frac{1}{k}-r\right)x^{- \frac{1}{k}}+R_{1+r}+\frac{(-1)^{\frac{k+r-2}{2}} (2\pi)^{\frac{k+1-2r}{2}}}{x\, k^{\frac{2k-1}{2}}}
  \\& \times \sum_{j=-(k-1)}^{(k-1)} {}^{''} \sum_{n=1}^{\infty} S_{k,r}(n)\; G_{0,k}^{\,k,0} \!\left(  \,\begin{matrix}\{\}\\r,-\frac{1}{k},\cdots ,-\frac{(k-1)}{k} \end{matrix} \; \Big| X(j)   \right),
 \end{align*}
 where $''$ means summation runs over $ j=-(k-1),-(k-3), \cdots ,(k-3),(k-1)$,  and
\begin{align}\label{X(j)}
 X(j):= X_{x,n,k}(j) :=\frac{e^{-\frac{i\pi j}{2}} (2\pi)^{k+1}n}{k^{k} x},  
 \end{align} 
 and
 \begin{align}\label{R(1+r)}
  R_{1+r} =
  \begin{cases}
    r! \zeta (k(1+r )) x^{-(1+r)}, & {\rm if  }\,\, r \geq 0, \\
   \frac{(-1)^{1+r}}{(-(1+r))!} k \zeta' (k(1+r)) x^{-(1+r)}, & {\rm if}\,\,   r<0. 
  \end{cases}
\end{align}
\end{theorem}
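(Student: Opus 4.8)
The plan is to realize the Lambert series as a Mellin--Barnes integral and then move the line of integration. Using the elementary transform $e^{-y}=\frac{1}{2\pi i}\int_{(c)}\Gamma(s)y^{-s}\,\mathrm{d}s$ together with the Dirichlet series \eqref{gen_D(k,r)}, I would first record, for $c>\max(1/k,1+r)$,
\begin{equation*}
\sum_{n=1}^{\infty}D_{k,r}(n)e^{-nx}=\frac{1}{2\pi i}\int_{(c)}\Gamma(s)\zeta(ks)\zeta(s-r)x^{-s}\,\mathrm{d}s,
\end{equation*}
the interchange of summation and integration being justified by absolute convergence and the exponential decay of $\Gamma$ on vertical lines (Stirling).

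Next I would push the contour to a line $\Re(s)=c'$ with $c'<\min(0,r)$, collecting the residues of the poles crossed. The integrand has a simple pole at $s=1/k$ from $\zeta(ks)$, with residue $\frac1k\Gamma(1/k)\zeta(1/k-r)x^{-1/k}$; a simple pole at $s=0$ from $\Gamma(s)$, with residue $\zeta(0)\zeta(-r)=-\tfrac12\zeta(-r)$; and the pole at $s=1+r$ from $\zeta(s-r)$. Crucially, since $k$ is even, $\zeta(ks)$ vanishes at every $s=-n$ with $n\ge 1$, so the remaining poles of $\Gamma(s)$ contribute nothing, with one exception: when $r<0$ the point $s=1+r$ is itself a (negative, odd) pole of $\Gamma$, so the pole is double, and because $\zeta(k(1+r))=0$ there its residue involves $\zeta'(k(1+r))$. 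Evaluating these residues reproduces the two cases of $R_{1+r}$ in \eqref{R(1+r)} together with the two closed-form terms in the statement.

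It then remains to match the shifted integral on $\Re(s)=c'$ with the Meijer-$G$ sum. Here I would apply the functional equation $\zeta(w)=\chi(w)\zeta(1-w)$ to both $\zeta(ks)$ and $\zeta(s-r)$; as $c'$ lies far to the left, $\zeta(1-ks)$ and $\zeta(1-s+r)$ expand into absolutely convergent Dirichlet series, and the substitution $w=1-s$ in \eqref{gen_S(k,r)} shows that their product equals $\sum_{n=1}^{\infty}S_{k,r}(n)n^{s-1}$. After interchanging sum and integral, each term carries $\Gamma(s)\chi(ks)\chi(s-r)$. Using that $r$ is even to write $\sin(\tfrac{\pi(s-r)}{2})=(-1)^{r/2}\sin(\tfrac{\pi s}{2})$, applying the Gauss multiplication formula to $\Gamma(1-ks)$ (which manufactures the parameters $-1/k,\dots,-(k-1)/k$ and the powers $k^{1/2-ks}$ and $(2\pi)^{(1-k)/2}$), and pairing the leftover factor $\Gamma(1-s)$ with $\Gamma(s)$ via $\Gamma(s)\Gamma(1-s)=\pi/\sin(\pi s)$, the trigonometric factors collapse to $\sin(\tfrac{\pi ks}{2})/\cos(\tfrac{\pi s}{2})$. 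The Chebyshev-type identity $\sin(\tfrac{\pi ks}{2})/\cos(\tfrac{\pi s}{2})=(-1)^{k/2}\sum_{j}e^{i\pi j/2}e^{i\pi js/2}$, summed over $j=-(k-1),-(k-3),\dots,(k-1)$, then splits the integral into $k$ pieces, and the phase converts the base $\tfrac{(2\pi)^{k+1}n}{k^{k}x}$ into $X(j)$ of \eqref{X(j)}. Finally I would identify each remaining integral -- a pure product of Gamma functions against a power -- as a Meijer-$G$ function by means of the reciprocal-argument relation $G_{p,q}^{m,n}(z)=G_{q,p}^{n,m}(1/z)$ and the parameter-shift relation $G_{0,k}^{k,0}(z\mid\{b_j+1\})=z\,G_{0,k}^{k,0}(z\mid\{b_j\})$; it is precisely the factor $z=X(j)$ from the latter that, together with $\tfrac{1}{n}$ coming from $S_{k,r}(n)n^{s-1}$, supplies the overall $1/x$ and the constant $\frac{(-1)^{(k+r-2)/2}(2\pi)^{(k+1-2r)/2}}{x\,k^{(2k-1)/2}}$ once the elementary powers of $2,\pi,k$ are collected. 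The convergence condition $|\arg X(j)|=\tfrac{\pi|j|}{2}\le\tfrac{\pi(k-1)}{2}<\tfrac{\pi k}{2}$ ensures each Meijer-$G$ integral converges.

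The main obstacle will be this last stage: keeping the bookkeeping of all the constants, signs, and phases exact while simultaneously invoking the Gauss multiplication formula, the reflection formula, the trigonometric sum, and the two Meijer-$G$ identities, so that the lower parameters land exactly on $\{r,-1/k,\dots,-(k-1)/k\}$ and the prefactor emerges as stated. A secondary technical point is the rigorous justification of the contour shift and of the interchange of $\sum_{n}$ with the integral after the functional equation, both of which rest on Stirling-type growth estimates for the Gamma and zeta factors.
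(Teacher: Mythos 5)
Your proposal is correct and follows essentially the same route as the paper's proof: the Mellin--Barnes representation, the contour shift with the identical residue analysis (including the key observation that the trivial zeros of $\zeta(ks)$ for $k$ even cancel the poles of $\Gamma(s)$ and reduce the would-be double pole at $s=1+r$ to a simple one producing $\zeta'(k(1+r))$), the functional equation applied to both zeta factors, the Dirichlet series expansion into $S_{k,r}(n)$, the Gauss multiplication and Euler reflection formulas, the finite exponential expansion of the trigonometric ratio, and the identification with $G_{0,k}^{\,k,0}$. The only difference is organizational: the paper substitutes $s \to 1-s$ immediately after invoking the functional equations (so its ratio is $\sin(\pi k s/2)/\sin(\pi s/2)$ and the Meijer $G$-function appears directly with argument $X(j)$), whereas you defer that change of variable to the end via the reciprocal-argument and parameter-shift relations for $G$, which yields the argument $X(-j)$ with compensating phases $i^{j}$ --- harmless, since the sum over $j$ is symmetric under $j \leftrightarrow -j$ and the constants collapse to the same prefactor $(-1)^{(k+r-2)/2}(2\pi)^{(k+1-2r)/2}x^{-1}k^{-(2k-1)/2}$.
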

As an immediate consequence of the above identity,  we obtain Ramanujan's formula for $\zeta(\frac{1}{2})$. 
%Substituting $k=2$ and $r=0$ in Theorem \ref{k even r even} and upon simplification,  we obtain
\begin{corollary}\label{zeta(1/2)} 
Let $\alpha$ and $\beta$ be two positive numbers such that $\alpha \beta=4\pi^{3}$.Then
\begin{align}\label{Ramanujan_zeta(1/2)}
\sum_{n=1}^{\infty}\frac{1}{e^{n^{2}\alpha}-1}=\frac{1}{4}+\frac{\pi^2}{6 \alpha}+\frac{\sqrt{\beta}}{4\pi}\zeta\left(\frac{1}{2}\right)+\frac{\sqrt{\beta}}{4\pi}\sum_{m=1}^{\infty}   \frac{\cos(\sqrt{m\beta})-\sin(\sqrt{m\beta})-e^{-\sqrt{m\beta}}} { \sqrt{m} ( \cosh(\sqrt{m\beta})-\cos(\sqrt{m\beta}))}.
\end{align}
\end{corollary}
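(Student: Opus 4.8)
The plan is to specialize Theorem~\ref{k even r even} to the admissible pair $k=2$, $r=0$ (both even, $k\geq 2$) and to set $x=\a$. First I would match the left-hand sides. Expanding the geometric series gives $\sum_{n=1}^{\infty}\frac{1}{e^{n^{2}\a}-1}=\sum_{n=1}^{\infty}\sum_{m=1}^{\infty}e^{-n^{2}m\a}$, and collecting terms according to $N=n^{2}m$ shows the coefficient of $e^{-N\a}$ is the number of $n$ with $n^{2}\mid N$, i.e. $D_{2,0}(N)$. Hence the series in the corollary is exactly $\sum_{N=1}^{\infty}D_{2,0}(N)e^{-N\a}$, which is the left-hand side of the theorem.

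Next I would dispatch the three elementary terms. From $\zeta(0)=-\tfrac12$ we get $-\tfrac12\zeta(0)=\tfrac14$; the second term is $\tfrac12\Gamma(\tfrac12)\zeta(\tfrac12)\a^{-1/2}=\tfrac{\sqrt{\pi}}{2}\zeta(\tfrac12)\a^{-1/2}$, which under $\a\b=4\pi^{3}$ (so $\sqrt{\b}=2\pi^{3/2}\a^{-1/2}$) equals $\tfrac{\sqrt{\b}}{4\pi}\zeta(\tfrac12)$; and since $r=0\geq 0$, the remainder term \eqref{R(1+r)} is $R_{1}=0!\,\zeta(2)\a^{-1}=\tfrac{\pi^{2}}{6\a}$. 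These already reproduce the first three summands of \eqref{Ramanujan_zeta(1/2)}.

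The substantive step is the Meijer $G$-sum. For $k=2,r=0$ the prefactor simplifies to $\tfrac{\pi^{3/2}}{\a}$, the index $j$ runs over $\{-1,1\}$, and the $G$-function collapses: using the reduction $G_{0,2}^{\,2,0}\!\left(\begin{smallmatrix}-\\ 0,-1/2\end{smallmatrix}\,\big|\,z\right)=2z^{-1/4}K_{1/2}(2\sqrt{z})$ together with $K_{1/2}(w)=\sqrt{\pi/(2w)}\,e^{-w}$, one finds it equals $\sqrt{\pi}\,z^{-1/2}e^{-2\sqrt{z}}$. From \eqref{X(j)} and $\a\b=4\pi^{3}$ I get $X(\pm1)=\mp i\b n/2$, so the branches $j=\pm1$ are complex conjugates; summing them turns $e^{-2\sqrt{X(j)}}$ into the real combination $e^{-\sqrt{\b n}}\bigl(\cos\sqrt{\b n}-\sin\sqrt{\b n}\bigr)$, after rewriting $\cos(\theta+\tfrac{\pi}{4})$ via $\cos\theta-\sin\theta$. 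This is where the oscillatory pattern of \eqref{Ramanujan_zeta(1/2)} first emerges.

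Finally I would perform the $n$-summation. Writing $S_{2,0}(n)=\sum_{a^{2}\mid n}a$ and putting $n=a^{2}b$ factors the double sum, and the $a$-sum becomes a geometric series $\sum_{a=1}^{\infty}e^{-a\theta}(\cos a\theta-\sin a\theta)$ with $\theta=\sqrt{\b b}$; evaluating $\sum_{a\geq1}w^{a}=\tfrac{w}{1-w}$ for $w=e^{(-1+i)\theta}$ and taking $\Re-\Im$ yields $\tfrac{\cos\theta-\sin\theta-e^{-\theta}}{2(\cosh\theta-\cos\theta)}$, which manufactures the $\cosh-\cos$ denominator. Collecting constants and using $\a\b=4\pi^{3}$ once more to verify that the resulting prefactor $\tfrac{\pi^{2}}{\a\sqrt{\b}}$ equals $\tfrac{\sqrt{\b}}{4\pi}$ then completes the identification with \eqref{Ramanujan_zeta(1/2)}. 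The main obstacle is this last block: getting the branch of $K_{1/2}$ and the phases of $\sqrt{X(\pm1)}$ correct so that the conjugate pair assembles into the real trigonometric series, and then recognizing that the $S_{2,0}$-weighted geometric sum is precisely what produces the hyperbolic denominator.
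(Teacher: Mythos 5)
Your proposal is correct and follows essentially the same route as the paper: specialize Theorem \ref{k even r even} at $k=2$, $r=0$, reduce the Meijer $G$-function to $\sqrt{\pi}\,z^{-1/2}e^{-2\sqrt{z}}$, exploit the conjugate pair $j=\pm 1$, unfold $S_{2,0}$ via $n=d^{2}m$, and sum the resulting geometric series to produce the $\cosh-\cos$ denominator. The only cosmetic differences are that you reduce the $G$-function through Lemma \ref{MeijerG(2,0,0,2)} and the elementary form of $K_{1/2}$ rather than the paper's Lemma \ref{MeijerG(k,0,0,k)} (both yield the same expression), and you extract real parts before performing the geometric $d$-sum, whereas the paper sums over $d$ in complex form first and then applies Lemma \ref{dixit_maji_lemma 1}; these reorderings are algebraically equivalent.
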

Ramanujan\footnote{Ramanujan \cite[p.~332]{lnb} missed the factor $1/\sqrt{m}$ on the right side series expression of \eqref{Ramanujan_zeta(1/2)}.} recorded this formula on the page 332 of his Lost Notebook \cite{lnb},  where Dixit and the second author encountered the series \eqref{Ramanujan_original}.  On the same page, Ramanujan mentioned another form of this identity, which can also be found in Ramanujan's second notebook \cite[Entry 8, Chapter 15]{ramnote} and in \cite[p.~314]{bcbramsecnote}.  The formula \eqref{Ramanujan_zeta(1/2)} has been generalized by many mathematicians,  for more information readers can see \cite[pp.~191-193]{andrews-berndtIV},  \cite[p.~859,  Theorem 10.1]{pathways}.

More generally,  substituting $r=0$ in Theorem \ref{k even r even}, we obtain the following Wigert's formula \cite{wigert} for $\zeta(\frac{1}{k})$, for $k\geq2$ even. 
\begin{corollary}\label{wigert formula for zeta(1/k)}
For  any $x>0$ and $k\geq 2$ even,  we have
\begin{align}
\sum_{n=1}^{\infty}\frac{1}{e^{n^{k}x}-1}=&\frac{\zeta(k)}{x}+\frac{1}{k}\Gamma\left(\frac{1}{k}\right)\zeta\left(\frac{1}{k}\right)\left(\frac{1}{x}\right)^{\frac{1}{k}}+\frac{1}{4}+  \frac{(-1)^{\frac{k}{2}-1}}{k}\left(\frac{2\pi}{x}\right)^{\frac{1}{k}} \nonumber \\
& \times \sum_{j=0}^{\frac{k}{2}-1}  \Bigg\{  e^{\frac{i\pi(2j+1)(k-1)}{2k}} \; \bar{L}_{k}\left(2\pi \left(\frac{2\pi}{x}\right)^{\frac{1}{k}} e^{-\frac{i \pi (2j+1))}{2k}}  \right) \nonumber \\
&+ e^{-\frac{i\pi(2j+1)(k-1)}{2k}} \; \bar{L}_{k}\left(2\pi \left(\frac{2\pi}{x}\right)^{\frac{1}{k}} e^{\frac{i \pi (2j+1)}{2k}} \right)  \Bigg\},  \label{Wigert}
\end{align}
where 
\begin{align*}
\bar{L}_k(x):= \sum_{n=1}^\infty \frac{n^{\frac{1}{k}-1 }}{\exp(x n^{\frac{1}{k}} )-1}.
\end{align*}
\end{corollary}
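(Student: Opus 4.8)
The plan is to specialize Theorem \ref{k even r even} to $r=0$ and then carry out two reductions: first dispose of the elementary terms, and second collapse the Meijer $G$-function into the Lambert-type series $\bar L_k$. Setting $r=0$, the generating identity \eqref{gen_D(k,r)} reads $\sum_n D_{k,0}(n)n^{-s}=\zeta(ks)\zeta(s)$; writing $\frac{1}{e^{n^kx}-1}=\sum_{m\geq1}e^{-n^kmx}$ and collecting the coefficient of $e^{-Nx}$ shows that the left-hand side of the theorem is exactly $\sum_{n=1}^{\infty}\frac{1}{e^{n^kx}-1}$, since $D_{k,0}(N)=\#\{d:d^k\mid N\}$. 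On the right I would evaluate the three elementary contributions: $-\tfrac12\zeta(-r)\big|_{r=0}=-\tfrac12\zeta(0)=\tfrac14$; the middle term is literally $\tfrac1k\Gamma(\tfrac1k)\zeta(\tfrac1k)x^{-1/k}$; and since $r=0\geq0$ the quantity $R_{1+r}$ from \eqref{R(1+r)} equals $0!\,\zeta(k)x^{-1}=\zeta(k)/x$. These already account for the first three summands of \eqref{Wigert}.

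The heart of the argument is to show that the Meijer $G$-function in Theorem \ref{k even r even} with $r=0$ reduces to an elementary function. Starting from the Mellin--Barnes representation \eqref{MeijerG}, the integrand carries the product $\prod_{j=0}^{k-1}\Gamma\!\big(s-\tfrac{j}{k}\big)$. I would apply the Gauss (Legendre) multiplication formula; using the set identity $\{s-\tfrac jk:0\leq j\leq k-1\}=\{(s-\tfrac{k-1}{k})+\tfrac{\ell}{k}:0\leq\ell\leq k-1\}$ this gives $\prod_{j=0}^{k-1}\Gamma(s-\tfrac jk)=(2\pi)^{(k-1)/2}k^{\,k-1/2-ks}\Gamma(ks-k+1)$. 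Substituting into \eqref{MeijerG} and changing the contour variable to $u=ks$ turns the integral into the single-Gamma Mellin--Barnes integral for $G_{0,1}^{\,1,0}$, namely $\frac{1}{2\pi i}\int\Gamma((1-k)+u)(kz^{1/k})^{-u}\,\mathrm du=(kz^{1/k})^{1-k}e^{-kz^{1/k}}$. Collecting the powers of $2\pi$ and $k$ then yields the clean identity
\begin{align*}
G_{0,k}^{\,k,0}\!\left(\,\begin{matrix}\{\}\\0,-\tfrac1k,\cdots,-\tfrac{k-1}{k}\end{matrix}\;\Big|\;z\right)=(2\pi)^{(k-1)/2}k^{-1/2}\,z^{\frac1k-1}e^{-kz^{1/k}},
\end{align*}
valid on the sector $|\arg z|<\tfrac{k}{2}\pi$ guaranteed by the convergence conditions stated after \eqref{MeijerG}.

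With this in hand I would substitute $z=X(j)$ from \eqref{X(j)}. Writing $X(j)=A\,n\,e^{-i\pi j/2}$ with $A=(2\pi)^{k+1}/(k^kx)>0$, the exponential argument becomes $kX(j)^{1/k}=2\pi(2\pi/x)^{1/k}n^{1/k}e^{-i\pi j/(2k)}$, while the prefactor $X(j)^{\frac1k-1}$ produces the phase $e^{i\pi j(k-1)/(2k)}$ together with a factor $n^{\frac1k-1}$ and constants. The $n$-sum then has the form $\sum_{n}S_{k,0}(n)\,n^{\frac1k-1}e^{-b\,n^{1/k}}$ with $b=2\pi(2\pi/x)^{1/k}e^{-i\pi j/(2k)}$; unfolding $S_{k,0}(n)=\sum_{d^k\mid n}d^{k-1}$ via $n=d^km$ collapses the $d$-summation into a geometric series, giving $\sum_{m}\frac{m^{\frac1k-1}}{e^{bm^{1/k}}-1}=\bar L_k(b)$. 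Finally, because $k$ is even the index $j$ runs over the odd integers $\pm1,\pm3,\dots,\pm(k-1)$; pairing $j=2j'+1$ with $j=-(2j'+1)$ for $j'=0,\dots,\tfrac k2-1$ groups the two conjugate phases, and gathering the accumulated constants reproduces the overall factor $\tfrac{(-1)^{k/2-1}}{k}(2\pi/x)^{1/k}$ and the two-term sum in \eqref{Wigert}.

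The main obstacle I expect is not the algebra but the analytic bookkeeping. One must check that the contour manipulation in the Gauss-multiplication step stays within the domain of convergence of \eqref{MeijerG}: this is where $|\arg X(j)|=\tfrac{\pi|j|}{2}\leq\tfrac{\pi(k-1)}{2}<\tfrac{\pi k}{2}$ is used. One must also justify interchanging the $n$-summation with the $n=d^km$ unfolding, which requires $\Re(b)>0$. The latter holds precisely because every admissible $j$ satisfies $\big|\tfrac{\pi j}{2k}\big|\leq\tfrac{\pi(k-1)}{2k}<\tfrac{\pi}{2}$, so that $\cos\!\big(\tfrac{\pi j}{2k}\big)>0$ and the defining series for $\bar L_k(b)$ converges.
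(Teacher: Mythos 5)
Your proposal is correct and follows essentially the same route as the paper: specialize Theorem \ref{k even r even} to $r=0$, collapse the Meijer $G$-function to $(2\pi)^{(k-1)/2}k^{-1/2}z^{\frac1k-1}e^{-kz^{1/k}}$, unfold $S_{k,0}(n)$ via $n=d^km$ to produce $\bar L_k$, and pair $j$ with $-j$. The only cosmetic difference is that you re-derive the $G$-function identity inline from the Gauss multiplication formula, whereas the paper invokes its Lemma \ref{MeijerG(k,0,0,k)} (with $b=-\frac{k-1}{k}$) — whose proof is exactly the computation you carry out.
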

This formula has been generalized by Dixit and the second author \cite[Theorem 1.5]{DM20} and further a two-variable generalization obtained by Dixit, Gupta,  Kumar,  and Maji  \cite[Theorem 2.12]{DGKM20}. 

%Next,  we shall mention another application of  Theorem \ref{k even r even}. 
%Plugging $k=r=2$ in Theorem \ref{k even r even}, we obtain a beautiful formula for $\zeta\left( -\frac{3}{2} \right)$. 
%\begin{corollary}\label{k2 and r2}
%%For any positiver real number $x$,  we have
%If $\alpha,\beta>0$ such that $\alpha\beta=4\pi^3$, then
%\begin{align*}
%& \sum_{n=1}^{\infty}\frac{1}{n^{4}}\frac{\mathrm{d}^2 }{\mathrm{d}\alpha^2}\left(\frac{1}{e^{n^{2}\alpha}-1}\right) = \frac{\sqrt{\beta}}{4 \pi}  \zeta\left(-\frac{3}{2}\right)+\frac{2 \pi^6}{945 \alpha^3} \\
%& - \frac{1}{4 \alpha} \sum_{m=1}^{\infty}  \frac{1}{m^2} \Re \left[  \frac{e^{\delta \sqrt{m\beta}}}{(e^{\delta \sqrt{m\beta}}-1)^2} + \frac{ \delta \sqrt{m\beta}  e^{\delta \sqrt{m\beta}}  \left(e^{\delta \sqrt{m\beta}}+1  \right)}{(e^{\delta \sqrt{m\beta}}-1)^3} \right],
%\end{align*}
%where   $\delta = 1-i$.

%\end{corollary}

\begin{theorem}\label{k even r odd}

Let $k \geq 2$ be an even integer and $r \neq -1$ be an odd integer.  Let $D_{k,r}(n)$ and $S_{k,r}(n)$ be defined as in \eqref{definition of D_{k,r}}.  Then for any $x>0$,  we have
 \begin{align*}
 \sum_{n=1}^{\infty} D_{k,r}(n) e^{-nx} & =-\frac{1}{2}\zeta(-r)+\frac{1}{k}\Gamma\left(\frac{1}{k}\right)\zeta\left(\frac{1}{k}-r\right)x^{-\frac{1}{k}}+R_{1+r} +\frac{(-1)^{\frac{2k+r-1}{2}} (2\pi)^{\frac{k+1-2r}{2}}}{x\,k^{\frac{2k-1}{2}}} 
 \\
  & \times  \sum_{j=-(k-1)}^{(k-1)} {}^{''} i^j \sum_{n=1}^{\infty} S_{k,r}(n) G_{0,k}^{\,k,0} \!\left(  \,\begin{matrix}\{\}\\r,-\frac{1}{k},\cdots ,-\frac{(k-1)}{k} \end{matrix} \; \Big|X(j)  \right),
  \end{align*}
 where $X(j)$ and $ R_{1+r}$ are defined as in \eqref{X(j)} and \eqref{R(1+r)} respectively. 
\end{theorem}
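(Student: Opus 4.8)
The plan is to run the same contour-integration argument that underlies Theorem \ref{k even r even}, watching carefully the one place where the parity of $r$ intervenes. I would begin from the Cahen--Mellin representation $e^{-y}=\frac{1}{2\pi i}\int_{(c)}\Gamma(s)y^{-s}\,\mathrm{d}s$ with $y=nx$; multiplying by $D_{k,r}(n)$, summing over $n$, and interchanging (valid for $c>\max(1/k,1+r)$ by absolute convergence together with \eqref{gen_D(k,r)}) gives
\begin{align*}
\sum_{n=1}^{\infty}D_{k,r}(n)e^{-nx}=\frac{1}{2\pi i}\int_{c-i\infty}^{c+i\infty}\Gamma(s)\,\zeta(ks)\,\zeta(s-r)\,x^{-s}\,\mathrm{d}s.
\end{align*}
This is exactly the integral advertised in the introduction and is the common starting point for both Theorem \ref{k even r even} and the present theorem.

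First I would move the line of integration to a line $\Re(s)=d$ lying to the left of all of $s=0$, $1/k$, $1+r$ and collect residues. The pole of $\zeta(ks)$ at $s=1/k$ contributes $\frac{1}{k}\Gamma(1/k)\zeta(1/k-r)x^{-1/k}$; the point $s=0$, where $\Gamma$ has a simple pole and $\zeta(ks)=\zeta(0)=-\tfrac12$, contributes $-\tfrac12\zeta(-r)$; and the pole of $\zeta(s-r)$ at $s=1+r$ contributes $R_{1+r}$. Here the two cases of \eqref{R(1+r)} appear naturally: for $r\geq 0$ this is a simple pole and yields $r!\,\zeta(k(1+r))x^{-(1+r)}$, while for $r<0$ (with $r\neq-1$, so that $1+r$ is a negative integer) the simple pole of $\Gamma$ and the simple pole of $\zeta(s-r)$ meet a simple trivial zero of $\zeta(ks)$, producing a net simple pole whose residue carries the factor $k\,\zeta'(k(1+r))$. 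The decisive structural fact, and the reason $k$ is taken even, is that at every remaining negative integer $s=-\ell$ the trivial zero $\zeta(-k\ell)=0$ cancels the pole of $\Gamma(s)$, so no further residues survive; the exclusion $r\neq-1$ is exactly what prevents the $s=0$ and $s=1+r$ poles from colliding.

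Next I would transform the remaining integral on $\Re(s)=d$ by applying the asymmetric functional equation $\zeta(w)=2^{w}\pi^{w-1}\sin(\pi w/2)\Gamma(1-w)\zeta(1-w)$ to \emph{both} $\zeta(ks)$ and $\zeta(s-r)$, and then substituting $s\mapsto 1-s$. The substitution turns $\zeta(1-ks)\zeta(1-s+r)$ into $\zeta(ks-k+1)\zeta(s+r)$, which is precisely the Dirichlet series attached to $S_{k,r}$ in \eqref{gen_S(k,r)}, and it manufactures the overall factor $x^{-1}$. Consolidating the gamma factors by the Gauss multiplication formula $\prod_{\ell=0}^{k-1}\Gamma(s-\ell/k)=(2\pi)^{(k-1)/2}k^{k-1/2-ks}\Gamma(ks-k+1)$ reorganizes the integrand around the kernel $\Gamma(s+r)\prod_{\ell=1}^{k-1}\Gamma(s-\ell/k)$ of the Meijer $G$-function in \eqref{MeijerG}; interchanging the resulting integral with $\sum_n S_{k,r}(n)n^{-s}$ then produces $\sum_n S_{k,r}(n)\,G_{0,k}^{\,k,0}(\,\cdot\,|\,X(j))$. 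The one genuinely $r$-sensitive step is the collapse of the trigonometric factor $\sin\!\big(\tfrac{\pi k(1-s)}{2}\big)\sin\!\big(\tfrac{\pi(1-s-r)}{2}\big)/\sin(\pi s)$: for even $r$ the inner sine reduces to $\pm\sin(\tfrac{\pi(1-s)}{2})$ and one meets $\sin(k\phi)/\cos\phi$, whereas for odd $r$ it reduces to $\pm\cos(\tfrac{\pi(1-s)}{2})$ and one instead meets $\sin(k\phi)/\sin\phi$ with $\phi=\tfrac{\pi(1-s)}{2}$. Expanding $\sin(k\phi)/\sin\phi=\sum_{j}{}^{''}e^{ij\phi}$ over the symmetric odd index set is what supplies the summation $\sum_{j}{}^{''}$, the weight $i^{j}$, and---after comparing with the even case through the identity $(-1)^{k}=1$---the sign $(-1)^{(2k+r-1)/2}$, while the powers of $2\pi$ and $k$ collect into the stated constant.

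The main obstacle is analytic rather than algebraic: justifying the leftward shift and the termwise use of the functional equation requires uniform control of $\Gamma(s)\zeta(ks)\zeta(s-r)$ on vertical lines (Stirling's formula together with convexity bounds for $\zeta$), and the interchange of $\sum_n$ with the contour integral on the line $\Re(s)=1-d>1$ must be justified by absolute convergence of $\sum_n S_{k,r}(n)n^{-s}$ there. One must also check that each $X(j)$ satisfies $|\arg X(j)|=\pi|j|/2\leq \pi(k-1)/2<\pi k/2$, so that the Meijer $G$-functions in the conclusion converge by the criterion in \eqref{MeijerG} and the series in $n$ converges by their exponential decay. The remaining work---pinning down every phase so that the constant is exactly $(-1)^{(2k+r-1)/2}(2\pi)^{(k+1-2r)/2}x^{-1}k^{-(2k-1)/2}$ and the weight is exactly $i^{j}$---is the careful but routine bookkeeping highlighted above.
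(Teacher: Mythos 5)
Your proposal is correct and follows essentially the same route as the paper: the Mellin representation \eqref{infinite sum_interms_line integral}, the contour shift with residues only at $s=0$, $s=1/k$, $s=1+r$ (the trivial zeros of $\zeta(ks)$ killing the remaining $\Gamma$-poles because $k$ is even, and the $r<0$ case of \eqref{R(1+r)} arising from the pole--pole--zero interplay you describe), the functional equation applied to both zeta factors followed by $s\mapsto 1-s$, Gauss multiplication, and the Meijer $G$ identification with the same convergence checks. The only cosmetic difference is that you expand $\sin(k\phi)/\sin\phi$ in the variable $\phi=\tfrac{\pi(1-s)}{2}$ so that $e^{ij\phi}=i^{j}e^{-ij\pi s/2}$ supplies the weights $i^{j}$ directly, whereas the paper works in $\pi s/2$ and invokes the equivalent identity \eqref{use in k even r odd}; the two differ only by the symmetric relabeling $j\mapsto -j$ and a sign absorbed into the stated constant.
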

Substituting $k=2$ and $r=1$ in the above result,  we derive an interesting identity for $\zeta\left(- \frac{1}{2} \right)$. 

\begin{corollary}\label{Formula for zeta(-1/2)}

For $\alpha,\beta>0$ such that $\alpha\beta=4 \pi^3$,
\begin{align*}
\sum_{n=1}^{\infty}\frac{1}{n^{2}}\frac{\mathrm{d}}{\mathrm{d\alpha}}\left(\frac{1}{1-e^{n^{2}\alpha}}\right)
&=\frac{1}{24}+\frac{\sqrt{\beta}}{4 \pi}\zeta\left( -\frac{1}{2} \right)+\frac{\pi^4}{90 \alpha^2}\\ 
& -\frac{\pi}{4 \alpha} \sum_{m=1}^{\infty}\frac{1}{m}\Bigg\{ \left( \frac{\cos(\sqrt{m\beta})+\sin(\sqrt{m\beta})-e^{-\sqrt{m\beta}}}{ \sqrt{m \beta} \left(\cosh(\sqrt{m\beta})-\cos(\sqrt{m\beta})  \right)}\right) \\
&+ \frac{ 2 \sin(\sqrt{m\beta})\sinh(\sqrt{m\beta})}{(\cos({m\beta})\cosh(\sqrt{m\beta})-1)^2+(\sin(\sqrt{m\beta})\sinh(\sqrt{m\beta}))^2}\Bigg\}. 
\end{align*}
\end{corollary}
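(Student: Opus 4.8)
The plan is to read off Corollary~\ref{Formula for zeta(-1/2)} as the special case $k=2$, $r=1$ of Theorem~\ref{k even r odd}, taken with $x=\alpha$, and then to make every term explicit. On the left-hand side, since $r=1\geq 0$, the Lambert-series identity \eqref{D(k,r) for r positive} yields
\begin{align*}
\sum_{n=1}^{\infty}D_{2,1}(n)e^{-n\alpha}
=\sum_{n=1}^{\infty}\frac{-1}{n^{2}}\frac{\mathrm{d}}{\mathrm{d}\alpha}\left(\frac{1}{e^{n^{2}\alpha}-1}\right)
=\sum_{n=1}^{\infty}\frac{1}{n^{2}}\frac{\mathrm{d}}{\mathrm{d}\alpha}\left(\frac{1}{1-e^{n^{2}\alpha}}\right),
\end{align*}
which is exactly the series on the left of the corollary. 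The three non-Meijer terms of Theorem~\ref{k even r odd} are elementary evaluations: $-\tfrac{1}{2}\zeta(-1)=\tfrac{1}{24}$; using $\Gamma(\tfrac12)=\sqrt{\pi}$ and $\alpha^{-1/2}=\sqrt{\beta}/(2\pi^{3/2})$ (from $\alpha\beta=4\pi^{3}$) the second term equals $\tfrac{\sqrt{\beta}}{4\pi}\zeta(-\tfrac12)$; and since $r\geq 0$ the term $R_{1+r}=R_{2}=\zeta(4)\alpha^{-2}=\tfrac{\pi^{4}}{90\alpha^{2}}$ by \eqref{R(1+r)}. These three contributions make up the first line of the corollary.

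The crux is the Meijer term. For $k=2$ the index $j$ in Theorem~\ref{k even r odd} runs only over $\{-1,1\}$, with $i^{j}=\pm i$, and the inner function is $G_{0,2}^{2,0}\!\left(\begin{smallmatrix}\{\}\\ 1,\,-1/2\end{smallmatrix}\,\big|\,z\right)$. I would evaluate this in closed form through the standard reduction of $G_{0,2}^{2,0}$ to a modified Bessel function, $2z^{(a+b)/2}K_{a-b}(2\sqrt{z})$ with $a=1$, $b=-1/2$, so that the relevant order is $K_{3/2}$. As half-integer Bessel functions are elementary, $K_{3/2}(w)=\sqrt{\pi/(2w)}\,e^{-w}(1+1/w)$, this gives the closed form $G_{0,2}^{2,0}(\,\cdots\mid z)=\sqrt{\pi}\,e^{-2\sqrt{z}}\bigl(1+\tfrac{1}{2\sqrt{z}}\bigr)$. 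Computing the scalar prefactor of Theorem~\ref{k even r odd} at $k=2$, $r=1$ collapses it to $\sqrt{\pi}/(2\alpha)$, and evaluating $X(j)$ from \eqref{X(j)} with $\alpha\beta=4\pi^{3}$ gives the conjugate pair $X(\pm1)=\mp\, i\beta n/2$.

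Next I would combine the two conjugate terms. Writing $w_{n}=e^{-2\sqrt{X(1)}}\bigl(1+\tfrac{1}{2\sqrt{X(1)}}\bigr)$ with $\sqrt{X(1)}=\tfrac{\sqrt{\beta n}}{2}(1-i)$, the conjugacy $G(X(-1))=\overline{G(X(1))}$ turns the bracket $i\,G(X(1))-i\,G(X(-1))$ into $-2\sqrt{\pi}\,\Im(w_{n})$, so the entire Meijer term becomes $-\tfrac{\pi}{\alpha}\sum_{n}S_{2,1}(n)\,\Im(w_{n})$, where a short calculation gives $\Im(w_{n})=e^{-\sqrt{\beta n}}\bigl[\sin\sqrt{\beta n}+\tfrac{1}{2\sqrt{\beta n}}(\cos\sqrt{\beta n}+\sin\sqrt{\beta n})\bigr]$. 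I would then unfold the divisor sum by the substitution $n=d^{2}m$, under which $S_{2,1}(n)=\sum_{d^{2}\mid n}d^{3}/n$ converts the single sum into $\sum_{m}\tfrac{1}{m}\sum_{d}[\,\cdots]$ with argument $d\sqrt{\beta m}$, and evaluate the inner $d$-series. With $a=\sqrt{\beta m}$ and $q=e^{-a+ia}$ one has the geometric evaluations $\sum_{d}e^{-da}\cos(da)$ and $\sum_{d}e^{-da}\sin(da)$ with common denominator $2(\cosh a-\cos a)$, while $\sum_{d}d\,e^{-da}\sin(da)=\Im\,q/(1-q)^{2}=\tfrac{\sin a\sinh a}{2(\cosh a-\cos a)^{2}}$. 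The elementary identity $(\cos a\cosh a-1)^{2}+(\sin a\sinh a)^{2}=(\cosh a-\cos a)^{2}$ matches the denominator appearing in the corollary, and assembling the two resulting pieces produces the common factor $-\tfrac{\pi}{4\alpha}$ together with the two stated fractions.

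The step I expect to be the main obstacle is this last one: justifying the interchange of summations needed to unfold $n=d^{2}m$, summing the $d$-series to the precise $\cosh$--$\cos$ form, and—more delicately—fixing the principal branch of the square root for the complex arguments $X(\pm1)$ so that $X(-1)=\overline{X(1)}$ and hence $G(X(-1))=\overline{G(X(1))}$ hold as used. The closed-form reduction of the Meijer $G$-function via $K_{3/2}$ is conceptually central but is a table identity; the genuine bookkeeping lies in the trigonometric-hyperbolic simplification of the re-summed series and in the branch tracking.
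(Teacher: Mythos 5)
Your proposal is correct and follows essentially the same route as the paper's proof: specialize Theorem \ref{k even r odd} at $k=2$, $r=1$, reduce the Meijer $G$-function to the elementary $K_{3/2}$ form via Lemma \ref{MeijerG(2,0,0,2)}, unfold $S_{2,1}(n)$ along $n=d^2m$, and resum the resulting geometric-type series in $d$. The only cosmetic difference is bookkeeping: the paper extracts real parts of the two pieces $A(\theta)$, $B(\theta)$ using Lemmas \ref{our lemma 1} and \ref{zeta(-1/2)_lemma} and leaves the second denominator in the form $(\cos\cosh-1)^2+(\sin\sinh)^2$, whereas you take imaginary parts directly and invoke the identity $(\cos a\cosh a-1)^2+(\sin a\sinh a)^2=(\cosh a-\cos a)^2$; the two presentations are equivalent.
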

%$\sum_{n=1}^{\infty}\frac{(-1)}{n^{2}}\frac{\mathrm{d}}{\mathrm{d\alpha}}\left(\frac{-1}{e^{n^{2}\alpha}-1}\right)$
%$\sum_{n=1}^{\infty} \frac{e^{n^2 \alpha}}{(e^{n^2\alpha}-1 )^2}$

Note that, Theorem \ref{k even r odd} does not hold for $r=-1$,  so we present the next result corresponding to $r=-1$.
%The next result is corresponding to $r=-1$. 
\begin{theorem}\label{special case k even r=-1 }
Let $k \geq 2$ be an even integer. Then for $x>0$, we have
 \begin{align*}
 \sum_{n=1}^{\infty} D_{k,-1}(n) e^{-nx}=& \frac{1}{2} \log\left( \frac{x}{(2\pi)^k} \right)+\frac{1}{k}\Gamma\left(\frac{1}{k}\right)\zeta\left(1+\frac{1}{k}\right)x^{-\frac{1}{k}} \\
 & + (-1)^k \sum_{j=-(k-1)}^{(k-1)} {}^{''} e^{ i \pi j} \sum_{m=1}^\infty \log \left[1- \exp\left(- e^{-\frac{i\pi j}{2k}} (2\pi)^{1+\frac{1}{k}} \left( \frac{m}{x} \right)^{\frac{1}{k} }  \right)  \right].
 \end{align*}
% where $\theta=e^{\frac{ij\pi}{2k}}(2\pi)^{\frac{k+1}{k}}x^{-\frac{1}{k}}$.
\end{theorem}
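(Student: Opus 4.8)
The plan is to realise the Lambert series as a Mellin--Barnes integral and move the contour to the left, just as for Theorems \ref{k even r even} and \ref{k even r odd}; the one genuinely new phenomenon is that for $r=-1$ the pole of $\zeta(s-r)=\zeta(s+1)$ and a pole of $\Gamma(s)$ collide. Using $e^{-y}=\frac{1}{2\pi i}\int_{(c)}\Gamma(s)y^{-s}\,\mathrm{d}s$ and the Dirichlet series \eqref{gen_D(k,r)} with $r=-1$, I would first record, for $c>1/k$,
\begin{equation*}
\sum_{n=1}^{\infty}D_{k,-1}(n)e^{-nx}=\frac{1}{2\pi i}\int_{c-i\infty}^{c+i\infty}\Gamma(s)\zeta(ks)\zeta(s+1)x^{-s}\,\mathrm{d}s ,
\end{equation*}
and then shift the line of integration to $\Re(s)=-\lambda$ with $\lambda>1$, the horizontal segments dropping out because $\Gamma(s)$ decays exponentially while $\zeta$ grows only polynomially.

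Collecting residues is the crux. The pole of $\zeta(ks)$ at $s=1/k$ contributes $\frac{1}{k}\Gamma(1/k)\zeta(1+1/k)x^{-1/k}$, the second term of the theorem. At each negative integer $s=-N$ the simple pole of $\Gamma(s)$ is killed by the trivial zero $\zeta(ks)=\zeta(-kN)=0$, valid exactly because $k$ is even, so no residues arise there. At $s=0$, however, $\Gamma(s)$ and $\zeta(s+1)$ both have simple poles, giving a \emph{double} pole; this is precisely why $r=-1$ escapes Theorem \ref{k even r odd}. From $\Gamma(s)\zeta(s+1)=s^{-2}+O(1)$ (the $s^{-1}$ term cancels) and $\zeta(ks)x^{-s}=-\tfrac12+s\bigl(k\zeta'(0)+\tfrac12\log x\bigr)+O(s^{2})$, the coefficient of $s^{-1}$ is $k\zeta'(0)+\tfrac12\log x=\tfrac12\log\bigl(x/(2\pi)^{k}\bigr)$ by $\zeta'(0)=-\tfrac12\log(2\pi)$, which is the first term of the theorem.

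It remains to evaluate $I:=\frac{1}{2\pi i}\int_{(-\lambda)}\Gamma(s)\zeta(ks)\zeta(s+1)x^{-s}\,\mathrm{d}s$. I would apply the functional equation $\zeta(w)=2^{w}\pi^{w-1}\sin(\pi w/2)\Gamma(1-w)\zeta(1-w)$ to both zeta factors. The special feature of $r=-1$ is that the gamma factor $\Gamma(-s)$ produced from $\zeta(s+1)$ pairs with the ambient $\Gamma(s)$ to give $\Gamma(s)\Gamma(-s)=-\pi/(s\sin\pi s)$; with $\sin\pi s=2\sin(\pi s/2)\cos(\pi s/2)$ the trigonometry collapses to $\sin(\pi ks/2)/\sin(\pi s/2)=\sum_{j}e^{ij\pi s/2}$, the sum over $j=-(k-1),-(k-3),\dots,(k-1)$ being the source of the primed $j$-sum. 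Expanding $\zeta(1-ks)\zeta(-s)=\sum_{n\ge1}S_{k,-1}(n)n^{s-1}$ via \eqref{gen_S(k,r)} (legitimate for $\Re(s)<-1$) and interchanging, one is left with
\begin{equation*}
I=-\sum_{j}\sum_{n=1}^{\infty}\frac{S_{k,-1}(n)}{n}\,\frac{1}{2\pi i}\int_{(-\lambda)}\frac{\Gamma(1-ks)}{s}\,w_{j}(n)^{s}\,\mathrm{d}s ,\qquad w_{j}(n)=\frac{(2\pi)^{k+1}e^{ij\pi/2}n}{x}.
\end{equation*}

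The inner integral is evaluated by summing residues to the right, at $s=0$ and $s=p/k$ ($p\ge1$); a short computation gives $\frac{1}{2\pi i}\int_{(-\lambda)}\frac{\Gamma(1-ks)}{s}w^{s}\,\mathrm{d}s=-e^{-w^{1/k}}$. Writing $S_{k,-1}(n)/n=\sum_{a^{k}\mid n}a^{-1}$, setting $n=a^{k}m$ so that $w_{j}(a^{k}m)^{1/k}=a\,(2\pi)^{1+1/k}e^{ij\pi/(2k)}(m/x)^{1/k}$, and summing through $\sum_{a\ge1}t^{a}/a=-\log(1-t)$ collapses the triple sum to $I=-\sum_{j}\sum_{m\ge1}\log\bigl(1-\exp(-e^{ij\pi/(2k)}(2\pi)^{1+1/k}(m/x)^{1/k})\bigr)$; since the $j$-sum is invariant under $j\mapsto-j$ and $(-1)^{k}e^{i\pi j}=-1$ for $k$ even and $j$ odd, this is exactly the stated series. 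I expect the two delicate points to be the double-pole residue at $s=0$ and the verification that $\Re\bigl(e^{ij\pi/(2k)}\bigr)=\cos(j\pi/(2k))>0$ for $|j|\le k-1$ (which uses $k\ge2$), the latter being what makes each logarithmic series converge and thereby justifies closing the contour and interchanging the sums.
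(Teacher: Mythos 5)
Your proposal is correct, and it reaches the stated identity by a route that differs in a meaningful way from the paper's. The skeleton is shared: the same Mellin--Barnes representation, the same shift to $\Re(s)=-\lambda$ with $\lambda>1$, the same pole structure (simple pole at $s=1/k$, the negative-integer poles of $\Gamma(s)$ killed by trivial zeros of $\zeta(ks)$ since $k$ is even), and the same double-pole computation at $s=0$ — your observation that $\Gamma(s)\zeta(s+1)=s^{-2}+O(1)$ is a slicker packaging of the paper's multiplication of four Laurent expansions, but it is the identical calculation. Where you genuinely diverge is in the left vertical integral. The paper simply specializes Theorem \ref{k even r odd} at $r=-1$ (substitution $s\mapsto 1-s$, multiplication and division by $\cos(\pi s/2)$, Euler reflection, the $\sin(ks)/\cos(s)$ expansion \eqref{use in k even r odd} with its $i^j$ factors, Gauss multiplication, and recognition as a Meijer $G$-function), and only then evaluates that $G$-function in closed form via Lemma \ref{MeijerG(k,0,0,k)} with $b=-1$. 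You instead exploit the special feature of $r=-1$ head-on: the gamma factor $\Gamma(-s)$ coming from $\zeta(s+1)$ pairs with the ambient $\Gamma(s)$ through $\Gamma(s)\Gamma(-s)=-\pi/(s\sin\pi s)$, the factor $\cos(\pi s/2)$ cancels against $\sin\pi s=2\sin(\pi s/2)\cos(\pi s/2)$, and Lemma \ref{use in k even r even} (the $\sin(ks)/\sin(s)$ version, without $i^j$) does the rest; the remaining kernel $\frac{1}{2\pi i}\int_{(-\lambda)}\Gamma(1-ks)s^{-1}w^{s}\,\mathrm{d}s=-e^{-w^{1/k}}$ you evaluate by a direct Cahen--Mellin-type residue sum at $s=0$ and $s=p/k$, which is in effect a self-contained re-proof of Lemma \ref{MeijerG(k,0,0,k)} in the case you need, valid under the same sector condition $|\arg w_j|\le(k-1)\pi/2<k\pi/2$ that you correctly isolate. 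The endgame ($S_{k,-1}(n)/n=\sum_{a^k\mid n}a^{-1}$, $n=a^km$, $\sum_a t^a/a=-\log(1-t)$) coincides with the paper's, and your reconciliation of signs via the $j\mapsto-j$ symmetry and $(-1)^ke^{i\pi j}=-1$ is exactly what makes your answer agree with the stated form. What each approach buys: the paper's derivation is uniform with its other theorems and reuses already-established machinery, while yours is more elementary and self-contained for this case, avoiding Meijer $G$-functions altogether — at the modest cost of having to justify the rightward contour closure and the sum--integral interchanges from scratch, which you flag appropriately.
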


Till now, we have stated identities when $k \geq 2$ is even.  From the next result onwards we dealt with  $k\geq 1$ odd.  It turns out that,  Ramanujan's formula for $\zeta(2m+1)$ can be derived as a special case of our result when $k$ is an odd integer.

\begin{theorem}\label{k odd r odd}
Let $k\geq 1$ be an odd integer and $r \neq -1$ be an odd integer.  Let $D_{k,r}(n)$ and $S_{k,r}(n)$ be defined as in \eqref{definition of D_{k,r}}.  Then for $x> 0$,  we have
\begin{align}\label{Rama_generalization}
 \sum_{n=1}^{\infty} D_{k,r}(n) e^{-nx}=&-\frac{1}{2}\zeta(-r)+\frac{1}{k}\Gamma\left(\frac{1}{k}\right)\zeta\left(\frac{1}{k}-r\right)x^{-\frac{1}{k}}+R_{1+r}+R+\frac{(-1)^{\frac{2k+r-1}{2}} (2\pi)^{\frac{k+1-2r}{2}}}{x\, k^{\frac{2k-1}{2}}} \nonumber \\
 & \times
  \sum_{j=-(k-1)}^{(k-1)} {}^{''} i^j \sum_{n=1}^{\infty} S_{k,r}(n)G_{0,k}^{\,k,0} \!\left(  \,\begin{matrix}\{\}\\r,-\frac{1}{k},\cdots ,-\frac{(k-1)}{k} \end{matrix} \; \Big| X(-j)\right),
 \end{align}
where $X(j)$ and $R_{1+r}$ are defined as in \eqref{X(j)} and \eqref{R(1+r)} respectively,  and
\begin{align*}
R =
  \begin{cases}
   0, & {\rm{ if}}\,\, r \geq 1,  \\
  \frac{(-1)^{ \frac{1+r}{2} }}{2} \sum_{i=0}^{\frac{-(1+r)}{2}}  \frac{(-1)^{i+1} B_{k(2i+1)+1}}{(2i+1)! (k(2i+1)+1)  }  \frac{  B_{-2i-1-r}  (2\pi)^{-r  }}{ (-(2i+1+r))!} \left( \frac{x}{2\pi} \right)^{2i+1}, & {\rm if} \,\,  r \leq -3. 
 \end{cases}
\end{align*}

%\begin{align*}
%R =
%  \begin{cases}
%   0, & \text{if  } r \geq 1,  \\
% \sum_{i=1}^{m+1} (-1)^{m+1}   2^{2m} \frac{(-1)^{i+1}B_{2i}B_{2m-2i+2}\pi^{2m-2i+2}}{(2i)!(2m-2i+2)!} \left(\frac{x}{2}\right)^{2i-1} &\text{if  } r \leq -3. 
% \end{cases}
%\end{align*}

\end{theorem}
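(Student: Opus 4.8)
The plan is to realize the left-hand side as a Mellin--Barnes integral, move the contour far to the left while collecting residues for the ``elementary'' terms, and then transform the shifted integral by the functional equation into the Meijer-$G$ series. First I would record the representation
\begin{align*}
\sum_{n=1}^{\infty} D_{k,r}(n) e^{-nx} = \frac{1}{2\pi i}\int_{(c)} \Gamma(s)\,\zeta(ks)\,\zeta(s-r)\, x^{-s}\, \mathrm{d}s, \qquad c > \max\left( \tfrac{1}{k},\, 1+r \right),
\end{align*}
which follows from $e^{-nx} = \frac{1}{2\pi i}\int_{(c)}\Gamma(s)(nx)^{-s}\,\mathrm{d}s$ together with the Dirichlet series \eqref{gen_D(k,r)}; the interchange of summation and integration is justified by absolute convergence on the line $\Re(s)=c$.

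Next I would shift the line of integration to $\Re(s)=\lambda$ with $\lambda<\min(0,r)$ (the abscissa below which the functional-equation transform in the last step converges) and account for every residue crossed. The poles come from the pole of $\zeta(ks)$ at $s=1/k$, the pole of $\zeta(s-r)$ at $s=1+r$, and the poles of $\Gamma(s)$ at the non-positive integers. Since $\zeta(ks)$ has residue $1/k$ at $s=1/k$, and $\Gamma$ has residue $1$ at $s=0$, one gets
\begin{align*}
\Res_{s=1/k}=\frac{1}{k}\Gamma\left(\tfrac1k\right)\zeta\left(\tfrac1k-r\right)x^{-1/k}, \qquad \Res_{s=0}=\zeta(0)\zeta(-r)=-\tfrac12\zeta(-r),
\end{align*}
which are the first two stated terms. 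At $s=1+r$ one obtains $R_{1+r}$ of \eqref{R(1+r)}: for $r\geq1$ this is the simple-pole value $r!\,\zeta(k(1+r))x^{-(1+r)}$, while for $r\leq -3$ the point $s=1+r$ is a negative even integer where $\Gamma(s)$ and $\zeta(s-r)$ both have simple poles but $\zeta(ks)$ has a \emph{trivial zero} (because $k(1+r)$ is a negative even integer); expanding these three factors to first order leaves a simple pole whose residue is exactly the $\zeta'$-form of $R_{1+r}$.

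The decisive parity feature appears at the poles $s=-n$ of $\Gamma(s)$, $n\geq1$. Here $\zeta(ks)=\zeta(-kn)$, and since $k$ is odd this vanishes precisely when $n$ is even, while for $n$ odd it equals $-B_{kn+1}/(kn+1)\neq0$. Thus only odd $n$ survive, and among those $\zeta(s-r)=\zeta(-n-r)$ is a nonzero (even-argument) value exactly when $n\leq -r$, i.e. only for $r\leq -3$. Writing $n=2i+1$ with $0\leq i\leq \tfrac{-(1+r)}{2}$, using $\Res_{s=-n}\Gamma(s)=(-1)^n/n!$, evaluating $\zeta(-n-r)$ by Euler's formula \eqref{Euler_zeta(2m)} and $\zeta(-kn)$ by the odd-argument formula, collects these residues into precisely the term $R$; for $r\geq1$ no such residue survives, giving $R=0$. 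This is exactly what distinguishes the present odd-$k$ theorem from Theorem \ref{k even r even}: when $k$ is even, $\zeta(-kn)=0$ for \emph{every} $n\geq1$, so no analogue of $R$ occurs.

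Finally I would transform the integral on $\Re(s)=\lambda$. Applying the asymmetric functional equation to both $\zeta(ks)$ and $\zeta(s-r)$ turns the product into $\chi(ks)\chi(s-r)\,\zeta(1-ks)\,\zeta(1+r-s)$ with $\chi(w)=2^{w}\pi^{w-1}\sin(\pi w/2)\Gamma(1-w)$, and by \eqref{gen_S(k,r)} with $w=1-s$ one has $\zeta(1-ks)\zeta(1+r-s)=\sum_{n=1}^{\infty}S_{k,r}(n)\,n^{s-1}$; interchanging and pulling out $1/n$ gives, for each $n$, an integral whose power part is a power of $\tfrac{(2\pi)^{k+1}n}{k^{k}x}$, the modulus of $X(j)$ in \eqref{X(j)}. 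Expanding $\Gamma(1-ks)$ by Gauss's multiplication formula produces the $k$ factors $\prod_{m=1}^{k}\Gamma(m/k-s)$, and combining the surviving $\Gamma(s)$ and the two sine factors through the reflection formula $\Gamma(z)\Gamma(1-z)=\pi/\sin(\pi z)$ (together with the substitution $s\mapsto -s$) matches the numerator $\Gamma(s+r)\prod_{j=1}^{k-1}\Gamma(s-j/k)$ of $G_{0,k}^{k,0}$ with bottom parameters $r,-\tfrac1k,\dots,-\tfrac{k-1}{k}$ in \eqref{MeijerG}. The trigonometric factors, written as exponentials $e^{\pm i\pi ks/2}e^{\pm i\pi s/2}$, rotate the Meijer-$G$ argument by purely imaginary phases (since $k$ is odd, $e^{\pm i\pi k/2}=\pm i$), and after folding the rotated arguments back into the sector $|\arg z|<\tfrac{k\pi}{2}$ in which the integral \eqref{MeijerG} converges, the contributions reorganize into the stated sum over $j=-(k-1),\dots,(k-1)$ with weights $i^{j}$ and arguments $X(-j)$; the $s$-independent constants assemble into the prefactor $\frac{(-1)^{(2k+r-1)/2}(2\pi)^{(k+1-2r)/2}}{x\,k^{(2k-1)/2}}$.

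I expect the main obstacle to be twofold. First, rigorously justifying the contour shift: one must bound $\Gamma(s)\zeta(ks)\zeta(s-r)$ on the horizontal segments as $|\Im(s)|\to\infty$ using Stirling's estimate for $\Gamma$ and the polynomial growth of $\zeta$ in vertical strips, so that the horizontal contributions vanish and all interchanges of summation and integration converge. Second, and more delicate, is the exact bookkeeping of the constants, signs, and phases in the functional-equation step --- in particular, correctly tracking how the imaginary phases (from $k$ odd) together with the convergence sector of the Meijer-$G$ fold into the weight $i^{j}$ and the argument $X(-j)$, and verifying that each $X(-j)$ indeed lies in the admissible sector $|\arg z|<(m+n-\tfrac{p+q}{2})\pi$ of \eqref{MeijerG}. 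The residue coincidence at $s=1+r$ for $r\leq -3$ and the survival of the odd-integer residues for odd $k$ are the conceptually subtle points, but these have been pinned down above.
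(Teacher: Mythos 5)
Your proposal is correct and takes essentially the same route as the paper's proof: the same Mellin--Barnes representation, the same contour shift with the same parity analysis of the poles (the surviving residues at $s=-1,-3,\dots,r$ giving $R$, and the simple pole at $s=1+r$ surviving because of the trivial zero of $\zeta(ks)$, giving the $\zeta'$-form of $R_{1+r}$), followed by the same functional-equation, multiplication-formula, and Meijer-$G$ reduction of the shifted integral. The only cosmetic differences are that the paper first substitutes $s\mapsto 1-s$ so as to work on the line $\Re(s)=1+\lambda$, and it packages your ``folding of rotated arguments'' step as the explicit exponential-sum identity for $\cos(k\pi s/2)/\cos(\pi s/2)$ in Lemma \ref{use in k odd r odd}, which is what produces the weights $i^j$ and the arguments $X(-j)$.
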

The next result provides a relation between two odd zeta values, namely, $\zeta(2m+1)$ and $\zeta(2km+1)$,  and a zeta value at rational argument $\zeta \left( \frac{1}{k}+2m+1 \right)$. 
\begin{corollary}\label{k odd r -ve odd}
Let $k\geq 1$ be an odd integer and $m$ be a positive integer.  For any $x>0$,  we have
\begin{align*}
\sum_{n=1}^{\infty} D_{k,-(2m+1)}(n) e^{-nx} & =- \frac{1}{2}\zeta(2m+1) + \frac{1}{k}\Gamma\left(\frac{1}{k}\right)\zeta\left(\frac{1}{k}+2m+1\right)x^{-\frac{1}{k}} \nonumber \\
& +  \frac{ (-1)^m}{2} \frac{ k (2km)! \zeta(2km+1) }{ (2m)!} \left( \frac{x}{(2\pi)^k} \right)^{2m} \nonumber \\ 
& + \frac{(-1)^m (2\pi)^{2m+1}}{2} \sum_{i=0}^m \frac{(-1)^{i+1} B_{k(2i+1)+1} B_{2m-2i} }{(2i+1)! (k(2i+1)+1) (2m-2i)!} \left( \frac{x}{2\pi} \right)^{2i+1} \nonumber \\
& + \frac{(-1)^m (2\pi)^{\frac{k+1}{2}+2m+1 }}{x \, k^{k-\frac{1}{2}}} \sum_{j=-(k-1)}^{(k-1)} {}^{''} i^j \sum_{n=1}^{\infty} S_{k,-(2m+1)}(n) \nonumber \\
& \qquad \qquad \qquad \qquad \times G_{0,k}^{\,k,0} \!\left(  \,\begin{matrix}\{\}\\ -(2m+1),-\frac{1}{k},\cdots ,-\frac{(k-1)}{k} \end{matrix} \; \Big| X(-j)\right).
\end{align*}
\end{corollary}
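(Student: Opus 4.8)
The plan is to specialize Theorem \ref{k odd r odd} to $r=-(2m+1)$ and then recast the resulting constants into the shape asserted in the statement. First I would observe that for a positive integer $m$ the number $r=-(2m+1)$ is an odd integer with $r\leq -3$; in particular $r\neq -1$, so Theorem \ref{k odd r odd} applies with no extra hypotheses. Under this substitution $-r=2m+1$, so the terms $-\tfrac12\zeta(-r)$ and $\tfrac1k\Gamma\!\left(\tfrac1k\right)\zeta\!\left(\tfrac1k-r\right)x^{-1/k}$ become $-\tfrac12\zeta(2m+1)$ and $\tfrac1k\Gamma\!\left(\tfrac1k\right)\zeta\!\left(\tfrac1k+2m+1\right)x^{-1/k}$, reproducing the first two terms of the corollary.

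The one genuinely nontrivial simplification is the term $R_{1+r}$. Since $1+r=-2m<0$, the second branch of \eqref{R(1+r)} is in force, and using $-(1+r)=2m$ together with $(-1)^{1+r}=1$ it reads $R_{1+r}=\frac{k}{(2m)!}\,\zeta'(-2km)\,x^{2m}$. The crux is to evaluate $\zeta'$ at the trivial zero $-2km$. Differentiating the functional equation $\zeta(s)=2^{s}\pi^{s-1}\sin(\pi s/2)\,\Gamma(1-s)\zeta(1-s)$ at $s=-2km$, where the vanishing of $\sin(\pi s/2)$ kills all contributions except the one coming from its own derivative, yields the classical formula $\zeta'(-2n)=\frac{(-1)^{n}(2n)!}{2(2\pi)^{2n}}\zeta(2n+1)$. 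Taking $n=km$ and invoking that $k$ is odd, so that $(-1)^{km}=(-1)^m$, turns $R_{1+r}$ precisely into $\frac{(-1)^m}{2}\frac{k(2km)!\,\zeta(2km+1)}{(2m)!}\left(\frac{x}{(2\pi)^k}\right)^{2m}$, which is the third term of the corollary.

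Next I would expand the finite polynomial term $R$. Because $r=-(2m+1)\leq -3$, the second case in the definition of $R$ applies. Substituting $r=-(2m+1)$ gives $\tfrac{1+r}{2}=-m$ and $\tfrac{-(1+r)}{2}=m$, so the index runs over $0\leq i\leq m$ and $(-1)^{(1+r)/2}=(-1)^m$; moreover $B_{-2i-1-r}=B_{2m-2i}$, $(2\pi)^{-r}=(2\pi)^{2m+1}$, and $(-(2i+1+r))!=(2m-2i)!$. Collecting these identifications produces the fourth term verbatim.

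It remains to match the Meijer $G$-term, which is purely a matter of simplifying its prefactor. With $r=-(2m+1)$ and $k$ odd, the sign exponent becomes $\tfrac{2k+r-1}{2}=k-m-1$, whence $(-1)^{k-m-1}=(-1)^m$; the power of $2\pi$ becomes $\tfrac{k+1-2r}{2}=\tfrac{k+1}{2}+2m+1$; and $k^{(2k-1)/2}=k^{k-1/2}$. The divisor function and the $G$-function parameters specialize to $S_{k,-(2m+1)}(n)$ and $-(2m+1),-\tfrac1k,\dots,-\tfrac{k-1}{k}$, with argument $X(-j)$ as before, so every summand aligns. The only real obstacle in the whole argument is the closed-form evaluation of $\zeta'(-2km)$ from the functional equation and the disciplined tracking of the various signs, both of which are handled cleanly using the parity of $k$.
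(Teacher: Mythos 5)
Your proposal is correct and follows essentially the same route as the paper: the paper's proof likewise sets $r=-(2m+1)$ in Theorem \ref{k odd r odd} and invokes the identity $\zeta'(-2km)=\frac{(-1)^{km}}{2}\frac{(2km)!\,\zeta(2km+1)}{(2\pi)^{2km}}$, which (using $(-1)^{km}=(-1)^m$ for $k$ odd) is exactly your treatment of $R_{1+r}$. Your write-up simply supplies the details the paper leaves implicit, including the derivation of the $\zeta'(-2n)$ formula from the functional equation and the bookkeeping for $R$ and the Meijer $G$-prefactor, all of which check out.
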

Substituting $k=1$ in Corollary \ref{k odd r -ve odd},  and simplifying, we derive Ramanujan's formula
for odd zeta values. 
\begin{corollary}\label{Ramanujan_odd zeta}
Let $m$ be a positive integer.  For any $x>0$,  
\begin{align}\label{Rama_odd zeta}
 \sum_{n=1}^{\infty} \sigma_{-(2m+1)}(n) e^{-nx} + \frac{1}{2} \zeta(2m+1) & =
    (-1)^m\left( \frac{x}{2\pi} \right)^{2m}  \left( \frac{ \zeta(2m+1)}{2}  +  \sum_{n=1}^{\infty}  \sigma_{-(2m+1)}(n)  e^{-\frac{4 \pi^2 n}{x}} \right) \nonumber \\ 
& + \frac{1}{2} \sum_{i=0}^{m+1} \frac{(-1)^{i+1} B_{2i} B_{2m+2-2i} x^{2m+1} }{(2i)! (2m+2-2i)!} \left( \frac{2 \pi}{x} \right)^{2i}.
\end{align}
\end{corollary}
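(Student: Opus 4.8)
The plan is to specialize Corollary~\ref{k odd r -ve odd} to $k=1$ and simplify each term. Setting $k=1$ and using \eqref{relation_D_sigma}, the divisor functions collapse to the classical ones: $D_{1,-(2m+1)}(n)=\sigma_{-(2m+1)}(n)$ and $S_{1,-(2m+1)}(n)=\sigma_{2m+1}(n)$. Moving the term $-\tfrac12\zeta(2m+1)$ to the left-hand side immediately produces the left-hand side of \eqref{Rama_odd zeta}, while the term $\tfrac{(-1)^m}{2}\tfrac{k(2km)!\zeta(2km+1)}{(2m)!}\left(x/(2\pi)^k\right)^{2m}$ becomes $(-1)^m\left(x/2\pi\right)^{2m}\tfrac{\zeta(2m+1)}{2}$, which is the first half of the leading term on the right of \eqref{Rama_odd zeta}.

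Next I would treat the Meijer $G$-sum. Since $k=1$, the restricted sum over $j$ from $-(k-1)$ to $(k-1)$ degenerates to the single term $j=0$, so $i^j=1$, and by \eqref{X(j)} the argument becomes $X(0)=4\pi^2 n/x$. The key simplification is that the relevant $G$-function reduces to an elementary expression, $G_{0,1}^{\,1,0}\!\left(\begin{smallmatrix}\{\}\\ -(2m+1)\end{smallmatrix}\,\Big|\,z\right)=z^{-(2m+1)}e^{-z}$, which follows directly from the defining integral \eqref{MeijerG} together with the Mellin representation of $e^{-z}$. Substituting $z=4\pi^2 n/x$ and using the elementary identity $\sigma_{2m+1}(n)/n^{2m+1}=\sigma_{-(2m+1)}(n)$, the entire $G$-term collapses to $(-1)^m\left(x/2\pi\right)^{2m}\sum_{n\ge1}\sigma_{-(2m+1)}(n)e^{-4\pi^2 n/x}$, which completes the leading bracket on the right of \eqref{Rama_odd zeta}.

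It remains to match the finite Bernoulli sum. With $k=1$ the summand of the $R$-sum in Corollary~\ref{k odd r -ve odd} simplifies via $B_{k(2i+1)+1}=B_{2i+2}$ and $(2i+1)!\,(k(2i+1)+1)=(2i+2)!$; after the reindexing $i\mapsto m-i$ this term matches, entry by entry, the terms $i=0,\dots,m$ of the double Bernoulli sum in \eqref{Rama_odd zeta}. The single remaining boundary term, namely the $i=m+1$ term of that sum, equals $\tfrac12(-1)^m\tfrac{B_{2m+2}}{(2m+2)!}(2\pi)^{2m+2}x^{-1}$, and by Euler's formula \eqref{Euler_zeta(2m)} this is exactly $\zeta(2m+2)/x$. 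Since the second term of Corollary~\ref{k odd r -ve odd}, namely $\tfrac1k\Gamma(1/k)\zeta(1/k+2m+1)x^{-1/k}$, becomes precisely $\zeta(2m+2)/x$ at $k=1$, these two contributions combine to supply exactly the missing boundary term, and the two Bernoulli sums agree.

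The computation is essentially mechanical, so I expect no deep obstacle; the one point genuinely requiring care is the last step, where one must recognize that the lone term $\zeta(2m+2)/x$ arising from the $\Gamma(1/k)\zeta(1/k+2m+1)$ factor is nothing but the $i=m+1$ boundary contribution of the symmetric double Bernoulli sum in disguise, via Euler's formula for $\zeta(2m+2)$. Tracking the sign $(-1)^{i}$ against $(-1)^{m-i}$ under the reindexing, which coincide because they differ by an even power, is the other spot where a sign error could easily creep in.
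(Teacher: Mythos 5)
Your proposal is correct and follows essentially the same route as the paper: specialize Corollary~\ref{k odd r -ve odd} at $k=1$, collapse the divisor functions via \eqref{relation_D_sigma}, evaluate the Meijer $G$-function through \eqref{MeijerG(1,0,0,1)}, and absorb the $\zeta(2m+2)/x$ term into the Bernoulli sum via Euler's formula \eqref{Euler_zeta(2m)}. The only difference is presentational: you spell out the reindexing $i\mapsto m-i$ and the identification of the $i=m+1$ boundary term explicitly, where the paper compresses this into ``upon simplification.''
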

One can easily show that \eqref{Rama_odd zeta} is equivalent to \eqref{Ramanujan's formula} under the substitution $x=2\alpha$ and $\alpha \beta= \pi^2$. 
Again,  letting $k=1$ and $r=2m+1$,  for $m\geq 1$,  in Theorem \ref{k odd r odd},  one can obtain the following interesting identity. 
\begin{corollary}\label{special cases when m less than -1}
Let $m$ be a positive integer.  For $\alpha,\beta>0$ with $\alpha \beta=\pi^2$,  we have
\begin{align}\label{Rama_k=1_ r=2m+1}
\alpha^{m+1} \sum_{n=1}^{\infty} \frac{n^{2m+1}}{e^{2n\alpha}-1}-(-\beta)^{m+1}  \sum_{n=1}^{\infty} \frac{n^{2m+1}}{e^{2n\beta}-1}=(\alpha^{m+1} -(-\beta)^{m+1}) \frac{B_{2m+2}}{4m+4}.
\end{align}
\end{corollary}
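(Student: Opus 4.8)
The plan is to specialize Theorem \ref{k odd r odd} to $k=1$ and $r=2m+1$. Since $m\geq 1$, the parameter $r=2m+1$ is a positive odd integer different from $-1$, so the theorem applies; moreover $r\geq 1$ forces the correction term $R$ to vanish, and $R_{1+r}=(2m+1)!\,\zeta(2m+2)\,x^{-(2m+2)}$. By \eqref{relation_D_sigma} the left-hand side becomes $\sum_{n=1}^\infty \sigma_{2m+1}(n)e^{-nx}=\sum_{n=1}^\infty n^{2m+1}/(e^{nx}-1)$, using the Lambert-series identity recorded in the preliminaries, and the divisor function on the right is $S_{1,2m+1}(n)=\sigma_{-(2m+1)}(n)$.

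First I would simplify the Meijer-$G$ piece. For $k=1$ the $j$-sum in \eqref{Rama_generalization} collapses to the single index $j=0$, for which $i^{j}=1$ and $X(0)=4\pi^2 n/x$, and the order-$(1,0)$ $G$-function degenerates to an elementary exponential, $G_{0,1}^{\,1,0}\!\left(\begin{matrix}\{\}\\ r\end{matrix}\,\Big|\,z\right)=z^{r}e^{-z}$, which follows directly from the defining integral \eqref{MeijerG} by summing the residues along the single pole chain of $\Gamma(r+s)$. Substituting and using $\sigma_{-(2m+1)}(n)\,n^{2m+1}=\sigma_{2m+1}(n)$ turns this term into a constant multiple of $\sum_{n=1}^\infty \sigma_{2m+1}(n)e^{-4\pi^2 n/x}=\sum_{n=1}^\infty n^{2m+1}/(e^{4\pi^2 n/x}-1)$.

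Next I would evaluate the remaining constants. The fractional-argument term $\tfrac1k\Gamma(1/k)\zeta(1/k-r)x^{-1/k}$ reduces to $\zeta(-2m)\,x^{-1}$, which vanishes because $-2m$ is a trivial zero of $\zeta$ for $m\geq 1$. The term $-\tfrac12\zeta(-(2m+1))$ equals $B_{2m+2}/(4m+4)$ via $\zeta(-(2m+1))=-B_{2m+2}/(2m+2)$, and $R_{1+r}$ is rewritten with Euler's formula \eqref{Euler_zeta(2m)} (taken at index $m+1$) as $(-1)^m (2\pi)^{2m+2}B_{2m+2}\,x^{-(2m+2)}/(4m+4)$. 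At this stage the identity reads
\begin{align*}
\sum_{n=1}^\infty \frac{n^{2m+1}}{e^{nx}-1}
&=\frac{B_{2m+2}}{4m+4}+(-1)^m\frac{(2\pi)^{2m+2}B_{2m+2}}{4m+4}\,x^{-(2m+2)}\\
&\quad+(-1)^{m+1}(2\pi)^{2m+2}x^{-(2m+2)}\sum_{n=1}^\infty\frac{n^{2m+1}}{e^{4\pi^2 n/x}-1}.
\end{align*}

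Finally I would put $x=2\alpha$ with $\alpha\beta=\pi^2$, so that $4\pi^2/x=2\beta$ and $(2\pi)^{2m+2}x^{-(2m+2)}=(\beta/\alpha)^{m+1}$, then multiply through by $\alpha^{m+1}$ and collect terms; using $(-1)^{m+1}\beta^{m+1}=(-\beta)^{m+1}$ the two series land on opposite sides and the Bernoulli contributions combine into $(\alpha^{m+1}-(-\beta)^{m+1})B_{2m+2}/(4m+4)$, which is the claimed identity. The only genuinely delicate point is the bookkeeping of the powers of $2\pi$, $x$, $\alpha$, $\beta$ together with the signs coming from Euler's formula and from $(-1)^{m+1}$; everything else is a routine specialization, with the degeneration of the Meijer-$G$ function to $z^{r}e^{-z}$ being the single structural simplification that makes this case elementary.
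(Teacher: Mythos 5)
Your proposal is correct and follows essentially the same route as the paper: specialize Theorem \ref{k odd r odd} at $k=1$, $r=2m+1$ (so $R=0$ and the $j$-sum collapses to $j=0$), use the trivial zero $\zeta(-2m)=0$, the value $\zeta(-(2m+1))=-B_{2m+2}/(2m+2)$, Euler's formula for $\zeta(2m+2)$, and the degeneration \eqref{MeijerG(1,0,0,1)} of the Meijer $G$-function, then substitute $x=2\alpha$, $\alpha\beta=\pi^2$. Your intermediate transformation formula and the final bookkeeping of signs and powers match the paper's computation exactly.
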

This formula is a special case of \eqref{Ramanujan's formula} and it can be found in \cite[Vol.~1, p.~259]{ramnote}.

%Setting $k=r=1$ in Theorem \ref{k odd r odd}, one can show that
%\begin{corollary}\label{special case k=r=1}
% For $\alpha,\beta>0$ with $\alpha \beta=\pi^2$
%\begin{align*}
%\alpha \sum_{n=1}^{\infty} \frac{n}{e^{2n\alpha}-1}+\beta \sum_{n=1}^{\infty} \frac{n}{e^{2n\beta}-1}=\frac{\alpha+\beta}{24}-\frac{1}{4\alpha}
%\end{align*}
%%when  $\alpha=\beta=\pi$
%%$$\sum_{n=1}^{\infty} \frac{n}{e^{2n\alpha}-1}=\frac{1}{24}-\frac{1}{8\pi}$$
%\end{corollary}

Note that Theorem {\rm \ref{k odd r odd}} is not valid for $r=-1$.  Corresponding to $r=-1$ and $k\geq 1$ odd,  we obtain the next result.
%since corresponding to $r=-1$ we will have a double pole at $s=0$.  

\begin{theorem}\label{k odd r=-1}
Let $k \geq 1$ be an odd integer. Then for $x>0$, we have
 \begin{align}
 \sum_{n=1}^{\infty} D_{k,-1}(n) e^{-nx}=& \frac{1}{2} \log\left( \frac{x}{(2\pi)^k} \right)+\frac{1}{k}\Gamma\left(\frac{1}{k}\right)\zeta\left(1+\frac{1}{k}\right)x^{-\frac{1}{k}} + \frac{\zeta(-k)x}{2} \nonumber\\
 & - \sum_{j=-(k-1)}^{(k-1)} {}^{''}  \sum_{m=1}^\infty \log \left[1- \exp\left(- e^{\frac{i\pi j}{2k}} (2\pi)^{1+\frac{1}{k}} \left( \frac{m}{x} \right)^{\frac{1}{k} }  \right)  \right]. \label{gen_dekekind}
 \end{align}
\end{theorem}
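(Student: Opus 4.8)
The plan is to run the same Mellin--Barnes machinery that underlies Theorem~\ref{k odd r odd}, while isolating the single feature that makes $r=-1$ genuinely exceptional. Using the elementary transform $e^{-y}=\frac{1}{2\pi i}\int_{(c)}\Gamma(s)y^{-s}\,\mathrm{d}s$ together with the Dirichlet series \eqref{gen_D(k,r)} specialized to $r=-1$, I would first record, for $c>1/k$,
\begin{equation*}
\sum_{n=1}^{\infty}D_{k,-1}(n)e^{-nx}=\frac{1}{2\pi i}\int_{c-i\infty}^{c+i\infty}\Gamma(s)\zeta(ks)\zeta(s+1)\,x^{-s}\,\mathrm{d}s,
\end{equation*}
the interchange of summation and integration being justified by absolute convergence on $\Re(s)=c$. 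The essential observation is that when $r=-1$ the pole of $\zeta(s-r)=\zeta(s+1)$ sits at $s=0$, exactly where $\Gamma(s)$ is also singular. This collision is the source of the logarithm and is precisely why the formula of Theorem~\ref{k odd r odd}, which treats these two poles as \emph{distinct} simple poles, breaks down at $r=-1$.

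Next I would move the line of integration to $\Re(s)=c'$ with $-2<c'<-1$, collecting residues. There are exactly three contributing poles. At $s=1/k$ the simple pole of $\zeta(ks)$, with residue $1/k$ in the variable $s$, yields $\frac{1}{k}\Gamma(1/k)\zeta(1+1/k)x^{-1/k}$. At $s=0$ there is a \emph{double} pole; expanding $\Gamma(s)=\frac{1}{s}-\gamma+\cdots$, $\zeta(s+1)=\frac{1}{s}+\gamma+\cdots$, $\zeta(ks)=-\tfrac12+k\zeta'(0)s+\cdots$, and $x^{-s}=1-s\log x+\cdots$, the coefficient of $1/s$ equals $k\zeta'(0)+\tfrac12\log x$, which by $\zeta'(0)=-\tfrac12\log(2\pi)$ is precisely $\tfrac12\log\!\big(x/(2\pi)^k\big)$. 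At $s=-1$ the simple pole of $\Gamma(s)$ (residue $-1$) gives $\tfrac12\zeta(-k)x$. For every $n\ge 2$ the pole of $\Gamma$ at $s=-n$ is annihilated: if $n$ is even then $\zeta(-kn)=0$, while if $n\ge3$ is odd then $\zeta(1-n)=0$, both using that $k$ is odd. This same mechanism explains why the even-$k$ statements lack the term $\tfrac12\zeta(-k)x$, since $\zeta(-k)=0$ when $k$ is even. Thus only the three stated residue terms survive, and the problem is reduced to identifying the shifted integral $J=\frac{1}{2\pi i}\int_{(c')}\Gamma(s)\zeta(ks)\zeta(s+1)x^{-s}\,\mathrm{d}s$ with the logarithmic double series.

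For the integral $J$ I would apply the functional equation of $\zeta$ to both $\zeta(ks)$ and $\zeta(s+1)$ and then substitute $w=1-s$; exactly as in the proof of Theorem~\ref{k odd r odd}, this substitution converts the product of the reflected zeta factors into $\zeta(1+kw-k)\zeta(w-1)=\sum_{n=1}^{\infty}S_{k,-1}(n)n^{-w}$ by \eqref{gen_S(k,r)}. Expanding this Dirichlet series and reading the $w$-integral for each fixed $n$ as a Mellin--Barnes integral of the form \eqref{MeijerG} produces the Meijer $G$-function $G_{0,k}^{k,0}$ with bottom parameters $-1,-1/k,\dots,-(k-1)/k$ evaluated at $X(-j)$, the finite sum over $j\in\{-(k-1),\dots,k-1\}$ together with the rotations $e^{i\pi j/(2k)}$ arising from the sine factors of the functional equations combined with the $k$-fold Gauss multiplication structure of the kernel. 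The decisive simplification special to $r=-1$ is that the top-shifted gamma factor collapses, $\Gamma(s-1)/\Gamma(s)=1/(s-1)$, so this $G$-function degenerates to an elementary exponential; resumming the resulting series over $n$ through its $\sum_{d^k\mid n}$ structure and invoking $\sum_{\ell\ge1}\ell^{-1}e^{-\ell t}=-\log(1-e^{-t})$ then yields exactly $-\sum_{j}{}''\sum_{m\ge1}\log\!\big[1-\exp(-e^{i\pi j/(2k)}(2\pi)^{1+1/k}(m/x)^{1/k})\big]$, which completes the identification.

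The main obstacle I anticipate is twofold, and both difficulties lie away from the routine algebra. First, justifying the contour shift to $\Re(s)=c'$ requires uniform decay of $\Gamma(s)\zeta(ks)\zeta(s+1)$ along the horizontal segments as $|\Im(s)|\to\infty$; this follows from Stirling's estimate for $\Gamma$ against the polynomial growth of $\zeta$ in vertical strips, but must be verified so that the horizontal contributions genuinely vanish. Second, and more conceptually, the degeneration of the Meijer $G$-function at $r=-1$, together with the careful bookkeeping of the double pole at $s=0$ (ensuring that $\tfrac12\log x$ and $-\tfrac{k}{2}\log(2\pi)$ combine into $\tfrac12\log(x/(2\pi)^k)$), is exactly the place where the generic argument of Theorem~\ref{k odd r odd} fails. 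These two steps therefore have to be carried out directly rather than inherited, and I expect the precise tracking of the constants in the final resummation to the logarithm series to be the most delicate bookkeeping.
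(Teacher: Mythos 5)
Your proposal is correct and follows essentially the same route as the paper's proof: the same Mellin--Barnes representation, the same contour shift collecting the double pole at $s=0$ (yielding $\tfrac{1}{2}\log\left(x/(2\pi)^k\right)$ via the Laurent expansions), the simple poles at $s=1/k$ and $s=-1$, and the same functional-equation/Dirichlet-series treatment of the shifted vertical integral, ending with the resummation $\sum_{d\geq 1} d^{-1}e^{-dt}=-\log(1-e^{-t})$. The only cosmetic difference is that you collapse $\Gamma(s-1)/\Gamma(s)=1/(s-1)$ directly to reduce the kernel to an elementary exponential, whereas the paper routes through the multiplication formula and Lemma \ref{MeijerG(k,0,0,k)} with $b=-1$; these amount to the same computation.
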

Plugging $k=1$ in the above identity,  we obtain an interesting identity. 

\begin{corollary}\label{formula for logarithm Dedekind eta function}
For $\alpha ,\beta >0$ and $\alpha\beta=\pi^2 $,
\begin{equation}\label{Dedekind}
\sum_{n=1}^{\infty}\frac{1}{n(e^{2n\alpha}-1)}-\sum_{n=1}^{\infty}\frac{1}{n(e^{2n\beta}-1)}=\frac{\beta-\alpha}{12}-\frac{1}{4}\log\left(\frac{\alpha}{\beta}\right).
\end{equation}
\begin{remark}
This identity is popularly known as the transformation formula for the logarithm of the Dedekind eta function $\eta(z)$,  defined as $e^{\frac{\pi i z}{12}} \prod_{n=1}^\infty (1- e^{2n \pi i z})$,  which is an important example of a half-integral weight modular form.  Ramanujan recorded this identity twice in his second notebook \cite[Ch. 14, Sec. 8, Cor. (ii) and Ch. 16, Entry 27(iii)]{ramnote}.  For more information,  one can see \cite[p.~256]{bcbramsecnote}, \cite[p.~43]{bcbramthinote}, \cite[p.~320, Formula (29)]{lnb}. 
\end{remark}
\end{corollary}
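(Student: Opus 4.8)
The plan is to derive \eqref{Dedekind} as the special case $k=1$ of Theorem \ref{k odd r=-1}, so that all of the analytic work is already contained in that theorem and only elementary simplifications remain.

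First I would set $k=1$ throughout that theorem. On the left-hand side, \eqref{relation_D_sigma} gives $D_{1,-1}(n)=\sigma_{-1}(n)$, and the routine rearrangement
\begin{align*}
\sum_{n=1}^{\infty}\sigma_{-1}(n)e^{-nx}=\sum_{d=1}^{\infty}\frac{1}{d}\sum_{m=1}^{\infty}e^{-dmx}=\sum_{d=1}^{\infty}\frac{1}{d\left(e^{dx}-1\right)}
\end{align*}
identifies the left-hand side with $\sum_{n=1}^{\infty}\frac{1}{n(e^{nx}-1)}$. On the right-hand side the restricted sum over $j$ collapses to its single $j=0$ term because $k-1=0$, and the three elementary pieces reduce to $\tfrac12\log\!\left(x/2\pi\right)$, to $\Gamma(1)\zeta(2)x^{-1}=\pi^2/(6x)$, and to $\tfrac12\zeta(-1)x=-x/24$, using $\zeta(-1)=-1/12$.

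The only genuinely nontrivial step is to convert the surviving logarithmic series back into a Lambert-type series. Expanding $\log(1-u)=-\sum_{\ell\geq1}u^\ell/\ell$, interchanging the two summations, and summing the resulting geometric series gives
\begin{align*}
-\sum_{m=1}^{\infty}\log\!\left(1-e^{-2\pi^2 m/\alpha}\right)=\sum_{m=1}^{\infty}\frac{1}{m\left(e^{2\pi^2 m/\alpha}-1\right)},
\end{align*}
and this is precisely the source of the second Lambert series in \eqref{Dedekind}. This is the step I expect to require the most attention, essentially to justify the interchange of summations, which is harmless here because of absolute convergence for $\alpha>0$.

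Finally I would substitute $x=2\alpha$ and impose $\alpha\beta=\pi^2$. Then $\Gamma(1)\zeta(2)/(2\alpha)=\pi^2/(12\alpha)=\beta/12$, the term $-x/24$ becomes $-\alpha/12$, and the exponent $2\pi^2 m/\alpha$ equals $2m\beta$, so the logarithmic series turns into $\sum_{m\geq1}\frac{1}{m(e^{2m\beta}-1)}$. Transposing this $\beta$-series to the left-hand side and using $\pi=\sqrt{\alpha\beta}$ to rewrite the elementary logarithm $\tfrac12\log(\alpha/\pi)$ in terms of $\log(\alpha/\beta)$, one collects the remaining constants into $(\beta-\alpha)/12$ and arrives at \eqref{Dedekind}. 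The accompanying remark then recognizes this as the classical transformation law for $\log\eta$.
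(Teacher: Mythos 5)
Your proposal is correct and is essentially identical to the paper's own proof: the paper likewise specializes Theorem \ref{k odd r=-1} (equation \eqref{gen_dekekind}) to $k=1$, identifies $\sum_{n} D_{1,-1}(n)e^{-nx}$ with $\sum_{n} 1/(n(e^{nx}-1))$, re-expands the logarithmic series into the Lambert series $\sum_{n} 1/(n(e^{4\pi^2 n/x}-1))$, and then sets $x=2\alpha$, $\beta=\pi^2/\alpha$. One caveat worth recording: carried out exactly, both your computation and the paper's yield $\frac{\beta-\alpha}{12}+\frac{1}{4}\log\left(\frac{\alpha}{\beta}\right)$, which is the classical transformation law for $\log\eta$, so the minus sign in front of the logarithm in \eqref{Dedekind} as printed is a sign typo in the statement rather than a defect of your argument.
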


In the next chapter we mention a few important results which will be useful throughout the manuscript.  We mention the proof of those results which are not well-known. 
\section{Well-known results}
%{\bf State results on Gamma function and Meijer $G$-function.}
%The theory of Riemann zeta function $\zeta(s)$ and the theory of gamma function $\Gamma(s)$ are intimately connected with each other.  
%In \eqref{symmetric form},  we mentioned the symmetric form of the functional equation of $\zeta(s)$.  
An asymmetric form of the functional equation of $\zeta(s)$ can be written as
\begin{align}\label{Functional equation in asymmetric form}
\zeta(s)=2^{s}\pi^{s-1}\zeta(1-s)\Gamma(1-s)\sin\left(\frac{\pi s}{2}\right).
\end{align}
We know that $\zeta(s)$ has a simple pole at $s=1$ with residue $1$ and the Laurent series expansion is 
\begin{align}\label{Series expansion of Zeta}
\zeta(s)=\frac{1}{s-1}+\gamma+ \sum_{n=1}^{\infty} \frac{(-1)^n \gamma_n (s-1)^n}{n!},
\end{align}
where $\gamma_n$ are called the Stieltjes constants and defined by
\begin{align*}
\gamma_n= \lim_{i \rightarrow \infty} \left( \sum_{j=1}^{i} \frac{(\log j)^n}{j} -\frac{( \log i)^{n+1}}{n+1} \right),
\end{align*}
and 
 $\gamma$ is the well-known Euler–Mascheroni constant.    
% For any $n \in \mathbb{N}\cup \{0 \}$,  one can show that 
%\begin{align}\label{Residue_Gamma}
% \mathop{\rm Res}_{s=-n}  \Gamma(s) = \lim_{s \rightarrow -n} (s+n) \Gamma(s)= \frac{(-1)^n}{n!}. 
%\end{align}
The Laurent series expansion of $\Gamma(s)$ at $s=0$ is 
\begin{align}\label{series expansion of Gamma around zero}
 \Gamma(s)=\frac{1}{s}-\gamma+\frac{1}{2}\left(\gamma^2 +\frac{\pi^2}{6}\right)s-\frac{1}{6}\left(\gamma^3+\gamma \frac{\pi^2}{2}+2\zeta(3) \right)s^2+\cdots.
\end{align}
More generally,  $\Gamma(s)$ has Laurent series expansion at every negative integer.  
\begin{lemma}\label{Laurent_gamma_1+r}
For $r \leq -2$,  we have
$$ \Gamma(s)=\frac{a_{-1}}{s-(1+r)}+a_{0}+a_{1}(s-(1+r))+\cdots, $$
where 
$a_{-1}=\frac{(-1)^{r+1}}{(-(1+r))!} $
and $ a_{0}=\frac{(-1)^{r+1}}{(-(1+r))!} \left(-\gamma +\sum_{k=1}^{-(1+r)}\frac{1}{k}\right)$.
\end{lemma}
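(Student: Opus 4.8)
The plan is to reduce the expansion at the negative integer $s=1+r$ to the well-understood behaviour of $\Gamma$ near $s=0$, using only the functional recurrence $\Gamma(s+1)=s\,\Gamma(s)$. Setting $n:=-(1+r)$, the hypothesis $r\le -2$ gives $n\ge 1$, so $s=1+r=-n$ is a simple pole of $\Gamma$, and the assertion is exactly its Laurent expansion there. I would work in the local variable $w:=s-(1+r)=s+n$, so that the sought expansion reads $\Gamma(s)=a_{-1}w^{-1}+a_{0}+a_{1}w+\cdots$, and the whole task is to extract $a_{-1}$ and $a_{0}$.

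First I would iterate the recurrence $n+1$ times to obtain the exact identity
\begin{equation*}
\Gamma(s)=\frac{\Gamma(s+n+1)}{s(s+1)\cdots(s+n)},
\end{equation*}
valid in a punctured neighbourhood of $s=-n$. The numerator equals $\Gamma(w+1)$, which is analytic at $w=0$; from \eqref{series expansion of Gamma around zero} (equivalently $\Gamma(1)=1$ and $\Gamma'(1)=-\gamma$) I get $\Gamma(w+1)=1-\gamma w+O(w^{2})$.

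Next I would isolate the pole-producing factor in the denominator. Substituting $s=-n+w$, the product factorises as $w\cdot P(w)$ with $P(w):=\prod_{j=0}^{n-1}\bigl((j-n)+w\bigr)$, so that $P(0)=\prod_{j=0}^{n-1}(j-n)=(-1)^{n}n!$. The linear coefficient is read off from the logarithmic derivative $\frac{P'(0)}{P(0)}=\sum_{j=0}^{n-1}\frac{1}{j-n}=-\sum_{k=1}^{n}\frac{1}{k}$, giving $P(w)=(-1)^{n}n!\bigl(1-(\sum_{k=1}^{n}\tfrac1k)\,w+O(w^{2})\bigr)$.

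Finally I would divide the two expansions and invert the denominator as a geometric series:
\begin{align*}
\Gamma(s)&=\frac{1-\gamma w+O(w^{2})}{(-1)^{n}\,n!\,w\,\bigl(1-(\sum_{k=1}^{n}\tfrac1k)\,w+O(w^{2})\bigr)}\\
&=\frac{(-1)^{n}}{n!}\,\frac{1}{w}\Bigl(1+\bigl(\sum_{k=1}^{n}\tfrac1k-\gamma\bigr)w+O(w^{2})\Bigr).
\end{align*}
Using $1/(-1)^{n}=(-1)^{n}=(-1)^{r+1}$ and $n!=(-(1+r))!$, reading off the $w^{-1}$ and $w^{0}$ coefficients yields precisely $a_{-1}=\frac{(-1)^{r+1}}{(-(1+r))!}$ and $a_{0}=\frac{(-1)^{r+1}}{(-(1+r))!}\bigl(-\gamma+\sum_{k=1}^{-(1+r)}\tfrac1k\bigr)$, as claimed. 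The computation is entirely routine; the one place demanding care is the sign bookkeeping in $P(0)$ and in the logarithmic derivative $P'(0)/P(0)$, since an error there would corrupt both $a_{-1}$ and the harmonic sum in $a_{0}$. As a cross-check one may instead start from the reflection formula $\Gamma(s)\Gamma(1-s)=\pi/\sin(\pi s)$: here $\sin(\pi s)=(-1)^{n}\sin(\pi w)$ is odd in $w$, so its expansion contributes no linear term, and the constant $a_{0}$ is produced solely through $1/\Gamma(1-s)=1/\Gamma(1+n-w)$ via the digamma value $\psi(1+n)=-\gamma+\sum_{k=1}^{n}\tfrac1k$, recovering the same answer.
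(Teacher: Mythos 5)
Your proof is correct in every detail: the reduction to $w=s+n$ with $n=-(1+r)$, the factorisation of the denominator as $w\,P(w)$ with $P(0)=(-1)^n n!$ and $P'(0)/P(0)=-\sum_{k=1}^n\frac1k$, and the final sign bookkeeping $(-1)^n=(-1)^{r+1}$ all check out, as does your reflection-formula cross-check. The paper itself offers no proof to compare against: the lemma sits in its ``Well-known results'' section, where the authors state they only prove results that are not standard, so your argument via iterating $\Gamma(s+1)=s\Gamma(s)$ supplies exactly the routine verification the paper leaves implicit.
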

Euler proved that $\Gamma(s)$ satisfy the following beautiful reflection formula:
\begin{align}\label{Euler reflection formula}
\Gamma(s)\Gamma(1-s)=\frac{\pi}{\sin(\pi s)}, \quad  \forall  s\in \mathbb{C-Z}.
\end{align}
The multiplication formula for the gamma function stated as
\begin{lemma}\label{Multiplication formula for Gamma function}
For any positive integer $k$, 
\begin{align}\label{multiplication}
\Gamma(ks)=\frac{k^{ks}}{\sqrt{k}(2\pi)^{\frac{k-1}{2}}}\prod_{l=1}^{k-1}\Gamma(s)\Gamma\left(s+\frac{l}{k}\right).
\end{align}
In particular,  substituting $k=2$,  we have
\begin{align*}
\Gamma(2s)= \frac{2^{2s-1}}{\pi} \Gamma(s) \Gamma\left( s+ \frac{1}{2} \right).
\end{align*}
This is known as duplication formula. 
\end{lemma}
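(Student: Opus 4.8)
The plan is to show that the two sides of \eqref{multiplication} agree up to a multiplicative constant, and then to determine that constant. Writing the product on the right as $\Gamma(s)\,\Gamma\!\left(s+\tfrac{1}{k}\right)\cdots\Gamma\!\left(s+\tfrac{k-1}{k}\right)=\prod_{l=0}^{k-1}\Gamma\!\left(s+\tfrac{l}{k}\right)$, I would introduce
$$
F(s) := k^{ks}\,\frac{\prod_{l=0}^{k-1}\Gamma\!\left(s+\tfrac{l}{k}\right)}{\Gamma(ks)}
$$
and first verify that $F$ is periodic with period $1$. Applying the functional equation $\Gamma(w+1)=w\,\Gamma(w)$ to each numerator factor gives $\prod_{l=0}^{k-1}\Gamma\!\left(s+1+\tfrac{l}{k}\right)=\prod_{l=0}^{k-1}\!\left(s+\tfrac{l}{k}\right)\Gamma\!\left(s+\tfrac{l}{k}\right)$, while the denominator becomes $\Gamma(ks+k)=\prod_{j=0}^{k-1}(ks+j)\,\Gamma(ks)$. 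Since $\prod_{l=0}^{k-1}\!\left(s+\tfrac{l}{k}\right)=k^{-k}\prod_{j=0}^{k-1}(ks+j)$ and $k^{k(s+1)}=k^{k}k^{ks}$, every extra factor cancels and $F(s+1)=F(s)$.

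Next I would argue that $F$ is in fact constant. The crucial point is that the singularities of numerator and denominator cancel exactly: $\Gamma(ks)$ has a simple pole at each $s=-m/k$ with $m\geq 0$, and writing $m=nk+l$ with $0\leq l\leq k-1$ matches this with the simple pole of the single factor $\Gamma\!\left(s+\tfrac{l}{k}\right)$; no two numerator factors are singular at the same point. Hence $F$ extends to an entire, nowhere-vanishing function, so $F=e^{g}$ for some entire $g$. Combining this with the period-$1$ periodicity from the previous step and Stirling-type bounds for $\Gamma$ on vertical lines (which keep $\log F$ bounded on a single period strip), a Liouville-type argument forces $F$ to be a constant.

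Finally I would evaluate that constant at the convenient value $s=\tfrac{1}{k}$. There $\Gamma(ks)=\Gamma(1)=1$, the prefactor is $k^{ks}=k$, and the numerator is $\prod_{l=0}^{k-1}\Gamma\!\left(\tfrac{l+1}{k}\right)=\prod_{l=1}^{k-1}\Gamma\!\left(\tfrac{l}{k}\right)$, so $F\equiv k\prod_{l=1}^{k-1}\Gamma\!\left(\tfrac{l}{k}\right)$. To compute this product I would pair the index $l$ with $k-l$ and invoke Euler's reflection formula \eqref{Euler reflection formula}, obtaining $\left(\prod_{l=1}^{k-1}\Gamma\!\left(\tfrac{l}{k}\right)\right)^{2}=\pi^{k-1}\big/\prod_{l=1}^{k-1}\sin\!\left(\tfrac{\pi l}{k}\right)$; inserting the classical value $\prod_{l=1}^{k-1}\sin\!\left(\tfrac{\pi l}{k}\right)=k/2^{k-1}$ gives $\prod_{l=1}^{k-1}\Gamma\!\left(\tfrac{l}{k}\right)=(2\pi)^{(k-1)/2}/\sqrt{k}$, hence $F\equiv\sqrt{k}\,(2\pi)^{(k-1)/2}$. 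Rearranging $F(s)=\sqrt{k}\,(2\pi)^{(k-1)/2}$ is exactly \eqref{multiplication}, and specializing to $k=2$ yields the stated duplication formula.

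The step I expect to be the main obstacle is the passage from periodicity to constancy of $F$: this is where genuine analytic input is required, namely the growth control of $\Gamma$ along vertical lines via Stirling together with the verification that the apparent poles are removable, rather than purely algebraic bookkeeping. Pinning down the constant is the only remaining subtlety, and it is handled cleanly by the reflection formula already recorded in \eqref{Euler reflection formula}.
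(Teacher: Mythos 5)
The paper offers no proof of this lemma at all: it appears in the ``Well-known results'' section as Gauss's multiplication theorem, stated without argument or citation, and is then used as a black box (in the proof of Lemma \ref{MeijerG(k,0,0,k)} and again at \eqref{Use of multiplication}). Your proposal therefore supplies something the paper omits, and it is correct; it is the classical Liouville-style proof. The periodicity computation $F(s+1)=F(s)$ is right; the pole-cancellation argument is right (the fractional parts $l/k$, $0\le l\le k-1$, are distinct, so each simple pole of $\Gamma(ks)$ at $s=-n-l/k$ meets the simple pole of exactly one numerator factor, and $\Gamma$ never vanishes, so $F$ is entire and zero-free); and the evaluation at $s=1/k$ via \eqref{Euler reflection formula} together with $\prod_{l=1}^{k-1}\sin(\pi l/k)=k/2^{k-1}$ (itself standard, from factoring $1+z+\cdots+z^{k-1}$ at $z=1$) gives the constant $\sqrt{k}\,(2\pi)^{(k-1)/2}$ correctly. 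The step you flag as the main obstacle is indeed where the analytic content sits, and it does go through: on the strip $0\le\Re(s)\le 1$, Lemma \ref{Stirling} (which is uniform on vertical strips) shows that the numerator of $|F(\sigma+iT)|$ contributes $|T|^{k\sigma+(k-1)/2-k/2}e^{-k\pi|T|/2}$ while $|\Gamma(k\sigma+ikT)|$ contributes $(k|T|)^{k\sigma-1/2}e^{-k\pi|T|/2}$, so the exponential factors cancel exactly, the powers of $|T|$ cancel, and $|F|$ tends to a finite limit as $|T|\to\infty$; hence $F$ is bounded on one period strip, bounded on all of $\mathbb{C}$ by periodicity, and constant by Liouville applied to $F$ itself (passing to $\log F$ is not even needed). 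Carrying out this cancellation explicitly is the one piece you should add to make the argument complete.

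Two caveats, both typographical errors in the paper rather than defects of your proof. First, read literally, the right-hand side of \eqref{multiplication} says $\Gamma(s)^{k-1}\prod_{l=1}^{k-1}\Gamma(s+l/k)$, which is false; the intended meaning (clear from how the lemma is applied later) is $\Gamma(s)\prod_{l=1}^{k-1}\Gamma\bigl(s+\frac{l}{k}\bigr)=\prod_{l=0}^{k-1}\Gamma\bigl(s+\frac{l}{k}\bigr)$, which is exactly what you prove. Second, the displayed $k=2$ case should read $\Gamma(2s)=\frac{2^{2s-1}}{\sqrt{\pi}}\,\Gamma(s)\Gamma\bigl(s+\frac{1}{2}\bigr)$, with $\sqrt{\pi}$ rather than $\pi$ in the denominator; your constant yields this correct version, so your closing claim that specializing $k=2$ recovers the stated formula holds only up to that typo.
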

The next result gives an important information about the asymptotic expansion of the gamma function,  popularly known as Stirling's formula. 
\begin{lemma}\label{Stirling}
 For $s = \sigma + i T$ in a vertical strip $\alpha \leq \sigma \leq \beta$, 
\begin{equation}\label{stirling equn}
|\Gamma (\sigma + i T)| = \sqrt{2\pi} | T|^{\sigma - 1/2} e^{-\frac{1}{2} \pi |T|} \left(1 + O\left(\frac{1}{|T|}\right)  \right),
\end{equation}
as $|T|\rightarrow \infty$. 
 \end{lemma}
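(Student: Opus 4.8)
The plan is to obtain the modulus estimate from the classical Stirling asymptotic expansion of $\log\Gamma(s)$ by taking real parts. Recall that, away from the negative real axis, i.e. for $|\arg s|\le \pi-\delta$ and $|s|\to\infty$, one has
\begin{equation*}
\log\Gamma(s)=\left(s-\tfrac12\right)\log s-s+\tfrac12\log(2\pi)+O\!\left(\frac{1}{|s|}\right),
\end{equation*}
which can be derived, for instance, from Binet's integral representation for $\log\Gamma(s)$. Since $|\Gamma(s)|=\exp\big(\Re\log\Gamma(s)\big)$, it suffices to compute $\Re\log\Gamma(s)$ when $s=\sigma+iT$ with $\sigma$ confined to the fixed strip $\alpha\le\sigma\le\beta$ and $|T|\to\infty$. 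Note that for such $s$ the point stays uniformly bounded away from the negative real axis, so the expansion above applies with error uniform in $\sigma$.

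First I would write $\log s=\log|s|+i\arg s$ and expand each piece for large $|T|$ with $\sigma$ in the strip. From $|s|=\sqrt{\sigma^2+T^2}$ we get $\log|s|=\log|T|+O(1/T^2)$, and from $\arg s=\tfrac{\pi}{2}\operatorname{sgn}(T)-\arctan(\sigma/T)$ together with $\arctan(\sigma/T)=\sigma/T+O(1/T^3)$ we obtain $T\arg s=\tfrac{\pi}{2}|T|-\sigma+O(1/T^2)$. Separating real and imaginary parts in $(s-\tfrac12)\log s=[(\sigma-\tfrac12)+iT][\log|s|+i\arg s]$ then gives
\begin{equation*}
\Re\Big[\big(s-\tfrac12\big)\log s\Big]=\left(\sigma-\tfrac12\right)\log|T|-\tfrac{\pi}{2}|T|+\sigma+O\!\left(\frac{1}{|T|}\right).
\end{equation*}

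Adding the contribution $\Re(-s)=-\sigma$ and the constant $\tfrac12\log(2\pi)$, the two $\sigma$-terms cancel and we are left with
\begin{equation*}
\Re\log\Gamma(s)=\left(\sigma-\tfrac12\right)\log|T|-\tfrac{\pi}{2}|T|+\tfrac12\log(2\pi)+O\!\left(\frac{1}{|T|}\right).
\end{equation*}
Exponentiating, and using $e^{O(1/|T|)}=1+O(1/|T|)$, yields the asserted formula \eqref{stirling equn}. The main point requiring care is the bookkeeping of this cancellation of the linear-in-$\sigma$ contributions, since it is exactly what forces the $\sigma$-dependence to collapse into the single factor $|T|^{\sigma-1/2}$; one must also check that every error term is uniform across the strip $\alpha\le\sigma\le\beta$, which follows because $\sigma$ ranges over a compact set while $|T|\to\infty$. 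No genuine difficulty arises beyond this careful accounting.
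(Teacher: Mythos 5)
Your proof is correct. Note, however, that the paper does not actually prove this lemma at all: its ``proof'' consists of the single line citing Iwaniec--Kowalski \cite[p.~151]{IK}, so there is no argument in the paper to compare against step by step. Your derivation is the standard one that such references give: take real parts in the Stirling expansion
\begin{equation*}
\log\Gamma(s)=\Bigl(s-\tfrac12\Bigr)\log s-s+\tfrac12\log(2\pi)+O\bigl(1/|s|\bigr),\qquad |\arg s|\le \pi-\delta,
\end{equation*}
and track the asymptotics of $\log|s|$ and $\arg s$ in the strip. Your bookkeeping is accurate: the identity $\arg s=\tfrac{\pi}{2}\operatorname{sgn}(T)-\arctan(\sigma/T)$ holds for all $\sigma$ in the strip once $|T|>0$, the product $(\sigma-\tfrac12)\log|s|$ contributes an error $O(\log|T|/T^{2})$, which is still $O(1/|T|)$, the linear-in-$\sigma$ terms $+\sigma$ and $-\sigma$ cancel exactly as you say, and uniformity follows since $\sigma$ ranges over a compact interval, so $s$ stays in a fixed sector $|\arg s|\le \pi-\delta$ for $|T|$ large. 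What your approach buys over the paper's is self-containedness: the lemma is reduced to the classical expansion of $\log\Gamma$ (available from Binet's formula), rather than being outsourced to a textbook.
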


\begin{proof}
The proof of this result can be found in \cite[p. ~151]{IK}.
\end{proof} 

\begin{lemma}\label{boundzeta}
For  $\sigma\geq \sigma_0$, 
 there exist a  
constant $C(\sigma_0)$, such that 
\begin{equation*}
|\zeta(\sigma + iT) | \ll |T|^{C(\sigma_0)} 
\end{equation*}
as $|T| \rightarrow \infty.$
\end{lemma}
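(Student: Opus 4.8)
The plan is to prove the bound by splitting the range $\sigma \ge \sigma_0$ into a ``trivial'' region where the Dirichlet series converges, a right-hand region $\sigma \ge 1/2$ where an elementary integral representation suffices, and a left-hand region handled by the functional equation together with Stirling's formula. If $\sigma_0 > 1$ there is nothing to do: for every $\sigma \ge \sigma_0$ one has $|\zeta(\sigma + iT)| \le \sum_{n\ge 1} n^{-\sigma} \le \zeta(\sigma_0)$, so the estimate holds with $C(\sigma_0) = 0$. Hence I may assume $\sigma_0 \le 1$.

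First I would treat the region $\sigma \ge 1/2$. For $\sigma \ge 2$ the trivial estimate already gives $|\zeta(\sigma + iT)| \le \zeta(2)$. For $1/2 \le \sigma \le 2$ I would invoke the standard representation $\zeta(s) = \frac{s}{s-1} - s\int_1^\infty \{u\}\, u^{-s-1}\,\mathrm{d}u$, valid for $\Re(s) > 0$, where $\{u\}$ denotes the fractional part. Since the integral is bounded by $\int_1^\infty u^{-\sigma - 1}\,\mathrm{d}u = 1/\sigma \le 2$ and $|s/(s-1)| \le |s|/|T|$ (because $|s-1| \ge |T|$), this yields $|\zeta(\sigma + iT)| \ll |T|$ uniformly on the bounded strip $1/2 \le \sigma \le 2$ as $|T| \to \infty$. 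Combining the two sub-cases gives $|\zeta(\sigma+iT)| \ll |T|$ for all $\sigma \ge 1/2$ and $|T| \ge 1$.

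Next I would reach the remaining range $\sigma_0 \le \sigma \le 1/2$ (needed only when $\sigma_0 < 1/2$) through the asymmetric functional equation \eqref{Functional equation in asymmetric form}, writing $s = \sigma + iT$ and
\begin{align*}
\zeta(s) = 2^{s}\pi^{s-1}\zeta(1-s)\Gamma(1-s)\sin\left(\frac{\pi s}{2}\right).
\end{align*}
The factor $|2^{s}\pi^{s-1}| = 2^{\sigma}\pi^{\sigma-1}$ stays bounded on this compact $\sigma$-interval. Since $\Re(1-s) = 1 - \sigma \ge 1/2$, the previous step applies to $\zeta(1-s)$ and gives $|\zeta(1-s)| \ll |T|$. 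Stirling's formula (Lemma \ref{Stirling}) yields $|\Gamma(1-s)| \ll |T|^{1/2 - \sigma}\, e^{-\pi|T|/2}$, while the elementary bound $|\sin(\pi s/2)| \ll e^{\pi|T|/2}$ controls the sine. Multiplying these four estimates, the two exponential factors cancel and leave $|\zeta(\sigma + iT)| \ll |T|^{3/2 - \sigma} \le |T|^{3/2 - \sigma_0}$. Taking $C(\sigma_0) = \max\{1,\, 3/2 - \sigma_0\}$ then covers every $\sigma \ge \sigma_0$.

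The one point demanding care — and the only genuine obstacle — is the exact cancellation of the exponentials $e^{-\pi|T|/2}$ coming from $\Gamma(1-s)$ and $e^{\pi|T|/2}$ coming from $\sin(\pi s/2)$: each of these factors is individually of exponential size in $|T|$, and it is only their product that is polynomially bounded. I would therefore track the constants in the Stirling asymptotic carefully and check that every implied constant is uniform over the bounded $\sigma$-interval, so that the final exponent $C(\sigma_0)$ depends only on $\sigma_0$ and not on $T$.
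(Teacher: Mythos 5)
Your proof is correct, but it is worth noting that the paper does not actually prove this lemma at all: it simply cites Titchmarsh \cite[p.~95]{Tit}, where the polynomial growth of $\zeta$ in vertical strips is established (there via the order function $\mu(\sigma)$ and convexity arguments). What you have written is essentially the standard self-contained argument that underlies that citation, and every step checks out: the trivial bound for $\sigma \geq 2$; the representation $\zeta(s)=\frac{s}{s-1}-s\int_1^\infty \{u\}u^{-s-1}\,\mathrm{d}u$ giving $|\zeta(\sigma+iT)|\ll |T|$ on $\tfrac12\leq\sigma\leq 2$ (the integral is $\leq 1/\sigma$ and $|s-1|\geq|T|$); and the passage to $\sigma_0\leq\sigma\leq\tfrac12$ via the functional equation \eqref{Functional equation in asymmetric form}, where the decay $e^{-\pi|T|/2}$ from Stirling's formula (Lemma \ref{Stirling}) applied to $\Gamma(1-s)$ exactly cancels the growth $|\sin(\pi s/2)|\leq\cosh(\pi T/2)\ll e^{\pi|T|/2}$, leaving the polynomial factor $|T|^{3/2-\sigma}$. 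Your final exponent $C(\sigma_0)=\max\{1,\,3/2-\sigma_0\}$ is weaker than what convexity gives, but the lemma only asserts the existence of some polynomial exponent, so this is immaterial here. In short: your route buys self-containedness, using only tools already stated in the paper (the asymmetric functional equation and Lemma \ref{Stirling}), at the cost of a few lines; the paper's citation buys brevity and, implicitly, sharper exponents that are never needed, since the lemma is only used to kill the horizontal integrals $H_1, H_2$ against the factor $e^{-\pi|T|/2}$ from Stirling.
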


\begin{proof}
One can find the proof in \cite[p.~95]{Tit}.
\end{proof}

%Next we state another important special function,  namely Meijer $G$-function which is the generalization of many well-known special functions in the literature.  
%
%Let $m, n, p, q$ be four integers such that $0\leq m \leq q$, $0\leq n \leq p$.  Let us consider $a_1, \cdots, a_p$ and $b_1, \cdots, b_q$ complex numbers such that $a_i - b_j \not\in \mathbb{N}$ for $1 \leq i \leq n$ and $1 \leq j \leq m$.  Then the Meijer $G$-function \cite[p.~415, Definition 16.17]{NIST}  is defined by the  line integral:
%\begin{align}\label{MeijerG}
%G_{p,q}^{\,m,n} \!\left(  \,\begin{matrix} a_1,\cdots , a_p \\ b_1, \cdots , b_q \end{matrix} \; \Big| z   \right) = \frac{1}{2 \pi i} \int_L \frac{\prod_{j=1}^m \Gamma(b_j + s) \prod_{j=1}^n \Gamma(1 - a_j -s) z^{-s}  } {\prod_{j=m+1}^q \Gamma(1 - b_j - s) \prod_{j=n+1}^p \Gamma(a_j + s)}\mathrm{ds},
%\end{align}
%where the vertical line of integration $L$,  going from $-i \infty$ to $+i \infty$, separates the poles of the factors $\Gamma(b_j+s)$ from those of the factors $\Gamma(1-a_j-s)$.  This integral converges if $p+q < 2(m+n)$ and $|\arg(z)| < (m+n - \frac{p+q}{2}) \pi$. 

Next, we shall mention a few important special cases of the Meijer $G$-function. 
An immediate special case is the following result.  Putting $ m=q=1$ and $n=p=0$ in \eqref{MeijerG},  one can see that
\begin{align}\label{MeijerG(1,0,0,1)}
 G_{0,1}^{\,1,0} \!\left(  \,\begin{matrix}\{\}\\b \end{matrix} \; \Big|z  \right)=  \frac{1}{2 \pi i} \int_{c-i \infty}^{c+i \infty} \Gamma(b+s) z^{-s} \mathrm{d}s= e^{-z} z^b,
 \end{align}
 provided $\Re(s)=c > -\Re(b)$ and $\Re(z)>0$. 
%\end{lemma}
Again, plugging $m=q=2$ and $n=p=0$ in the definition \eqref{MeijerG} of the Meijer $G$-function,  we will have the following result.

\begin{lemma}\label{MeijerG(2,0,0,2)}
Assuming the real part of the line of integration is bigger than $\max(-Re(b_1), - \Re(b_2)).  $ For $|\arg(z)|< \pi$,  we have
\begin{align*}
G_{0,2}^{\,2,0} \!\left(  \,\begin{matrix}\{\}\\b_1,b_2 \end{matrix} \; \Big| z   \right)=2 z^{\frac{1}{2} (b_1+b_2)}K_{b_1-b_2}(2\sqrt{z}),
\end{align*}
where $K_\nu(z)$ is the modified Bessel function.  
%In particular, 
%\begin{align}\label{K_3by2} 
% K_{\frac{3}{2}}(2\sqrt{z})=\frac{e^{-2\sqrt{z}} \sqrt{\pi} \left(1+\frac{1}{2\sqrt{z}}\right)}{2z^{\frac{1}{4}}}.
% \end{align}
\end{lemma}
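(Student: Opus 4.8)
The plan is to reduce the claim to the evaluation of a Mellin--Barnes integral and then recognize the resulting residue series as a modified Bessel function. Substituting $m=q=2$ and $n=p=0$ into the definition \eqref{MeijerG}, the left-hand side becomes
\begin{align*}
G_{0,2}^{\,2,0} \!\left(  \,\begin{matrix}\{\}\\b_1,b_2 \end{matrix} \; \Big| z   \right) = \frac{1}{2\pi i}\int_L \Gamma(b_1+s)\Gamma(b_2+s)\, z^{-s}\,\mathrm{d}s,
\end{align*}
where $L$ runs from $-i\infty$ to $+i\infty$ with $\Re(s) > \max(-\Re(b_1),-\Re(b_2))$, so that it lies to the right of the poles of the two gamma factors. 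Here $p+q = 2 < 4 = 2(m+n)$, and the hypothesis $|\arg(z)| < \pi$ is precisely the stated convergence condition $|\arg(z)| < (m+n-\tfrac{p+q}{2})\pi$. Stirling's formula (Lemma \ref{Stirling}) shows that the product $\Gamma(b_1+s)\Gamma(b_2+s)$ decays exponentially along horizontal segments, which justifies shifting $L$ to the left and collecting residues.

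First I would treat the generic case $b_1-b_2\notin\mathbb{Z}$, in which the integrand has only simple poles, located at $s=-b_1-\ell$ and $s=-b_2-\ell$ for $\ell\geq 0$. Since the residue of $\Gamma(b_1+s)$ at $s=-b_1-\ell$ equals $(-1)^\ell/\ell!$, closing the contour to the left yields the sum
\begin{align*}
\sum_{\ell=0}^\infty \frac{(-1)^\ell}{\ell!}\Gamma(b_2-b_1-\ell)\, z^{b_1+\ell} + \sum_{\ell=0}^\infty \frac{(-1)^\ell}{\ell!}\Gamma(b_1-b_2-\ell)\, z^{b_2+\ell}.
\end{align*}
Setting $\nu = b_1-b_2$ and applying the reflection formula \eqref{Euler reflection formula} to each $\Gamma(\pm\nu-\ell)$ converts them into $\pm\pi/\sin(\pi\nu)$ times the reciprocal gamma factors $1/\Gamma(\ell\pm\nu+1)$. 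After this rewriting each series is exactly $z^{(b_1+b_2)/2}$ times the standard power series of $I_{\mp\nu}(2\sqrt{z})$, where $I_\nu$ denotes the modified Bessel function of the first kind.

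I would then combine the two pieces via the defining relation $K_\nu(w) = \frac{\pi}{2\sin(\pi\nu)}\bigl(I_{-\nu}(w)-I_\nu(w)\bigr)$ with $w=2\sqrt{z}$, which collapses the sum to $2 z^{(b_1+b_2)/2} K_{b_1-b_2}(2\sqrt{z})$, as asserted. (Equivalently and more rapidly, one may invoke the known Mellin transform $\int_0^\infty x^{s-1}K_\nu(x)\,\mathrm{d}x = 2^{s-2}\Gamma(\tfrac{s+\nu}{2})\Gamma(\tfrac{s-\nu}{2})$ and perform the change of variables $s \mapsto 2s+(b_1+b_2)$ together with $x=2\sqrt{z}$ to match the integrand of the $K$-transform directly with the displayed Meijer $G$-integral.)

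The main obstacle is the exceptional case $b_1-b_2\in\mathbb{Z}$. There the poles of $\Gamma(b_1+s)$ and $\Gamma(b_2+s)$ collide, the integrand acquires double poles, and the identity $K_\nu = \frac{\pi}{2\sin(\pi\nu)}(I_{-\nu}-I_\nu)$ degenerates into a $0/0$ indeterminacy, the integer-order Bessel function carrying a logarithmic term. I would dispose of this by treating $b_1$ and $b_2$ as free complex parameters: both sides are analytic in $(b_1,b_2)$ on the region where the integral converges, the left side by differentiation under the integral sign (justified by the Stirling bound) and the right side because $K_\nu(w)$ is analytic in its order $\nu$. Hence the equality established on the dense set $b_1-b_2\notin\mathbb{Z}$ propagates to all admissible $b_1,b_2$ by continuity, completing the proof.
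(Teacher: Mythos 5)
Your proof is correct, but it takes a genuinely different and far more self-contained route than the paper. The paper disposes of this lemma in one line, citing Oberhettinger's \emph{Tables of Mellin Transforms} \cite[p.~115, Equn (11.1)]{Ober}; in effect the lemma is read off as the Mellin inversion of the tabulated transform $\int_0^\infty x^{s-1}K_\nu(x)\,\mathrm{d}x = 2^{s-2}\Gamma\bigl(\tfrac{s+\nu}{2}\bigr)\Gamma\bigl(\tfrac{s-\nu}{2}\bigr)$, which is exactly the shortcut you offer in your parenthetical remark. Your main argument is instead a classical Barnes-type derivation from scratch: close the contour to the left, sum the residues of $\Gamma(b_1+s)\Gamma(b_2+s)z^{-s}$ at $s=-b_1-\ell$ and $s=-b_2-\ell$, use the reflection formula to rewrite the two residue series as $\tfrac{\pi}{\sin(\pi\nu)}z^{(b_1+b_2)/2}$ times $-I_{\nu}(2\sqrt{z})$ and $I_{-\nu}(2\sqrt{z})$ respectively (with $\nu=b_1-b_2$), and collapse them via $K_\nu(w)=\tfrac{\pi}{2\sin(\pi\nu)}\bigl(I_{-\nu}(w)-I_\nu(w)\bigr)$; the degenerate case $b_1-b_2\in\mathbb{Z}$ is then recovered by analyticity in the parameters, which is legitimate since the Mellin--Barnes integral is analytic in $(b_1,b_2)$ and $K_\nu(w)$ is entire in its order $\nu$. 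What the paper's citation buys is brevity; what your derivation buys is independence from the tables and a transparent view of where the hypothesis $|\arg(z)|<\pi$ actually enters. The one place you are terser than you should be is the contour shift itself: Stirling decay controls the horizontal segments, but you should also note that the vertical segment at $\Re(s)=-M$ (threaded between poles) vanishes as $M\to\infty$, because the reciprocal-factorial decay of $\Gamma(b_1+s)\Gamma(b_2+s)$ in that region beats the growth $|z|^{M}$ of $|z^{-s}|$ for any fixed $z\neq 0$; this is the step that converts the residue sum into an actual evaluation of the integral, and it is standard but not free.
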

\begin{proof}
This can be verified from \cite[p.~115, Equn (11.1)]{Ober}.
\end{proof}

\begin{lemma} \label{MeijerG(k,0,0,k)}
Let $k \geq 2$ be a positive integer.  Then 
\begin{align*}
G_{0,k}^{\,k,0} \!\left(  \,\begin{matrix}\{\}\\b,b+\frac{1}{k},b+\frac{2}{k},\cdots ,b+\frac{(k-1)}{k} \end{matrix} \; \Big|z  \right)=\frac{(2\pi)^{\frac{k-1}{2}}}{\sqrt{k}} z^b e^{-kz^{1/k}},
\end{align*}
provided $|\arg(z)| < \frac{k\pi}{2}. $
\end{lemma}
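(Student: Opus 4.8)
The plan is to collapse the $k$-fold Mellin--Barnes integral defining the left-hand side into the elementary one-parameter integral already evaluated in \eqref{MeijerG(1,0,0,1)}, by means of the Gauss multiplication formula for $\Gamma$.

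First I would specialise the definition \eqref{MeijerG} to $p=n=0$, $q=m=k$ and $b_j=b+\frac{j-1}{k}$, so that the left-hand side becomes
\begin{equation*}
\frac{1}{2\pi i}\int_{L}\prod_{j=0}^{k-1}\Gamma\left(b+s+\frac{j}{k}\right)z^{-s}\,\mathrm{d}s,
\end{equation*}
where $L$ is a vertical line whose abscissa is taken to the right of all poles of the integrand, i.e. with $\Re(b+s)>0$. Convergence of this integral, and the legitimacy of the manipulations below, is exactly governed by the conditions $p+q<2(m+n)$ (here $k<2k$) and $|\arg z|<\left(m+n-\frac{p+q}{2}\right)\pi=\frac{k\pi}{2}$ recorded just after \eqref{MeijerG}, which is precisely the hypothesis of the lemma.

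The decisive step is to replace the product of $k$ gamma functions by a single one. Applying the multiplication formula \eqref{multiplication} of Lemma \ref{Multiplication formula for Gamma function} with $s$ replaced by $b+s$ gives
\begin{equation*}
\prod_{j=0}^{k-1}\Gamma\left(b+s+\frac{j}{k}\right)=\frac{\sqrt{k}\,(2\pi)^{\frac{k-1}{2}}}{k^{k(b+s)}}\,\Gamma\left(k(b+s)\right).
\end{equation*}
Substituting this and pulling the $s$-independent constants outside the integral, I would then make the change of variable $u=k(b+s)$. Since $\mathrm{d}s=\mathrm{d}u/k$ and $k^{-ks}z^{-s}=k^{kb}z^{b}\left(kz^{1/k}\right)^{-u}$, and since the line $\Re(b+s)>0$ is carried to a vertical line $L'$ in the $u$-plane lying to the right of $\Re(u)=0$, the expression reduces to
\begin{equation*}
\frac{(2\pi)^{\frac{k-1}{2}}}{\sqrt{k}}\,z^{b}\cdot\frac{1}{2\pi i}\int_{L'}\Gamma(u)\left(kz^{1/k}\right)^{-u}\,\mathrm{d}u.
\end{equation*}

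Finally, the remaining integral is precisely the $b=0$ instance of \eqref{MeijerG(1,0,0,1)}, that is $G_{0,1}^{\,1,0}\!\left(\{\};0\mid w\right)=e^{-w}$, evaluated at $w=kz^{1/k}$; this identification is valid provided $\Re\left(kz^{1/k}\right)>0$, which holds precisely because $|\arg z|<\frac{k\pi}{2}$ forces $|\arg(z^{1/k})|<\frac{\pi}{2}$. Hence the integral equals $e^{-kz^{1/k}}$, and collecting the prefactor yields $\frac{(2\pi)^{(k-1)/2}}{\sqrt{k}}\,z^{b}e^{-kz^{1/k}}$, as claimed. I expect the only genuinely delicate points to be bookkeeping: confirming that $L$ is carried to the right of $\Re(u)=0$ so that \eqref{MeijerG(1,0,0,1)} applies verbatim, and tracking the branch of $z^{1/k}$ so that the argument restriction $|\arg z|<\frac{k\pi}{2}$ is exactly what places $w=kz^{1/k}$ in the right half-plane.
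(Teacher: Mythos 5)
Your proposal is correct and follows essentially the same route as the paper's own proof: both express the Meijer $G$-function as the $k$-fold Mellin--Barnes integral, collapse the product of gamma factors via the multiplication formula \eqref{multiplication} applied at $s+b$, change variables to reduce to the inverse Mellin transform of $\Gamma$, and invoke $|\arg(z)|<\frac{k\pi}{2}$ to justify the final evaluation as $e^{-kz^{1/k}}$. Your write-up is, if anything, slightly more careful about the contour bookkeeping and the branch of $z^{1/k}$ than the paper's.
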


\begin{proof}
With the help of the definition \eqref{MeijerG} of the Meijer-$G$ function,  we write
\begin{align*}
G_{0,k}^{\,k,0} \!\left(  \,\begin{matrix}\{\}\\ b,b+\frac{1}{k},\cdots ,b+\frac{(k-1)}{k} \end{matrix} \; \Big|z \right) &=\frac{1}{2\pi i} \int_{L}\prod_{l=1}^{k}\Gamma(s+b) \Gamma\left(s+b+\frac{l}{k}\right)z^{-s} \textrm{d}s,
\end{align*}
provided the poles of each of these gamma factor lie on the left of the line of integration.
Replacing $s$ by $s+b$ in the multiplication formula \eqref{multiplication} for $\Gamma(s)$,  we have
%\begin{align*}
%\Gamma(ks)=\frac{k^{ks}}{\sqrt{k}(2\pi)^{\frac{k-1}{2}}}\prod_{l=1}^{k-1}\Gamma(s)\Gamma\left(s+\frac{l}{k}\right)
%\end{align*}
%therefore 
\begin{align*}
\Gamma(k(s+b))=\frac{k^{k(s+b)}}{\sqrt{k}(2\pi)^{\frac{k-1}{2}}}\prod_{l=1}^{k-1}\Gamma(s+b)\Gamma\left(s+b+\frac{l}{k}\right).
\end{align*}
Plugging this expression,  one can see that
\begin{align*}
G_{0,k}^{\,k,0} \!\left(  \,\begin{matrix}\{\}\\ b,b+\frac{1}{k},\cdots ,b+\frac{(k-1)}{k} \end{matrix} \; \Big|z \right) & = \sqrt{k}(2\pi)^{\frac{k-1}{2}} \frac{1}{ 2\pi i} \int_{L} \frac{\Gamma(k(s+b))}{k^{k(s+b)} }z^{-s} \textrm{d}s \\
&=\sqrt{k} (2 \pi)^{\frac{k-1}{2}}  \frac{1}{2\pi i} \int_{L'}\Gamma(s_{1}) z^{-\frac{s_{1}}{k}+b} \frac{\textrm{d}s_1}{k^{s_{1}+1}}
\\&=\frac{(2 \pi)^{\frac{k-1}{2}} }{\sqrt{k}} \frac{1}{2\pi i} \int_{L'}\Gamma(s_{1})(k z^{\frac{1}{k}})^{-s_{1}} z^b {\rm d}s_{1}\\
&=\frac{(2 \pi)^{\frac{k-1}{2}} z^b }{ \sqrt{k}} e^{-kz^{\frac{1}{k}}},
\end{align*}
the last step is possible only when $|\arg(z)| < \frac{k \pi}{2}$. 
\end{proof}

Dixit and Maji \cite[Lemma 3.1] {DM20} used the below lemma to prove \eqref{zetageneqn}.
\begin{lemma}\label{dixit_maji_lemma 1}
Let $a, u,v$ be three real numbers. Then
\begin{align*}
2 \Re\left(\frac{e^{iuv}}{\exp(ae^{-iu})-1}\right)=\frac{\cos(a\sin(u)+uv)-e^{-a \cos(u)}\cos(uv)}{\cosh(a\cos(u))-\cos(a\sin(u))}.
\end{align*}
\end{lemma}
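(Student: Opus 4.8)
The plan is to reduce the identity to a direct computation of the real part of a complex quotient, using only the polar form of the exponential together with elementary trigonometric identities. First I would expand the denominator by writing $ae^{-iu} = a\cos u - ia\sin u$, so that
\begin{equation*}
\exp(ae^{-iu}) = e^{a\cos u}\left(\cos(a\sin u) - i\sin(a\sin u)\right).
\end{equation*}
Setting $A := e^{a\cos u}\cos(a\sin u) - 1$ and $B := -e^{a\cos u}\sin(a\sin u)$, the denominator becomes $\exp(ae^{-iu}) - 1 = A + iB$, while the numerator is simply $e^{iuv} = \cos(uv) + i\sin(uv)$.

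Next I would rationalize by multiplying numerator and denominator by $A - iB$, which gives
\begin{equation*}
\Re\left(\frac{e^{iuv}}{A+iB}\right) = \frac{A\cos(uv) + B\sin(uv)}{A^2 + B^2}.
\end{equation*}
The two remaining tasks are to simplify the numerator and the denominator separately. For the denominator, expanding the squares and using $\cos^2 + \sin^2 = 1$ collapses the terms involving $a\sin u$, leaving $A^2 + B^2 = e^{2a\cos u} - 2e^{a\cos u}\cos(a\sin u) + 1$; recognizing $e^{2a\cos u} + 1 = 2e^{a\cos u}\cosh(a\cos u)$ then yields the clean form $A^2 + B^2 = 2e^{a\cos u}\left(\cosh(a\cos u) - \cos(a\sin u)\right)$. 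For the numerator, grouping the terms that carry the common factor $e^{a\cos u}$ and applying the cosine angle-addition formula gives $A\cos(uv) + B\sin(uv) = e^{a\cos u}\cos(a\sin u + uv) - \cos(uv)$.

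Finally I would assemble these two computations, multiply through by the factor of $2$ appearing on the left-hand side, and cancel one power of $e^{a\cos u}$ from numerator and denominator to recover exactly the stated right-hand side. There is no genuine obstacle here: the whole argument is a bookkeeping exercise, and the only two nonroutine recognitions are the emergence of $\cosh(a\cos u)$ from $\tfrac{1}{2}(e^{a\cos u} + e^{-a\cos u})$ in the denominator and the collapse of two products into a single cosine via the angle-addition identity in the numerator. I would also note in passing that the identity is valid precisely when the denominator does not vanish, that is when $\cosh(a\cos u) \neq \cos(a\sin u)$, a condition implicit in the statement.
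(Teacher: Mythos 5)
Your proposal is correct, and it is essentially the same argument the paper uses: the paper cites this lemma from Dixit--Maji but proves its sibling (Lemma \ref{our lemma 1}) by exactly your route --- write $\exp(ae^{-iu})-1$ in rectangular form, multiply by the conjugate of the denominator, apply the angle-addition formula, and divide through by $e^{a\cos u}$ to produce the $\cosh$ term. Your $A,B$ bookkeeping is just a tidier organization of the same computation.
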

In a similar vein,  we have the following lemma.
\begin{lemma}\label{our lemma 1}
Let $a,u,v$ be three real numbers. Then
\begin{align*}
2 \Re\left(\frac{ie^{iuv}}{\exp(ae^{-iu})-1}\right)
=\frac{-\sin(uv+a\sin(u))+e^{-a\cos(u)}\sin(uv)}{\cosh(a\cos(u))-\cos(a\sin(u))}.
\end{align*}
\end{lemma}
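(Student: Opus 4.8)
The plan is to prove Lemma \ref{our lemma 1} by direct computation, mirroring the proof structure of the preceding Lemma \ref{dixit_maji_lemma 1}, whose conclusion it parallels exactly but with an extra factor of $i$ inside the real part. First I would rewrite the quantity $2\,\Re\left(\frac{i e^{iuv}}{\exp(a e^{-iu})-1}\right)$ by multiplying numerator and denominator of the fraction $\frac{1}{\exp(a e^{-iu})-1}$ by the complex conjugate of the denominator, so as to separate real and imaginary parts cleanly. Writing $a e^{-iu} = a\cos(u) - i a \sin(u)$, the denominator becomes $\exp(a\cos(u))\bigl(\cos(a\sin(u)) - i\sin(a\sin(u))\bigr) - 1$, and its modulus-squared is readily computed to be $\exp(2a\cos(u)) - 2\exp(a\cos(u))\cos(a\sin(u)) + 1$, which factors as $2\exp(a\cos(u))\bigl(\cosh(a\cos(u)) - \cos(a\sin(u))\bigr)$. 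This last identity is exactly what produces the denominator appearing on the right-hand side of the claimed formula.

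Next I would expand the numerator. The key observation is that taking the real part of $i$ times a complex number is the same as taking $-1$ times its imaginary part, that is, $\Re(iz) = -\Im(z)$. Thus $2\,\Re\left(\frac{i e^{iuv}}{\exp(a e^{-iu})-1}\right) = -2\,\Im\left(\frac{e^{iuv}}{\exp(a e^{-iu})-1}\right)$. This immediately reduces the problem to computing the imaginary part of the same expression whose real part was handled in Lemma \ref{dixit_maji_lemma 1}. I would therefore carry out the conjugate-multiplication once, collect the full complex fraction, and read off the imaginary part. Using $e^{iuv} = \cos(uv) + i\sin(uv)$ and the conjugated denominator $\exp(a\cos(u))\bigl(\cos(a\sin(u)) + i\sin(a\sin(u))\bigr) - 1$, a routine expansion of the product in the numerator, followed by extracting the coefficient of $i$ and applying the angle-addition formula $\sin(uv)\cos(a\sin(u)) + \cos(uv)\sin(a\sin(u)) = \sin(uv + a\sin(u))$, yields the numerator $-\sin(uv + a\sin(u)) + e^{-a\cos(u)}\sin(uv)$ after dividing through by the common factor $\exp(a\cos(u))$.

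Since the computation is entirely mechanical, the main obstacle is purely bookkeeping: keeping track of signs when extracting the imaginary part and correctly combining the two trigonometric terms via the sum-angle identity, together with confirming that the common factor $\exp(a\cos(u))$ cancels consistently between numerator and denominator to leave the stated form. There is no analytic difficulty here and no convergence issue, since $a,u,v$ are real and the identity is a pointwise algebraic equality valid whenever the denominator $\exp(a e^{-iu}) - 1$ is nonzero. I expect the cleanest route is to reuse the denominator factorization verbatim from the proof of Lemma \ref{dixit_maji_lemma 1} and then simply substitute $\Re(i z) = -\Im(z)$, so that the entire lemma follows from that one observation together with the real and imaginary decomposition already implicit in the previous proof.
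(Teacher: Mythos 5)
Your proposal is correct and takes essentially the same route as the paper's proof: a direct computation that multiplies numerator and denominator by the conjugate of $\exp(ae^{-iu})-1$, applies the angle-addition formula to obtain $\sin(uv+a\sin(u))$, and divides through by $e^{a\cos(u)}$ to produce the $\cosh(a\cos(u))-\cos(a\sin(u))$ denominator. The only cosmetic difference is that you extract the factor $i$ at the outset via $\Re(iz)=-\Im(z)$ and then read off an imaginary part, whereas the paper expands $ie^{iuv}=-\sin(uv)+i\cos(uv)$ and reads off the real part; the bookkeeping is otherwise identical.
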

\begin{proof}
The left hand side expression can be written as
\begin{align*}
2 \Re\left(\frac{ie^{iuv}}{ \exp(ae^{-iu})-1}\right)&=2 \Re\left(\frac{-\sin(uv)+i\cos(uv)}{e^{a\cos(u)}(\cos(a\sin(u))-i\sin(a\sin(u)))-1}\right),
\end{align*}
now multiplying the numerator and the denominator by the conjugate of the denominator reduces to 
\begin{align*}
& 2 \Re\left(\frac{(-\sin(uv)+i\cos(uv))({e^{a\cos(u)}\cos(a\sin(u))+ie^{a\cos(u)}\sin(a\sin(u)))-1})}{e^{2a\cos(u)}-2e^{a\cos(u)}(\cos(a\sin(u)))+1}\right) \\
&=2\left(\frac{-e^{a\cos(u)}(\cos(a\sin(u))\sin(uv))+\sin(uv)-e^{a\cos(u)}(\sin(a\sin(u))\cos(uv))}{e^{2a\cos(u)}-2e^{a\cos(u)}(\cos(a\sin(u)))+1}\right)\\
&=2\left(\frac{-e^{a\cos(u)}(\sin(uv+a\sin(u))+\sin(uv)}{e^{2a\cos(u)}-2e^{a\cos(u)}(\cos(a\sin(u)))+1}\right), \\
&  =\frac{-\sin(uv+a\sin(u))+e^{-a\cos(u)}\sin(uv)}{\cosh(a\cos(u))-\cos(a\sin(u))}.
\end{align*}
In the final step we divided by $e^{a\cos(u)}$ on the numerator as well as on the denominator.

\end{proof}

The next lemma will play a crucial role to obtain our main results.

\begin{lemma}\label{use in k even r even}
Let $s \in \mathbb{C}$ and $k\in \mathbb{N}$.  Then
\begin{align*}
 \frac{\sin(ks)}{\sin(s)}=\sum_{j=-(k-1)}^{(k-1)} {}^{''} \exp(ijs),
\end{align*}
where $''$ means summation runs over $ j=-(k-1),-(k-3), \cdots ,(k-3),(k-1)$.
Thus,  for $k$ even, 
\begin{equation}\label{use in k even r odd}
\frac{\sin(ks)}{\cos(s)}=(-1)^{\frac{k}{2}}\sum_{j=-(k-1)}^{(k-1)} {}^{''} \, i^j \exp (ijs),
\end{equation}
and for $k$ odd,
\begin{equation}\label{use in k odd r odd}
\frac{\cos(ks)}{\cos(s)}=(-1)^{\frac{k-1}{2}}\sum_{j=-(k-1)}^{(k-1)} {}^{''} \, i^j \exp (-ijs). 
\end{equation}
\end{lemma}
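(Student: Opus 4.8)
The plan is to establish the first identity as a finite geometric series, and then extract the two special cases by substituting $s \mapsto s + \pi/2$ and exploiting the resulting shift in the summand. The starting observation is that $\sin(ks)/\sin(s)$ is the familiar Dirichlet-type kernel. Writing $\sin(\theta) = (e^{i\theta} - e^{-i\theta})/(2i)$ in both numerator and denominator, I would form
\begin{align*}
\frac{\sin(ks)}{\sin(s)} = \frac{e^{iks} - e^{-iks}}{e^{is} - e^{-is}} = \frac{e^{-iks}\left(e^{2iks} - 1\right)}{e^{-is}\left(e^{2is} - 1\right)} = e^{-i(k-1)s}\,\frac{e^{2iks} - 1}{e^{2is} - 1}.
\end{align*}
The quotient $(e^{2iks}-1)/(e^{2is}-1)$ is the sum of the finite geometric series $\sum_{\ell=0}^{k-1} e^{2i\ell s}$, so multiplying through by the prefactor $e^{-i(k-1)s}$ gives $\sum_{\ell=0}^{k-1} e^{i(2\ell - (k-1))s}$. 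As $\ell$ runs from $0$ to $k-1$, the exponent index $j := 2\ell - (k-1)$ runs precisely over $-(k-1), -(k-3), \ldots, (k-3), (k-1)$, that is, over the values indicated by the double-prime summation convention. This yields the first identity directly.

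For the two special cases I would use the substitution $s \mapsto s + \tfrac{\pi}{2}$ in the established identity, which converts sines into cosines. Under this shift, $\sin(s+\tfrac{\pi}{2}) = \cos(s)$, while $\sin\!\big(k(s+\tfrac{\pi}{2})\big) = \sin(ks)\cos(\tfrac{k\pi}{2}) + \cos(ks)\sin(\tfrac{k\pi}{2})$; the parity of $k$ determines which term survives. When $k$ is even, $\cos(\tfrac{k\pi}{2}) = (-1)^{k/2}$ and $\sin(\tfrac{k\pi}{2}) = 0$, so the left side becomes $(-1)^{k/2}\sin(ks)/\cos(s)$. On the right side, each summand $\exp(ijs)$ becomes $\exp\!\big(ij(s+\tfrac{\pi}{2})\big) = e^{ij\pi/2}\exp(ijs) = i^{\,j}\exp(ijs)$. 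Rearranging the factor $(-1)^{k/2}$ to the right-hand side produces exactly \eqref{use in k even r odd}. When $k$ is odd, $\cos(\tfrac{k\pi}{2}) = 0$ and $\sin(\tfrac{k\pi}{2}) = (-1)^{(k-1)/2}$, so the left side becomes $(-1)^{(k-1)/2}\cos(ks)/\cos(s)$; the same substitution on the right introduces the factor $i^{\,j}$, and to match the stated form \eqref{use in k odd r odd} with $\exp(-ijs)$ one reindexes $j \mapsto -j$ (the summation range is symmetric about $0$, so this is harmless, and $i^{-j} = i^{\,j}$ fails in general—hence the reindexing is genuinely needed to align signs in the exponent rather than in the power of $i$).

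I do not anticipate a serious obstacle here, as the argument is elementary; the only point requiring care is the bookkeeping of the index set under the substitution and reindexing, and in particular verifying that the symmetric range $j = -(k-1), \ldots, (k-1)$ is preserved so that replacing $j$ by $-j$ in the odd case is legitimate. A secondary subtlety is confirming that the factor $i^{\,j}$ emerging from $e^{ij\pi/2}$ is correctly placed relative to the overall sign $(-1)^{k/2}$ or $(-1)^{(k-1)/2}$; careful tracking of these constants is what turns the clean geometric identity into the precise forms needed later in the main theorems.
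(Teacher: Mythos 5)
The paper itself states this lemma without proof (it is filed under ``well-known results''), so there is no in-paper argument to compare against; your proposal must stand on its own. Most of it does: the geometric-series computation $\sin(ks)/\sin(s)=e^{-i(k-1)s}\sum_{\ell=0}^{k-1}e^{2i\ell s}=\sum_{\ell=0}^{k-1}e^{i(2\ell-(k-1))s}$ is correct, the index $j=2\ell-(k-1)$ runs over exactly the double-prime set, and the shift $s\mapsto s+\tfrac{\pi}{2}$ with $\cos(k\pi/2)=(-1)^{k/2}$, $\sin(k\pi/2)=0$ for even $k$ yields \eqref{use in k even r odd} cleanly, the sign $(-1)^{k/2}$ moving across the equation because it equals its own reciprocal.

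The odd case, however, is left with a genuine loose end. After the substitution you correctly reach $\cos(ks)/\cos(s)=(-1)^{(k-1)/2}\sum_{j}{}^{''}\,i^{j}e^{ijs}$, and you reindex $j\mapsto-j$ to flip the sign in the exponent; but that reindexing simultaneously turns $i^{j}$ into $i^{-j}$, so what you actually obtain is $(-1)^{(k-1)/2}\sum_{j}{}^{''}\,i^{-j}e^{-ijs}$, which is not yet the stated $(-1)^{(k-1)/2}\sum_{j}{}^{''}\,i^{j}e^{-ijs}$. Your parenthetical remark that ``$i^{-j}=i^{j}$ fails in general'' is exactly the unresolved point: as written, the argument stops one step short, and the remark even suggests the needed identity is unavailable. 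It is available here, for a parity reason you never invoke: every index $j$ in the double-prime sum has the same parity as $k-1$, so for $k$ odd all the $j$ are even, and for even $j$ one has $i^{j}=(-1)^{j/2}=(-1)^{-j/2}=i^{-j}$. Adding that one observation closes the gap; without it, \eqref{use in k odd r odd} has not been derived. (A quick check with $k=3$, where the indices are $j=-2,0,2$, confirms both that the parity claim is what makes the two forms agree and that the final identity reads $\cos(3s)/\cos(s)=2\cos(2s)-1$, as it should.)
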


\section{Proof of main results}

\begin{proof}[Theorem {\rm \ref{k even r even}}][]
It is well-known that $e^{-x}$ is the inverse Mellin transform of the gamma function $\Gamma(s)$,  that is,  for any $x>0$, 
\begin{align}\label{inverse_Mellin_gamma}
e^{-x} = \frac{1}{2 \pi i } \int_{c-i \infty}^{c+ i \infty} \Gamma(s) x^{-s} \mathrm{d}s,
\end{align}
valid for any $c>0$. 
Replacing $x$ by $nx$ in \eqref{inverse_Mellin_gamma} and using \eqref{gen_D(k,r)}, one can easily show that
\begin{align}\label{infinite sum_interms_line integral}
 \sum_{n=1}^{\infty} D_{k,r}(n) e^{-nx}  = \frac{1}{2\pi i}\int_{c-i\infty}^{c+i\infty} \Gamma(s)\zeta(ks)\zeta(s-r)x^{-s}\textrm{d}s,
\end{align}
provided with $\Re(s)=c > \max( \frac{1}{k}, 1+r )$.
In order to evaluate this line integral we choose a contour $ \mathcal{C}$ in a way so that all the poles of the integrand function lie inside the contour $ \mathcal{C}$, 
 determined by the line segments $[c -i T, c + i T] ,[c+iT ,-\lambda+iT],  [-\lambda+ i T, -\lambda - i T]$,   and $[-\lambda - i T, c - i T]$,  where we wisely take $\lambda>\max(0,-r)$ and $T$ is some large positive real number. The essence of this choice of $\lambda$ is justified at the later stage in our proof.
 Now applying Cauchy's residue theorem, we have 
\begin{align}\label{Cauchy Residue Theorem}
 \frac{1}{2\pi i}  \int_{ c - i T }^{c + i T} +  \int_{ c + i T }^{-\lambda + i T} +
\int_{ -\lambda + i T }^{-\lambda - i T} +
 \int_{ -\lambda - i T }^{c - i T} 
& \Gamma(s) \zeta(ks) \zeta(s-r)  x^{-s}  {\rm d}s \nonumber \\
&= \sum_{\rho} \underset{s=\rho}{\Res} \, \Gamma(s) \zeta(ks) \zeta(s-r)  x^{-s},
\end{align}
where the sum over $\rho$ runs through all the poles of $\Gamma(s) \zeta(ks) \zeta(s-r)  x^{-s}$ inside the contour $\mathcal{C}$.

First, let us analyse the poles of the integrand function. 
We know that $\Gamma(s)$ has simple poles at $s=0$ and negative integers.  The poles at negative integers are getting cancelled by the trivial zeros of $\zeta(ks)$ as we are dealing with $k$ even.  Thus,  the only simple poles of the integrand function $\Gamma(s) \zeta(ks) \zeta(s-r)x^{-s}$ are at $s=0,  \frac{1}{k}$ and $1+r$,  due to the simple pole of $\zeta(ks)$ at $s=1/k$ and the pole of $\zeta(s-r)$ at $s=1+r$.
Here we would like to mention that,  as we have considered $r$ is any even integer,  so it may happen that $1+r$ is a negative integer,  and in that case,  it seems that $1+r$ is a pole of order $2$ due to the factor $\Gamma(s)\zeta(s-r)$, but $\zeta(ks)$ will have a  zero at $1+r$ since $k$ is even.  This shows that $1+r$ remains a simple pole of the integrand function as we are dealing with $k\geq 2$ is even and $r$ is even.  

%Although for $r<0,1+r$ is a pole of order 2 but because of the  trivial zeroes of zeta at negative integer one pole get cancelled and $s=1+r$ remains a simple pole.
One can easily evaluate residues at $s=0$ and $s= \frac{1}{k}$.  Let $R_{\rho}$ denotes the residue term at $\rho$.  Using residue calculation methods,  we have
\begin{align}\label{R_0}
R_{0} =\lim_{s \rightarrow 0} s \Gamma(s)\zeta(ks)\zeta(s-r)x^{-s} 
=\zeta(0) \zeta(-r)
 = - \frac{1}{2} \zeta(-r).
\end{align}
Similarly,  we obtain
\begin{align}\label{Residue at 1/k}
 R_{\frac{1}{k}}=
 \lim_{s \rightarrow \frac{1}{k}} \left(s-\frac{1}{k} \right) \Gamma(s)\zeta(ks)\zeta(s-r)x^{-s}  
% = \lim_{s \rightarrow \frac{1}{k}} \frac{(sk-1)}{k}\Gamma(s)\zeta(ks)\zeta(s-r)x^{-s} 
%&=\lim_{s' \rightarrow 1}\frac{1}{k}(s'-1)\zeta(s')\Gamma\left(\frac{s'}{k}\right)\zeta\left(\frac{s'}{k} -r\right)x^{-\frac{s'}{k}} 
=\frac{1}{k}\Gamma\left(\frac{1}{k}\right)\zeta\left(\frac{1}{k} -r\right)x^{-\frac{1}{k}}.
\end{align}
%using functional equation of $\zeta(s)$ in asymmetric form \ref{Functional equation in asymmetric form}
%\begin{align}\label{residue 1/k in terms of zeta(1-s)}
% R_{\frac{1}{k}}=-\frac{(2 \pi)^{\frac{k+1}{k}} \zeta\left(\frac{-1}{k}\right)}{2\sin\left(\frac{\pi}{2k}\right)x^{\frac{1}{k}}}
%\end{align}
Here we note that while calculating the residue at $s=1+r$,  we have to make two different cases. 
First,  if $r\geq 0$,  then
\begin{align}\label{R_1+r}
R_{1+r}=\lim_{s \rightarrow 1+r} (s-(1+r)) \Gamma(s)\zeta(ks)\zeta(s-r)x^{-s} 
&=\Gamma(1+r)\zeta(k(1+r)) x^{-(1+r)} \nonumber \\
&= r! \zeta(k(1+r))x^{-(1+r)}.
\end{align}
Second,  if $r<0$ is a negative even integer,  then $1+r$ is also a negative integer. Thus,  $\Gamma(s)$ satisfy the Laurent series expansion \eqref{Laurent_gamma_1+r} at $1+r$.  Therefore,  we have
\begin{align}\label{residue_gamma_1+r}
\lim_{s \rightarrow 1+r} (s- 1-r) \Gamma(s) = \frac{(-1)^{1+r}}{(-(1+r))!}.
\end{align}
Again, $\zeta(ks)$ has a trivial zero at $s=1+r$ as $k$ is even,  so we have 
\begin{align}\label{residue_zeta(ks)_1+r}
\lim_{s \rightarrow 1+r} \frac{  \zeta(ks)}{(s- 1-r)} = k \zeta'(k(1+r)).
\end{align}
%And as $1+r$ is a simple pole of $\zeta(s-r)$,  we have
%\begin{align}
%\lim_{s \rightarrow 1+r} (s- 1-r)  \zeta(ks) = 1.
%\end{align}
Combining \eqref{residue_gamma_1+r} and \eqref{residue_zeta(ks)_1+r} and together with the fact that $1+r$ is a simple pole of $\zeta(s-r)$,  we obtain
\begin{align}\label{Res_1+r}
R_{1+r}  = \lim_{s \rightarrow (1+r)}(s -( 1+r)) \Gamma(s) \zeta(ks) \zeta(s-r)x^{-s} 
= \frac{(-1)^{1+r}}{(-(1+r))!} k \zeta'(k(1+r)) x^{-(1+r)}.
\end{align}
%%% Show that horizontal integrals vanish at $T \rightarrow \infty$.
We now proceed to show that the horizontal integrals 
 $$
 H_1(T,x):= \frac{1}{2\pi i} \int_{c+ i T }^{-\lambda + i T } \Gamma(s) \zeta(ks) \zeta(s -r) x^{-s} {\rm d}s 
 $$ 
 and 
 $$
 H_2(T, x)= \frac{1}{2\pi i} \int_{-\lambda- i T }^{c - i T } \Gamma(s) \zeta(ks) \zeta(s-r)x^{-s} {\rm d}s 
 $$
   vanish as $T \rightarrow \infty$.
  Replace $s$ by $\sigma+iT$ in $H_1(T,x)$ to see
    \begin{align*}
    |H_1(T,x) | & =\left|\frac{1}{2\pi i} \int_{c}^{d} \Gamma(\sigma+iT) \zeta(k\sigma+ikT) \zeta(\sigma+iT-r) x^{-\sigma+iT} {\rm d}\sigma \right| \\
    &  \ll  \int_{c}^{d}|\Gamma(\sigma+iT)\zeta(k \sigma+ikT) \zeta(\sigma+iT-r) |x^{-\sigma} {\rm d}\sigma \\
    & \ll |T|^A \exp\left( - \frac{\pi}{2} |T| \right),
   \end{align*}
for some constant $A$.  In the final step, we used Stirling's formula for $\Gamma(s)$ i.e.,  Lemma \ref{Stirling} and bound for $\zeta(s)$ i.e.,  Lemma \ref{boundzeta}. 
 This immediately implies that $H_1(T, x)$ vanishes as $T \rightarrow \infty$.  Similarly,  one can show that $H_2(T, x)$ also vanishes as $T \rightarrow \infty$.  Now letting $T\rightarrow \infty$ in \eqref{Cauchy Residue Theorem} and collecting all the residual terms,  we have 
\begin{align}\label{application_CRT}
 \frac{1}{2\pi i}  \int_{ (c)  } 
\Gamma(s) \zeta(ks) \zeta(s-r) x^{-s}  {\rm d}s 
 & = R_{0} +R_{\frac{1}{k}} + R_{1+r}  \nonumber \\
 & +\frac{1}{2\pi i} \int_{ (-\lambda) }
\Gamma(s) \zeta(ks) \zeta(s-r) x^{-s}  {\rm d}s.
\end{align}
Here and throughout the paper we denote $\int_{(c)}$ by $\int_{c-i\infty}^{c+i\infty} $.
Now one of our main objectives is to evaluate the following vertical integral 
\begin{align}\label{Vertical V(x)}
V_{k,r}(x):= \int_{ (-\lambda )} 
\Gamma(s) \zeta(ks) \zeta(s-r) x^{-s}  {\rm d}s. 
\end{align}
Utilize the asymmetric form \eqref{Functional equation in asymmetric form} of the functional equation of $\zeta(s)$ to write
\begin{align}
\zeta(ks) & =2^{ks} \pi^{ks-1}\zeta(1-ks)\Gamma(1-ks) \sin\left(\frac{\pi k s}{2} \right),  \label{zeta(ks)}\\
\zeta(s-r) & =2^{s-r} \pi^{s-r-1}\zeta(1-s+r)\Gamma(1-s+r) \sin \left(\frac{\pi(s-r)}{2} \right).\label{zeta(s-r)}
\end{align}
Substituting \eqref{zeta(ks)} and \eqref{zeta(s-r)}  in \eqref{Vertical V(x)},  the vertical integral becomes
\begin{align*}
V_{k,r}(x)= \frac{1}{(2\pi)^{r} \pi^{2}} \frac{1}{2\pi i}\int_{ (-\lambda)} &  \Gamma(s)\Gamma(1-ks)\Gamma(1-s+r)\zeta(1-ks)\zeta(1-s+r)\\
& \times \sin\left(\frac{\pi ks}{2}\right)\sin \left(\frac{\pi(s-r)}{2}\right)\left(\frac{(2\pi)^{k+1}}{x}\right)^{s} \textrm{d}s.
\end{align*}
Now to shift the line of integration,  we replace $s$ by $1-s$.  Then we have
\begin{align}\label{common expression for left vertical integral} 
 V_{k,r}(x)= \frac{(2\pi)^{k+1-r}}{ \pi^{2} x} \frac{1}{2\pi i}  & \int_{(1+\lambda)} \Gamma(1-s)\Gamma(1-k+ks)\Gamma(s+r)\zeta(1-k+ks)\zeta(s+r)\nonumber\\
\times &  \sin\left(\frac{\pi k(1-s)}{2}\right)\sin \left(\frac{\pi(1-s-r)}{2}\right)\left(\frac{(2\pi)^{k+1}}{x}\right)^{-s} \textrm{d}s.
\end{align}
Since $k$ and $ r$ are both even,  the following identities hold
\begin{align}
 \sin\left(\frac{\pi k(1-s)}{2}\right) &=(-1)^{\frac{k}{2}-1}\sin\left(\frac{ks\pi}{2}\right),  \label{sin_k_even}\\
\sin \left(\frac{\pi(1-s-r)}{2}\right) &= (-1)^{\frac{r}{2}} \cos\left( \frac{\pi s}{2} \right). \label{sin_r_even}
\end{align}
Using the above two expressions in \eqref{common expression for left vertical integral},  we see
\begin{align}
V_{k,r}(x)= \frac{(-1)^{\frac{k+r}{2}-1}(2\pi)^{k+1-r}}{ \pi^{2} x} \frac{1}{2\pi i} &   \int_{(1+\lambda)}  \Gamma(1-s)\Gamma(1-k+ks)\Gamma(s+r)\zeta(1-k+ks)\nonumber\\
\times & \zeta(s+r)  \sin\left(\frac{\pi k s}{2}\right)\cos \left(\frac{\pi s}{2}\right)\left(\frac{(2\pi)^{k+1}}{x}\right)^{-s} \textrm{d}s \nonumber \\
=\frac{(-1)^{\frac{k+r}{2}-1}(2\pi)^{k+1-r}}{ 2 \pi^{2} x} \frac{1}{2\pi i}  &   \int_{(1+\lambda)}  \Gamma(1-s)\Gamma(1-k+ks)\Gamma(s+r)\zeta(1-k+ks)\nonumber\\
\times & \zeta(s+r)  \frac{\sin\left(\frac{\pi k s}{2}\right)}{\sin \left(\frac{\pi s}{2}\right)} \sin(\pi s)\left(\frac{(2\pi)^{k+1}}{x}\right)^{-s} \textrm{d}s \nonumber \\
=\frac{(-1)^{\frac{k+r}{2}-1}(2\pi)^{k+1-r}}{ 2 \pi x} \frac{1}{2\pi i}  &  \int_{(1+\lambda)}   \frac{\Gamma(1-k+ks)\Gamma(s+r)}{ \Gamma(s)}\zeta(1-k+ks)\nonumber\\
\times & \zeta(s+r)  \frac{\sin\left(\frac{\pi k s}{2}\right)}{\sin \left(\frac{\pi s}{2}\right)} \left(\frac{(2\pi)^{k+1}}{x}\right)^{-s} \textrm{d}s. \label{V_k}
\end{align}
Here in the ultimate step, we have used Euler's reflection formula \eqref{Euler reflection formula} for $\Gamma(s)$ and in the penultimate step we multiplied by $\sin\left( \frac{\pi s}{2}  \right)$ in the denominator as well as in the numerator.  Now we notice that $\Re(s+r) = 1+\lambda+r >1$ and $\Re(1-k+ks)= 1+k \lambda >1$ as we have considered $\lambda >\max(0, -r)$.  This explains our choice of $\lambda$. Thus,  we can express $\zeta(1-k+ks)\zeta(s+r)$ as an infinite series,  mainly,  use  \eqref{gen_S(k,r)} and then interchange the summation and integration in \eqref{V_k} to deduce that
\begin{align}\label{V(k,r) in terms I(k,r)}
V_{k,r}(x) = \frac{(-1)^{\frac{k+r}{2}-1}(2\pi)^{k-r}}{ x} \sum_{n=1}^{\infty} S_{k,r}(n) I_{k,r}(n,x),
\end{align}
where
\begin{align}\label{I(k,r)}
I_{k,r}(n, x):=
\frac{1}{2\pi i}  \int_{(1+\lambda)}    \frac{\Gamma(1-k+ks)\Gamma(s+r)}{ \Gamma(s)}
   \frac{\sin\left(\frac{\pi k s}{2}\right)}{\sin \left(\frac{\pi s}{2}\right)} \left(\frac{(2\pi)^{k+1} n}{x}\right)^{-s} \textrm{d}s. 
\end{align}
Now our main goal is to simplify this integral.  Invoking Lemma \ref{use in k even r even} in \eqref{I(k,r)} we see that 
\begin{align}\label{Final I_k,r}
I_{k,r}(n, x)=  \sum_{j=-(k-1)}^{(k-1)} {}^{''}
\frac{1}{2\pi i}  \int_{(1+\lambda)}    \frac{\Gamma(1-k+ks)\Gamma(s+r)}{ \Gamma(s)} 
  \left(\frac{e^{-\frac{i \pi j}{2}} (2\pi)^{k+1} n}{x}\right)^{-s} \textrm{d}s. 
\end{align}
Employing multiplication formula \eqref{multiplication} for $\Gamma(s)$, one can derive that
\begin{align}\label{Use of multiplication}
\frac{\Gamma(1-k+ks)}{\Gamma(s)}= \frac{k^{ks}}{k^{k-\frac{1}{2}(2\pi )^{\frac{k-1}{2}}}} \prod_{l=1}^{k-1} \Gamma\left( s- \frac{l}{k} \right).
\end{align}
Substituting \eqref{Use of multiplication} in \eqref{Final I_k,r} yields that 
\begin{align}\label{I_(k,r) before MeijerG}
I_{k,r}(n, x)=  \frac{1}{k^{k-\frac{1}{2}} (2\pi )^{\frac{k-1}{2}}} \sum_{j=-(k-1)}^{(k-1)} {}^{''}
\frac{1}{2\pi i}  \int_{(1+\lambda)}    \Gamma(s+r) \prod_{l=1}^{k-1} \Gamma\left( s- \frac{l}{k} \right)
 X(j)^{-s} \textrm{d}s,
\end{align}
where 
\begin{align}\label{X(j)_defn}
 X(j)=\frac{e^{-\frac{i\pi j}{2}} (2\pi)^{k+1}n}{k^{k} x}
 \end{align} 
is exactly same as defined in \eqref{X(j)}.  Now we shall try to express, the line integral  in  \eqref{I_(k,r) before MeijerG},  in terms of the Meijer $G$-function and for that we have to verify all the necessary conditions for the convergence of the integral.  First,  one can check that all the poles of the integrand function lie on the left of the line of integration $\Re(s)=1+\lambda$ since $\lambda > \max(0, -r)$.  Now using the definition \eqref{MeijerG} of the Meijer $G$-function,  with $m=q=k$ and $n=p=0$ and $b_1=r,  b_2=-\frac{1}{k}, b_3= -\frac{2}{k}, \cdots, b_k= -\frac{k-1}{k}$,  we find that
\begin{align}\label{I_(k,r)_MeijerG}
\frac{1}{2\pi i}  \int_{(1+\lambda)}    \Gamma(s+r) \prod_{l=1}^{k-1} \Gamma\left( s- \frac{l}{k} \right)
 X(j)^{-s} \textrm{d}s= G_{0,k}^{\,k,0} \!\left(  \,\begin{matrix}\{ \}\\r, -\frac{1}{k},\cdots ,-\frac{(k-1)}{k} \end{matrix} \; \Big| X(j)  \right),
\end{align}
convergent since $p+q < 2(m+n)$ and $|\arg(X(j))|= |\frac{\pi j}{2}| < (m+n - \frac{p+q}{2})= \frac{\pi k}{2}$ as $|j|\leq k-1$.
Substituting \eqref{I_(k,r)_MeijerG} in \eqref{I_(k,r) before MeijerG}, we arrive at
\begin{align}\label{I_(k,r)_Final}
I_{k,r}(n, x)=  \frac{1}{k^{k-\frac{1}{2}} (2\pi )^{\frac{k-1}{2}}} \sum_{j=-(k-1)}^{(k-1)} {}^{''} G_{0,k}^{\,k,0} \!\left(  \,\begin{matrix} \{ \} \\r, -\frac{1}{k},\cdots ,-\frac{(k-1)}{k} \end{matrix} \; \Big| X(j)  \right).
\end{align}
Now plugging this final expression  \eqref{I_(k,r)_Final} of $I_{k,r}(n,x)$ in \eqref{V(k,r) in terms I(k,r)},  the left vertical integral becomes
\begin{align}\label{Final_left vertical}
V_{k,r}(x) = \frac{(-1)^{\frac{k+r}{2}-1}(2\pi)^{\frac{k+1}{2}-r}}{ k^{k-\frac{1}{2}} x}  \sum_{j=-(k-1)}^{(k-1)} {}^{''}   \sum_{n=1}^{\infty} S_{k,r}(n) G_{0,k}^{\,k,0} \!\left(  \,\begin{matrix} \{ \} \\r, -\frac{1}{k},\cdots ,-\frac{(k-1)}{k} \end{matrix} \; \Big| X(j)  \right).
\end{align}
At this moment,  we must show that the infinite series
\begin{align}\label{S_(k,r)_MeijerG}
 \sum_{n=1}^{\infty} S_{k,r}(n) G_{0,k}^{\,k,0} \!\left(  \,\begin{matrix} \{ \} \\r, -\frac{1}{k},\cdots ,-\frac{(k-1)}{k} \end{matrix} \; \Big| X(j)  \right)
\end{align}
is convergent for any fixed $k \geq 1,  r\in \mathbb{Z},  |j| \leq k-1$.  To show the convergence of this series, we shall use the integral representation \eqref{I_(k,r)_MeijerG} of the Meijer $G$-function. 
Employing Stirling's formula \eqref{stirling equn} on each gamma factor that is present in \eqref{I_(k,r)_MeijerG},  letting $s= 1+\lambda+iT$,  we have
\begin{align*}
|\Gamma(s+r)| & = |\Gamma(1+\lambda+r+iT)= O\left( |T|^{\lambda+r+\frac{1}{2}}e^{-\frac{\pi}{2}|T|} \right), \\
\left|\Gamma\left( s- \frac{l}{k} \right) \right| & = \left|\Gamma\left( 1+\lambda- \frac{l}{k} +iT\right) \right| = O\left( |T|^{\lambda- \frac{l}{k}+\frac{1}{2}}e^{-\frac{\pi}{2}|T|} \right), \,\, {\rm for}\,\, 1\leq l \leq k-1,
\end{align*}
as $T \rightarrow \infty$.  Using the definition \eqref{X(j)_defn} of $X(j)$,  we note that
\begin{align*}
|X(j)^{-s}| = \left| \left( \frac{e^{-\frac{i\pi j}{2}} (2\pi)^{k+1}n}{x k^{k}}\right)^{-1-\lambda-i T}  \right| =O_{k,x} \left( \frac{1}{n^{1+\lambda}} e^{ \frac{ \pi |j| |T|}{2}} \right).
\end{align*}
Utilizing the above bounds for gamma functions and the bound for $X(j)$,  and upon simplification,  we  derive that
\begin{align}\label{bound_MeijerG}
\left| G_{0,k}^{\,k,0} \!\left(  \,\begin{matrix} \{ \}\\r, -\frac{1}{k},\cdots ,-\frac{(k-1)}{k} \end{matrix} \; \Big| X(j)  \right) \right|  & \ll O_{k,x} \left(  \frac{1}{n^{1+\lambda}} \int_{-\infty}^{\infty} |T|^{k \lambda+ r +\frac{1}{2}} e^{ -\frac{\pi}{2} (k-|j|)|T|} \mathrm{d}T \right) \nonumber \\
& \ll O_{k,r,x} \left(  \frac{1}{n^{1+\lambda}} \right).
\end{align}
The integral present inside the above big-oh bound is convergent since $|j|\leq k-1$ and $k\lambda +r+ \frac{3}{2}$ is a positive quantity as we have $\lambda > \max(0, -r)$.  Plugging  \eqref{bound_MeijerG} in \eqref{S_(k,r)_MeijerG},  we can see that the infinite series \eqref{S_(k,r)_MeijerG} is convergent since the Dirichlet series $\sum_{n=1}^{\infty} S_{k,r}(n) n^{-(1+\lambda)}$ is convergent.

%and that can be verified from \eqref{gen_S(k,r)}. 
Finally,  combining \eqref{infinite sum_interms_line integral},  \eqref{application_CRT},  \eqref{Vertical V(x)}, and \eqref{Final_left vertical},  and together with all the residual terms \eqref{R_0}, \eqref{Residue at 1/k} \eqref{R_1+r},  and \eqref{Res_1+r},  we complete the proof of Theorem \ref{k even r even}. 

\end{proof}

\subsection{Recovering Ramanujan's formula for $\zeta(1/2)$}
\begin{proof}[Corollary {\rm \ref{zeta(1/2)}}][]
Substituting $k=2$ and $r=0$ in Theorem \ref{k even r even},  one can see that
\begin{align*}
\sum_{n=1}^{\infty} D_{2,0}(n) e^{-nx} & = \frac{1}{4}+\frac{\sqrt{\pi}}{2\sqrt{x}}\zeta\left(\frac{1}{2}\right)+\frac{\pi^2}{6 x}   \\
& + \frac{ \pi^{3/2}}{x} \sum_{n=1}^{\infty} S_{2,0}(n) \left[ G_{0,2}^{\,2,0} \!\left(  \,\begin{matrix} \{ \}\\0,-\frac{1}{2} \end{matrix} \; \Big| \frac{e^{-\frac{i \pi}{2}} 2n \pi^3}{x}   \right) + G_{0,2}^{\,2,0} \!\left(  \,\begin{matrix} \{ \}\\0,-\frac{1}{2} \end{matrix} \; \Big| \frac{e^{\frac{i \pi}{2}} 2n \pi^3}{x}   \right) \right]. 
\end{align*}
Observe that the Meijer $G$-functions that are present in the above equation are conjugate to each other.  Thus,  letting $x= \alpha$ and $\alpha \beta = 4 \pi^3$,  and using \eqref{D(k,r) for r positive},  we obtain
\begin{align}\label{Rama_zeta(1/2)}
\sum_{n=1}^{\infty}\frac{1}{e^{n^{2}\alpha}-1} & = \frac{1}{4}+\frac{\sqrt{\pi}}{2\sqrt{\alpha}}\zeta\left(\frac{1}{2}\right)+\frac{\pi^2}{6 \alpha} \nonumber \\
& + \frac{ 2 \pi^{3/2}}{\alpha} \sum_{n=1}^{\infty} S_{2,0}(n) \Re \left[ G_{0,2}^{\,2,0} \!\left(  \,\begin{matrix} \{ \}\\0,-\frac{1}{2} \end{matrix} \; \Big| \frac{e^{-\frac{i \pi}{2}} n \beta
}{2}   \right) \right].
\end{align}
Recall Lemma \ref{MeijerG(k,0,0,k)} with $k=2 $ and $b=-\frac{1}{2}$,  to produce
\begin{align}\label{applic_MeijerG}
G_{0,2}^{\,2,0} \!\left(  \,\begin{matrix} \{ \}\\0,-\frac{1}{2} \end{matrix} \; \Big| \frac{e^{-\frac{i \pi}{2}} n \beta
}{2}   \right) = \sqrt{\frac{2\pi}{n \beta}} e^{ \frac{i\pi}{4} } \exp \left(- \sqrt{2n\beta} e^{-\frac{i \pi}{4}}\right).
\end{align}
Now utilizing \eqref{applic_MeijerG} in \eqref{Rama_zeta(1/2)}, and using the definition \eqref{definition of D_{k,r}} of $S_{2,0}(n)$,   we see that
\begin{align}\label{before_Rama_zeta(1/2)}
\sum_{n=1}^{\infty}\frac{1}{e^{n^{2}\alpha}-1} & = \frac{1}{4}+\frac{\sqrt{\pi}}{2\sqrt{\alpha}}\zeta\left(\frac{1}{2}\right)+\frac{\pi^2}{6 \alpha} \nonumber \\
& + \sqrt{\frac{2 \pi}{\alpha}} \sum_{n=1}^{\infty} \sum_{d^2|n} \frac{d}{\sqrt{n}} \Re \left[ e^{ \frac{i\pi}{4} } \exp \left(- \sqrt{2n\beta} e^{-\frac{i \pi}{4}}\right) \right].
\end{align}
Next aim is to simplify the infinite series
\begin{align*}
\sum_{n=1}^{\infty} \sum_{d^2|n} \frac{d}{\sqrt{n}} \Re \left[ e^{ \frac{i\pi}{4} } \exp \left(- \sqrt{2n\beta} e^{-\frac{i \pi}{4}}\right) \right]. 
\end{align*}
Writing $n=d^2 m$,  one  can simplify the infinite sum in a following way
\begin{align}
& \sum_{n=1}^{\infty} \sum_{d^2|n} \frac{d}{\sqrt{n}} \Re \left[ e^{ \frac{i\pi}{4} } \exp \left(- \sqrt{2n\beta} e^{-\frac{i \pi}{4}}\right) \right] \nonumber \\
&= \sum_{m=1}^{\infty} \sum_{d=1}^{\infty} \frac{1}{\sqrt{m}} \Re \left[ e^{ \frac{i\pi}{4} } \exp \left(- \sqrt{2m\beta} d e^{-\frac{i \pi}{4}}\right) \right] \nonumber \\
&= \sum_{m=1}^{\infty}  \frac{1}{\sqrt{m}} \Re \left[  \frac{ e^{ \frac{i\pi}{4} }}{\exp \left( \sqrt{2m\beta}  e^{-\frac{i \pi}{4}}\right) -1 } \right] \nonumber \\
&= \sum_{m=1}^{\infty}  \frac{1}{ 2\sqrt{2m}}  \left( \frac{\cos(\sqrt{m\beta})-\sin(\sqrt{m\beta})-e^{-\sqrt{m\beta}}} { ( \cosh(\sqrt{m\beta})-\cos(\sqrt{m\beta}))} \right). \label{cos(mbeta)}
\end{align}
In the final step,  we have used Lemma \ref{dixit_maji_lemma 1} with $u=\pi/4,  v=1$, and $  a=\sqrt{2m\beta}$.  Now substituting \eqref{cos(mbeta)} in \eqref{before_Rama_zeta(1/2)},  one can finish the proof of \eqref{Ramanujan_zeta(1/2)}. 

 \end{proof}
 
\subsection{Wigert's formula for $\zeta(1/k)$ for $k \geq 2$ even.}
 
 \begin{proof}
First,  substituting $r=0$ in Theorem \ref{k even r even}, we find that
\begin{align}\label{r=0}
\sum_{n=1}^{\infty} D_{k,0}(n) e^{- nx} & = \frac{1}{4} + \frac{1}{k}\Gamma\left(\frac{1}{k}\right)\zeta\left(\frac{1}{k}\right)x^{-\frac{1}{k}} + \frac{ \zeta(k)}{x}  \nonumber \\
& + \frac{(-1)^{\frac{k-2}{2}} (2\pi)^{\frac{k+1}{2}}}{k^{\frac{2k-1}{2}} x} \sum_{j=-(k-1)}^{(k-1)} {}^{''} \sum_{n=1}^{\infty} S_{k,0}(n) \; G_{0,k}^{\,k,0} \!\left(  \,\begin{matrix}\{\}\\ 0,-\frac{1}{k},\cdots ,-\frac{(k-1)}{k} \end{matrix} \; \Big| X(j)   \right),
\end{align}
where $X(j)=\frac{e^{-\frac{i\pi j}{2}} (2\pi)^{k+1}n}{k^{k} x}.$
Now we shall try to simplify the last term and to do that we make use of Lemma \ref{MeijerG(k,0,0,k)}, with $b=-\frac{k-1}{k}$,  which gives rise to
\begin{align*}
 & \frac{(-1)^{\frac{k-2}{2}} (2\pi)^{\frac{k+1}{2}}}{k^{\frac{2k-1}{2}} x} G_{0,k}^{\,k,0} \!\left(  \,\begin{matrix}\{\}\\ 0,-\frac{1}{k},\cdots ,-\frac{(k-1)}{k} \end{matrix} \; \Big| X(j)   \right) \nonumber \\
 & = \frac{(-1)^{\frac{k}{2}-1}}{k} \left(  \frac{2\pi}{x} \right)^{\frac{1}{k}} e^{\frac{i \pi j(k-1)}{2k} } n^{\frac{1}{k}-1} \exp\left(- e^{-\frac{i \pi j}{2k}} (2 \pi)^{ \frac{1}{k} +1} \left( \frac{n}{x}\right)^{\frac{1}{k}}  \right).
\end{align*}
Implement this simplification of the Meijer $G$-function in the last term of \eqref{r=0}, and  use \eqref{D(k,r) for r positive} and the definition \eqref{definition of D_{k,r}} of $S_{k,0}(n)$,  then \eqref{r=0} becomes
\begin{align}\label{last step_Wigert}
\sum_{n=1}^{\infty}\frac{1}{e^{n^{k}x}-1} & = \frac{1}{4} + \frac{1}{k}\Gamma\left(\frac{1}{k}\right)\zeta\left(\frac{1}{k}\right)x^{-\frac{1}{k}} + \frac{ \zeta(k)}{x}  + \frac{(-1)^{\frac{k}{2}-1}}{k} \left(  \frac{2\pi}{x} \right)^{\frac{1}{k}} \nonumber \\
& \times  \sum_{j=-(k-1)}^{(k-1)} {}^{''} \sum_{n=1}^{\infty} \sum_{d^k|n} d^{k-1}  n^{\frac{1}{k}-1}  e^{\frac{i \pi j(k-1)}{2k} }\exp\left(- e^{-\frac{i \pi j}{2k}} (2 \pi)^{ \frac{1}{k} +1} \left( \frac{n}{x}\right)^{\frac{1}{k}}  \right).
\end{align}
Now writing $n= d^k m$ and upon simplification,  the last term reduces to
\begin{align}
&  \frac{(-1)^{\frac{k}{2}-1}}{k} \left(  \frac{2\pi}{x} \right)^{\frac{1}{k}} 
   \sum_{j=-(k-1)}^{(k-1)} {}^{''} \sum_{m=1}^{\infty} m^{\frac{1}{k}-1}   e^{\frac{i \pi j(k-1)}{2k} }  \sum_{d=1}^{\infty} \exp\left(- e^{-\frac{i \pi j}{2k}} (2 \pi)^{ \frac{1}{k} +1} \left( \frac{m}{x}\right)^{\frac{1}{k}} d  \right) \nonumber \\
 & =  \frac{(-1)^{\frac{k}{2}-1}}{k} \left(  \frac{2\pi}{x} \right)^{\frac{1}{k}} 
   \sum_{j=-(k-1)}^{(k-1)} {}^{''} e^{\frac{i \pi j(k-1)}{2k} }  \sum_{m=1}^{\infty}    \frac{m^{\frac{1}{k}-1} }{ \exp\left( e^{-\frac{i \pi j}{2k}} (2 \pi)^{ \frac{1}{k} +1} \left( \frac{m}{x}\right)^{\frac{1}{k}}   \right) -1} \nonumber  \\
   & = \frac{(-1)^{\frac{k}{2}-1}}{k} \left(  \frac{2\pi}{x} \right)^{\frac{1}{k}} \sum_{j=0}^{\frac{k}{2}-1} \Bigg[ e^{\frac{i \pi (2j+1)(k-1)}{2k} }  \sum_{m=1}^{\infty}    \frac{m^{\frac{1}{k}-1} }{ \exp\left( e^{-\frac{i \pi (2j+1)}{2k}} (2 \pi)^{ \frac{1}{k} +1} \left( \frac{m}{x}\right)^{\frac{1}{k}}   \right) -1} \nonumber  \\
 & \hspace{4cm} +  e^{-\frac{i \pi (2j+1)(k-1)}{2k} }  \sum_{m=1}^{\infty}    \frac{m^{\frac{1}{k}-1} }{ \exp\left( e^{\frac{i \pi (2j+1)}{2k}} (2 \pi)^{ \frac{1}{k} +1} \left( \frac{m}{x}\right)^{\frac{1}{k}}   \right) -1} \Bigg].  \label{last term}
\end{align}
Ultimately,  plugging \eqref{last term} in \eqref{last step_Wigert},  we complete the proof of Wigert's formula  \eqref{Wigert} for $\zeta(\frac{1}{k})$, $k \geq 2$ even. 

\end{proof}

 \begin{proof}[Theorem {\rm \ref{k even r odd}}][]
 The proof  goes in accordance with the proof of Theorem \ref{k even r even},  so we mention those steps where it differs from Theorem \ref{k even r even}.  Note that,  in this case,  we are dealing with $k\geq 2$ even and $r\neq -1$ is any odd integer.  
 First, we point out that,  the  analysis of poles will be exactly same as in Theorem \ref{k even r even}, so won't repeat it here.  The only changes will be in the calculation of the vertical integral of $V_{k,r}(x)$.  One can recall that,  while calculating $V_{k,r}(x)$,  the equation \eqref{common expression for left vertical integral} depends on $k$ and $r$.
%  \eqref{sin_k_even} and \eqref{sin_r_even}.  The latter two equations depend on $k$ and $r$.  
   Since $k$ is even,  the equation \eqref{sin_k_even} will remain same.  For clarity,  we mention it once again.  Mainly,  the equation \eqref{sin_k_even} is
 \begin{align}
 \sin\left(\frac{\pi k(1-s)}{2}\right) &=(-1)^{\frac{k}{2}-1}\sin\left(\frac{ks\pi}{2}\right),  \label{sin_k_even_1}
 \end{align}
 whereas the equation \eqref{sin_r_even} changes to
 \begin{align}
\sin \left(\frac{\pi(1-s-r)}{2}\right) &= (-1)^{\frac{r+1}{2}} \sin\left( \frac{\pi s}{2} \right), \label{sin_r_odd}
\end{align}
as $r$ is odd.
In view of \eqref{sin_k_even_1} and \eqref{sin_r_odd},   the vertical integral  \eqref{common expression for left vertical integral} reduces to
\begin{align}\label{V(k,r)_k even_r odd}
V_{k,r}(x)= \frac{(-1)^{\frac{k+r+1}{2}-1}(2\pi)^{k+1-r}}{ \pi^{2} x} \frac{1}{2\pi i}    \int_{(1+\lambda)} & \Gamma(1-s)\Gamma(1-k+ks)\Gamma(s+r)\zeta(1-k+ks)\nonumber\\
\times & \zeta(s+r)  \sin\left(\frac{\pi k s}{2}\right)\sin \left(\frac{\pi s}{2}\right)\left(\frac{(2\pi)^{k+1}}{x}\right)^{-s} \textrm{d}s.
\end{align}
 Here we remark that the only changes happened in the power of $-1$ and $\cos \left(\frac{\pi s}{2}\right)$ got replaced by $\sin \left(\frac{\pi s}{2}\right)$.  At this point,  to simplify further, we multiply and divide by $\cos \left(\frac{\pi s}{2}\right)$ in \eqref{V(k,r)_k even_r odd}.  Then using Euler's reflection formula and simplifying, we arrive at
 \begin{align*}
 V_{k,r}(x)= \frac{(-1)^{\frac{k+r+1}{2}-1}(2\pi)^{k-r}}{  x} \frac{1}{2\pi i}    \int_{(1+\lambda)} & \frac{\Gamma(1-k+ks)\Gamma(s+r)}{ \Gamma(s)}\zeta(1-k+ks)\nonumber\\
\times & \zeta(s+r) \frac{ \sin\left(\frac{\pi k s}{2}\right) }{\cos \left(\frac{\pi s}{2}\right)}\left(\frac{(2\pi)^{k+1}}{x}\right)^{-s} \textrm{d}s.
 \end{align*}
% Here one can compare the above expression with the equation \eqref{V_k}.  
 Now we shall make use of \eqref{use in k even r odd}, that is,
 \begin{align*}
 \frac{ \sin\left(\frac{\pi k s}{2}\right) }{\cos \left(\frac{\pi s}{2}\right)} =(-1)^{\frac{k}{2}}\sum_{j=-(k-1)}^{(k-1)} {}^{''} \, i^j \exp \left( \frac{i \pi js}{2} \right).
 \end{align*}
 This is one of the crucial changes in this proof.  Substituting this expression,  one can show that
  \begin{align*}
 V_{k,r}(x)= \frac{(-1)^{\frac{2k+r+1}{2}-1}(2\pi)^{k-r}}{  x}  \sum_{j=-(k-1)}^{(k-1)} {}^{''} \, i^j  \,\,   \frac{1}{2\pi i}    \int_{(1+\lambda)} & \frac{\Gamma(1-k+ks)\Gamma(s+r)}{ \Gamma(s)}\zeta(1-k+ks)\nonumber\\
\times & \zeta(s+r) \left( \frac{  e^{-\frac{i \pi j}{2}}  (2\pi)^{k+1}}{x}\right)^{-s} \textrm{d}s.
 \end{align*}
 From here onwards,  the analysis of simplifying $V_{k,r}(x)$ is exactly same as in Theorem \ref{k even r even},  so we avoid reproducing the calculations.  If we continue the calculations  along the same spirit of Theorem \ref{k even r even},  then the vertical integral takes the shape of
 \begin{align}\label{final_vertical_k even_r odd}
 V_{k,r}(x)= \frac{(-1)^{\frac{2k+r-1}{2}} (2\pi)^{\frac{k+1-2r}{2}}}{x\,k^{\frac{2k-1}{2}}} 
   \sum_{j=-(k-1)}^{(k-1)} {}^{''} i^j \sum_{n=1}^{\infty} S_{k,r}(n) G_{0,k}^{\,k,0} \!\left(  \,\begin{matrix}\{\}\\r, -\frac{1}{k},\cdots ,-\frac{(k-1)}{k} \end{matrix} \; \Big|X(j)  \right).
 \end{align}
 Now \eqref{final_vertical_k even_r odd} and  along with all the residual terms in Theorem \ref{k even r even},  one can complete the proof of Theorem \ref{k even r odd}.

 \end{proof}
 
 \subsection{A new identity for $\zeta(-1/2)$.}

 First, let us state a lemma which will be useful in proving the next corollary.
\begin{lemma}\label{zeta(-1/2)_lemma}
Let $b$ and $u$ be two real numbers.  Then
\begin{align*}
 \Re\Bigg[ \frac{i \exp( b e^{i u} )}{ \left( \exp(b e^{i u}) -1 \right)^2}  \Bigg] 
= \frac{1}{2} \frac{\sin( b \sin u ) \sinh( b \cos u)}{ \left( \cos( b \sin u) \cosh( b \cos u) -1  \right)^2 + \left(\sin( b \sin u ) \sinh( b \cos u)  \right)^2   }.
\end{align*}
\end{lemma}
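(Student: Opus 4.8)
The plan is to carry out a direct computation by separating the expression into real and imaginary parts, exactly in the spirit of the proof of Lemma \ref{our lemma 1}. First I would write $z = b e^{iu} = \alpha + i\beta$ with $\alpha = b\cos u$ and $\beta = b\sin u$, and record the expansion
\begin{align*}
e^{z}-1 = P + iQ, \qquad P = e^{\alpha}\cos\beta - 1, \quad Q = e^{\alpha}\sin\beta .
\end{align*}
Setting $ie^{z}= A + iB$ with $A = -e^{\alpha}\sin\beta$ and $B = e^{\alpha}\cos\beta$, and $(e^{z}-1)^2 = C + iD$ with $C = P^2 - Q^2$ and $D = 2PQ$, the quantity to be computed reduces, upon multiplying numerator and denominator by $\overline{(e^{z}-1)^2}$, to
\begin{align*}
\Re\left[\frac{ie^{z}}{(e^{z}-1)^2}\right] = \frac{AC + BD}{C^2 + D^2} .
\end{align*}

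For the denominator I would use $C^2 + D^2 = (P^2+Q^2)^2$ and the elementary simplification
\begin{align*}
P^2 + Q^2 = e^{2\alpha} - 2e^{\alpha}\cos\beta + 1 = 2e^{\alpha}\left(\cosh\alpha - \cos\beta\right),
\end{align*}
so that $C^2 + D^2 = 4 e^{2\alpha}(\cosh\alpha-\cos\beta)^2$. For the numerator, the key observation is that, pulling out the common factor $e^{\alpha}$,
\begin{align*}
AC + BD = e^{\alpha}\left(\cos\beta\,\Im\!\left[(e^{z}-1)^2\right] - \sin\beta\,\Re\!\left[(e^{z}-1)^2\right]\right) = e^{\alpha}\,\Im\!\left[e^{-i\beta}(e^{z}-1)^2\right].
\end{align*}
Expanding $e^{-i\beta}(e^{\alpha+i\beta}-1)^2 = e^{2\alpha+i\beta} - 2e^{\alpha} + e^{-i\beta}$ and taking imaginary parts makes everything collapse to
\begin{align*}
AC + BD = e^{\alpha}\sin\beta\left(e^{2\alpha}-1\right) = 2 e^{2\alpha}\sin\beta\sinh\alpha,
\end{align*}
whence dividing gives $\dfrac{AC+BD}{C^2+D^2} = \dfrac{\sin\beta\,\sinh\alpha}{2(\cosh\alpha-\cos\beta)^2}$.

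The final step is to rewrite the denominator in the asserted form. Expanding the right-hand side of the claim and using $\cosh^2\alpha = 1 + \sinh^2\alpha$ together with $\cos^2\beta+\sin^2\beta = 1$ yields the identity
\begin{align*}
(\cos\beta\cosh\alpha - 1)^2 + (\sin\beta\sinh\alpha)^2 = (\cosh\alpha - \cos\beta)^2,
\end{align*}
after which substituting $\alpha = b\cos u$ and $\beta = b\sin u$ reproduces exactly the claimed expression. I expect the only delicate point to be the numerator simplification: one must track the signs carefully so that the trigonometric and hyperbolic contributions cancel down to the single term $2e^{2\alpha}\sin\beta\sinh\alpha$, and the factorization through $e^{-i\beta}$ indicated above is precisely what makes this collapse transparent. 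Everything else is routine bookkeeping with the identities $\cosh^2-\sinh^2 = 1$ and $\cos^2 + \sin^2 = 1$.
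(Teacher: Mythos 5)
Your proof is correct, and it follows essentially the same route the paper intends: the paper omits the proof, remarking only that it ``goes along the same vein as the proof of Lemma \ref{our lemma 1}'' (i.e.\ multiply numerator and denominator by the conjugate of the denominator and simplify), which is exactly your computation, organized a bit more cleanly via the factorization $AC+BD = e^{\alpha}\,\Im\bigl[e^{-i\beta}(e^{z}-1)^{2}\bigr]$. Both of your key simplifications check out: the numerator does collapse to $2e^{2\alpha}\sin\beta\sinh\alpha$, and the closing identity $(\cos\beta\cosh\alpha-1)^{2}+(\sin\beta\sinh\alpha)^{2}=(\cosh\alpha-\cos\beta)^{2}$ is precisely what reconciles your denominator with the form stated in the lemma.
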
 
 The proof of this lemma goes along the same vein as the proof of Lemma \ref{our lemma 1},  so we left to readers to verify.

\begin{proof}[Corollary {\rm \ref{Formula for zeta(-1/2)}}][]

It is well-known that $\zeta(-1)=-\frac{1}{12}$ and $\zeta(4)=\frac{\pi^4}{90}$.  Letting $k=2$ and $r=1$, Theorem \ref{k even r odd} yields that
\begin{align}
\sum_{n=1}^{\infty} D_{2,1}(n) e^{-n x} & = \frac{1}{24}+\frac{\sqrt{\pi}}{2 \sqrt{x}}\zeta\left(-\frac{1}{2} \right)+\frac{\pi^{4}}{90x^2} \nonumber \\
& + \frac{\sqrt{\pi}}{2 x} \sum_{n=1}^\infty S_{2,1}(n) \Bigg[ i G_{0,2}^{\,2,0} \!\left(  \,\begin{matrix}\{\}\\1,-\frac{1}{2} \end{matrix} \; \Big| X(1) \right)-  i G_{0,2}^{\,2,0} \!\left(  \,\begin{matrix}\{\}\\1,-\frac{1}{2} \end{matrix} \; \Big| X(-1) \right) \Bigg]. \label{k=2,r=1}
\end{align}
Recall Lemma \ref{MeijerG(2,0,0,2)}, with $b_1=1$ and $b_2=-\frac{1}{2}$,  and use \cite[p.~444,  Equation (10.2.17)]{abra} the fact that 
\begin{align*}
 K_{\frac{3}{2}}(2\sqrt{z})=\frac{e^{-2\sqrt{z}} \sqrt{\pi} \left(1+\frac{1}{2\sqrt{z}}\right)}{2z^{\frac{1}{4}}},
 \end{align*}
to deduce that
\begin{align}\label{Simplified_MeijerG(2,0,0,2)_1}
G_{0,2}^{\,2,0} \!\left(  \,\begin{matrix}\{\}\\1,\frac{-1}{2} \end{matrix} \; \Big| X(j)   \right) \Bigg]= \sqrt{\pi} e^{-2 \sqrt{X(j)} } \left(  1+ \frac{1}{2 \sqrt{X(j)}} \right),
\end{align}
 where 
 \begin{align*}
 2 \sqrt{X(j)}= 2\pi \sqrt{ \frac{2\pi n}{x}} e^{- \frac{i \pi j}{4}}.
 \end{align*}
From \eqref{D(k,r) for r positive},  we see
\begin{align}\label{D(2,1)_series}
\sum_{n=1}^{\infty} D_{2,1}(n) e^{-n x} = \sum_{n=1}^{\infty}\frac{1}{n^{2}}\frac{\mathrm{d}}{\mathrm{d}x}\left(\frac{1}{1-e^{n^{2}x}}\right).
\end{align}
Simplifying the definition \eqref{definition of D_{k,r}} of $S_{2,1}(n)$, one can write
\begin{align}\label{S(2,1)}
S_{2,1}(n)  = \sum_{n=d^2 m} \frac{d}{m}.
\end{align}
Now plugging \eqref{Simplified_MeijerG(2,0,0,2)_1}, \eqref{D(2,1)_series},  and \eqref{S(2,1)} in \eqref{k=2,r=1},  we derive that
\begin{align}\label{formula_zeta(-1/2)}
\sum_{n=1}^{\infty}\frac{1}{n^{2}}\frac{\mathrm{d}}{\mathrm{d}x}\left(\frac{1}{1-e^{n^{2}x}}\right)
& =  \frac{1}{24}+\frac{\sqrt{\pi}}{2 \sqrt{x}}\zeta\left(-\frac{1}{2} \right)+\frac{\pi^{4}}{90x^2} \nonumber \\
&+ \frac{\pi}{ x} \sum_{m=1}^\infty \frac{1}{m}    \sum_{d=1}^{\infty} \Re \Bigg[ i d e^{-\theta d \sqrt{m}} \left( 1+  \frac{1}{\theta d \sqrt{m}}   \right) \Bigg],
\end{align}
where $\theta= 2\pi \sqrt{ \frac{2\pi }{x}} e^{- \frac{i \pi }{4}}.$
Next  aim is to decipher the  sum:
\begin{align*}
& \Re\Bigg[ i \sum_{d=1}^\infty d e^{-\theta d \sqrt{m}} + \frac{i}{\theta \sqrt{m}} e^{-\theta d \sqrt{m}} \Bigg] \\
& = \Re\Bigg[ \frac{ i\, e^{\theta  \sqrt{m}}}{ \left( e^{\theta  \sqrt{m}}-1 \right)^2} +  \frac{i}{ \theta \sqrt{m} \left( e^{\theta  \sqrt{m}}-1 \right)}  \Bigg].
\end{align*}
%As $\Re(\theta)>0$,  we have
%\begin{align*}
%\sum_{d=1}^\infty e^{-\theta d \sqrt{m}} = \frac{1}{e^{\theta  \sqrt{m}}-1} \\
%\Rightarrow \sum_{d=1}^\infty  d e^{-\theta d \sqrt{m}} =  \frac{e^{\theta  \sqrt{m}}}{ \left( e^{\theta  \sqrt{m}}-1 \right)^2}
%\end{align*}
Let us write 
\begin{align*}
A(\theta)= \frac{ i\, e^{\theta  \sqrt{m}}}{ \left( e^{\theta  \sqrt{m}}-1 \right)^2},  \quad {\rm and}\quad  B(\theta) = \frac{i}{ \theta \sqrt{m} \left( e^{\theta  \sqrt{m}}-1 \right)}.
\end{align*}
To extract real part of $A(\theta)$, we need to use Lemma \ref{zeta(-1/2)_lemma}.   
We write $\theta \sqrt{m}= b e^{- \frac{i \pi}{4}}$,  where $b= 2\pi \sqrt{ \frac{2\pi m}{x}} $.
Then,  employing Lemma \ref{zeta(-1/2)_lemma},  with $u=- \frac{ \pi}{4}$,  we get
\begin{align}\label{real part_A(theta)}
\Re[A(\theta)] &  = \Re\Bigg[ \frac{ i\,  e^{b e^{- \frac{i \pi}{4}}}}{ \left( e^{ b e^{- \frac{i \pi}{4}}}-1 \right)^2} \Bigg]   \nonumber \\
& = - \frac{1}{2} \frac{\sin\left(  \frac{b}{\sqrt{2} } \right) \sinh\left(  \frac{b}{\sqrt{2} } \right) }{ \left( \cos\left(  \frac{b}{\sqrt{2} } \right)  \cosh\left(  \frac{b}{\sqrt{2} } \right)  -1  \right)^2 + \left(\sin\left(  \frac{b}{\sqrt{2} } \right)  \sinh\left(  \frac{b}{\sqrt{2} } \right)  \right)^2   }.
\end{align}
Again,  to find the real part of $B(\theta)$,  we shall use Lemma \ref{our lemma 1},  with $u= \frac{\pi}{4}$ and $v=1$.  Thus, we have
\begin{align}\label{real part_B(theta)}
\Re[ B(\theta) ] 
& = \frac{1}{b}  \Re \Bigg[ \frac{i e^{\frac{i \pi}{4}}}{  e^{ b e^{-\frac{i \pi}{4}}}-1 } \Bigg] \nonumber \\
& =  -\frac{1}{4} \frac{\sin \left(  \frac{b}{\sqrt{2} }\right) + \cos \left(  \frac{b}{\sqrt{2} }\right)-e^{- \frac{b}{\sqrt{2} }}}{ \frac{b}{\sqrt{2}} \left( \cosh\left(  \frac{b}{\sqrt{2} }\right)-\cos\left(  \frac{b}{\sqrt{2} } \right) \right)}.
\end{align}
Now in view of \eqref{real part_A(theta)} and \eqref{real part_B(theta)},  the equation \eqref{formula_zeta(-1/2)} becomes
\begin{align*}
\sum_{n=1}^{\infty}\frac{1}{n^{2}}\frac{\mathrm{d}}{\mathrm{d}x}\left(\frac{1}{1-e^{n^{2}x}}\right)
& =  \frac{1}{24}+\frac{\sqrt{\pi}}{2 \sqrt{x}}\zeta\left(-\frac{1}{2} \right)+\frac{\pi^{4}}{90x^2} \nonumber \\
&- \frac{\pi}{ 4  x} \sum_{m=1}^\infty \frac{1}{m}  \Bigg[  \frac{\sin \left(  \frac{b}{\sqrt{2} }\right) + \cos \left(  \frac{b}{\sqrt{2} }\right)-e^{- \frac{b}{\sqrt{2} }}}{  \frac{b}{\sqrt{2}} \left( \cosh\left(  \frac{b}{\sqrt{2} }\right)-\cos\left(  \frac{b}{\sqrt{2} }\right) \right)} \\
& + \frac{ 2 \sin\left(  \frac{b}{\sqrt{2} } \right) \sinh\left(  \frac{b}{\sqrt{2} } \right) }{ \left( \cos\left(  \frac{b}{\sqrt{2} } \right)  \cosh\left(  \frac{b}{\sqrt{2} } \right)  -1  \right)^2 + \left(\sin\left(  \frac{b}{\sqrt{2} } \right)  \sinh\left(  \frac{b}{\sqrt{2} } \right)  \right)^2   } \Bigg]
\end{align*}
Finally,  replace $x$ by $\alpha$ and $\beta = \frac{4 \pi^3}{\alpha}$,  then one check that $\frac{b}{\sqrt{2}}= \sqrt{m \beta}$.  Plugging all these variables, we complete the proof of Corollary \eqref{Formula for zeta(-1/2)}. 

\end{proof}

\begin{proof}[Theorem {\rm {\ref{special case k even r=-1 }}}][]
In this theorem, we are concerned with $k\geq 2$ even and $r=-1$,  so the integrand function changes to 
\begin{align*}
F(s)=\Gamma(s) \zeta(ks) \zeta(s+1) x^{-s}. 
\end{align*}
One can easily show that the only poles of this integrand function are at $s=\frac{1}{k}$ and $s=0$.  We shall note that $s=0$ is a pole of order $2$,  where as $\frac{1}{k}$ remains a simple pole. Thus,  the residue at $\frac{1}{k}$ will be 
\begin{align*}
R_{1/k}= \frac{1}{k} \Gamma\left(\frac{1}{k}\right)\zeta\left(\frac{1}{k} +1\right)x^{-\frac{1}{k}}. 
\end{align*}
And the residue at $s=0$ will be 
\begin{align*}
R_{0}= \mathop{\rm Res}_{s=0} \Gamma(s) \zeta(ks) \zeta(s+1)x^{-s} & =\lim_{s \rightarrow 0} \frac{d}{\mathrm{d}s} \left[ s^2 \Gamma(s)\zeta(ks)\zeta(s+1)x^{-s} \right]. 
\end{align*}
From \eqref{Series expansion of Zeta} and \eqref{series expansion of Gamma around zero},  we write the Laurent series expansion at $s=0$ of each factor of the integrand function:
\begin{align*}
\Gamma(s) & = \frac{1}{s}-\gamma+\frac{1}{2}\left(\gamma^2 +\frac{\pi^2}{6}\right)s+ \cdots,\\
\zeta(ks) & = -\frac{1}{2} + k \zeta'(0) s + \cdots,  \\
\zeta(s+1) & = \frac{1}{s} + \gamma - \gamma_1 s + \cdots, \\
x^{-s} & = 1 - \log(x) s+ \cdots. 
\end{align*}
Multiplying these expansions,  one can find that the coefficient of $s$ in $s^2 \Gamma(s)\zeta(ks)\zeta(s+1)x^{-s}$ is $k\zeta'(0) +  \frac{\log(x)}{2}$.  Thus, the residue $R_0$ equals to
\begin{align}\label{residue_double pole at 0}
R_{0}= \frac{1}{2} \log \left( \frac{x}{(2\pi)^k} \right),
\end{align}
since $\zeta'(0)=-\frac{1}{2}\log(2\pi)$. 
The remaining part of the proof is exactly same as in Theorem \ref{k even r odd},  as we are dealing a particular case of when $k$ is even and $r$ is odd.  Hence,  considering the above residual terms, Theorem \ref{k even r odd}, with $r=-1$,  yields that
\begin{align}
 \sum_{n=1}^{\infty} D_{k,-1}(n) e^{-nx}&=\frac{1}{2}\log\left(\frac{x}{(2\pi)^k}\right)+\frac{1}{k} \Gamma\left(\frac{1}{k}\right)\zeta\left(\frac{1}{k} +1\right)x^{-\frac{1}{k}} +  \frac{(-1)^{k-1} (2\pi)^{\frac{k+3}{2}}}{ x\, k^{\frac{2k-1}{2}}} \nonumber \\
 & \times \sum_{j=-(k-1)}^{(k-1)} {}^{''} i^j \sum_{n=1}^{\infty} S_{k,-1}(n)  G_{0,k}^{\,k,0} \!\left(  \,\begin{matrix}\{\}\\-1,-\frac{1}{k},\cdots ,-\frac{(k-1)}{k} \end{matrix} \; \Big|X(j)  \right). \label{r=-1}
\end{align}
Here we invoke Lemma \ref{MeijerG(k,0,0,k)} with $b=-1$ to simplify that
\begin{align}\label{MeiG(k,0,0,k)}
G_{0,k}^{\,k,0} \!\left(  \,\begin{matrix}\{\}\\-1,-\frac{1}{k},\cdots ,-\frac{(k-1)}{k} \end{matrix} \; \Big|X(j)  \right) = \frac{(2\pi)^{\frac{k-1}{2}}}{\sqrt{k}} \frac{ \exp\left( -k X(j)^{\frac{1}{k}} \right)}{X(j)}.
\end{align}
Substituting \eqref{MeiG(k,0,0,k)} in \eqref{r=-1},  the right side infinite sum reduces to 
\begin{align}
&  \frac{(-1)^{k-1} (2\pi)^{k+1}}{ x\, k^k}
  \sum_{j=-(k-1)}^{(k-1)} {}^{''} i^j  \sum_{n=1}^{\infty} S_{k,-1}(n)  \frac{ \exp\left( -k X(j)^{\frac{1}{k}} \right)}{X(j)} \nonumber \\
  & = (-1)^{k-1}  \sum_{j=-(k-1)}^{(k-1)} {}^{''} i^j e^{ \frac{i \pi j}{2} } \sum_{n=1}^{\infty} \frac{ S_{k,-1}(n)}{n}   \exp \left( - e^{-\frac{i \pi j}{2k} } (2\pi)^{1+ \frac{1}{k} } \left( \frac{n}{x} \right)^{\frac{1}{k}}  \right) \nonumber \\
  & = (-1)^{k-1}  \sum_{j=-(k-1)}^{(k-1)} {}^{''}  e^{ i \pi j } \sum_{m=1}^{\infty} \sum_{d=1}^{\infty} \frac{1}{d} \exp \left( - e^{-\frac{i \pi j}{2k} } (2\pi)^{1+ \frac{1}{k} } d \left( \frac{m}{x} \right)^{\frac{1}{k}}  \right) \nonumber \\
  & = (-1)^k \sum_{j=-(k-1)}^{(k-1)} {}^{''}  e^{ i \pi j } \sum_{m=1}^{\infty}   \log \left[ 1 - \exp\left(-e^{-\frac{i \pi j}{2k} } (2\pi)^{1+ \frac{1}{k} } \left( \frac{m}{x} \right)^{\frac{1}{k}} \right) \right].  \label{last sum_r=-1}
\end{align}
In the penultimate step,  we have used the definition of $S_{k,-1}(n)$,  and in the final step we used the identity $-\log(1-y)= \sum_{m=1}^\infty \frac{y^m}{m}$ when $|y|<1$.  Finally,  in view \eqref{r=-1} and \eqref{last sum_r=-1},  we complete the proof of Theorem \ref{special case k even r=-1 }.  
\end{proof}

\begin{proof}[Theorem {\rm \ref{k odd r odd}}][]
Note that here we are dealing with $k\geq 1$ and $r\neq -1$ are both as an odd integer. 
Let us recall that the integrand function is 
\begin{align*}
F(s)= \Gamma(s) \zeta(ks) \zeta(s-r)x^{-s}.
\end{align*}
We shall divide the analysis of poles in two cases. 

{\bf Case I}: First,  we consider $r$ as an odd positive integer.  We know that $\Gamma(s)$ has simple poles at $0,-1,-2,-3,  \cdots$.  The poles at even negative integers are getting cancelled by the trivial zeros of $\zeta(ks)$,  whereas the poles at odd negative integers will be cancelled by $\zeta(s-r)$. 
Therefore,  in this case,  the only poles of the integrand function are at $s=0,  \frac{1}{k}$,  and $1+r$.  These are all simple poles and their corresponding residues have been already calculated in Theorem \ref{k even r even},  so we won't repeat it here. 

{\bf Case II}: We consider $r\neq -1$ as an odd negative integer.  One can easily check that $0$ and $\frac{1}{k}$ are simple poles in this case too. 
As $r \leq -3$ is odd,  so $1+r$ is  an even negative integer. Thus, $1+r$ is a pole of order $2$ of the factor $\Gamma(s) \zeta(s-r)$.  And we also note that $1+r$ is  a trivial zero of $\zeta(ks)$.   Therefore,   $1+r$ remains as a simple pole of the integrand function. 
 
An important point is to note that,  the poles of $\Gamma(s)$ at $s=-1,-3, \cdots,  r$ will not get neutralized by $\zeta(s-r)$, but the poles at the negative odd integers beyond $r$,  say $r-2,  r-4,  \cdots$ will be neutralized by $\zeta(s-r)$ since they are trivial zeros of $\zeta(s-r)$.  Therefore,  in this case, we have to take into account the residual terms coming from the contribution of the poles at $s=-1, -3, \cdots, r$.  

To calculate the residual term corresponding to $s=1+r$,  we have to use the Laurent series expansion of $\Gamma(s)$ i.e.,  Lemma \ref{Laurent_gamma_1+r},  and the Laurent series expansion \eqref{Series expansion of Zeta} of $\zeta(s)$ with $s$ replace by $s-r$.  Thus, we will have
\begin{align}
R_{1+r} & =\lim_{s\rightarrow 1+r} (s-(1+r)) \Gamma(s)  \zeta(ks) \zeta(s-r)x^{-s} \nonumber \\
 & = \lim_{s\rightarrow 1+r} (s-(1+r)) \Gamma(s) \frac{ \zeta(ks)}{(s-(1+r))} (s-(1+r)) \zeta(s-r)x^{-s} \nonumber \\
&  = \frac{(-1)^{1+r}}{(-(1+r))!} k \zeta'(k(1+r)) x^{-(1+r)}.  \label{at 1+r}
\end{align}
Let $R$ be the sum of the residual terms corresponding to the poles at $s=-1, -3, \cdots, r$.  Then,
we have
\begin{align}\label{all residue_negative integers}
R & =\sum_{i=0}^{-\frac{1+r}{2}} \mathop{\rm Res}_{s=-(2i+1)} \Gamma(s) \zeta(ks) \zeta(s-r)x^{-s} \nonumber \\
& = \sum_{i=0}^{-\frac{1+r}{2}}   \frac{-1}{(2i+1)!} \zeta(-k(2i+1)) \zeta(-2i-1-r) x^{2i+1} \nonumber \\
& = \frac{(-1)^{ \frac{1+r}{2} }}{2} \sum_{i=0}^{-\frac{1+r}{2}}  \frac{(-1)^{i+1} B_{k(2i+1)+1}}{(2i+1)! (k(2i+1)+1)  }  \frac{  B_{-2i-1-r}  (2\pi)^{-r  }}{ (-(2i+1+r))!} \left( \frac{x}{2\pi} \right)^{2i+1}.
\end{align}
The remaining part of the proof is in the same direction of the proof of Theorem \ref{k even r even},  so we  briefly mention all the steps. 
Mainly,  we will concentrate on the calculation of the vertical integral $V_{k,r}(x)$.  From \eqref{common expression for left vertical integral},  we have
\begin{align*}
 V_{k,r}(x)= \frac{(2\pi)^{k+1-r}}{ \pi^{2} x} \frac{1}{2\pi i}  & \int_{(1+\lambda)} \Gamma(1-s)\Gamma(1-k+ks)\Gamma(s+r)\zeta(1-k+ks)\zeta(s+r)\nonumber\\
\times &  \sin\left(\frac{\pi k(1-s)}{2}\right)\sin \left(\frac{\pi(1-s-r)}{2}\right)\left(\frac{(2\pi)^{k+1}}{x}\right)^{-s} \textrm{d}s.
\end{align*}
Since $k $ and $r$ both odd,  so we must use 
\begin{align*}
 \sin\left(\frac{\pi k(1-s)}{2}\right) & =(-1)^{\frac{k-1}{2}}\cos\left(\frac{k \pi s}{2}\right), \\
\sin \left(\frac{\pi(1-s-r)}{2}\right) &= (-1)^{\frac{r+1}{2}} \sin\left( \frac{\pi s}{2} \right).
\end{align*}
Use these two trigonometric identities to see
\begin{align*}
V_{k,r}(x)= \frac{ (-1)^{\frac{k+r}{2}}(2\pi)^{k+1-r}}{ \pi^{2} x} \frac{1}{2\pi i}   \int_{(1+\lambda)} & \Gamma(1-s)\Gamma(1-k+ks)\Gamma(s+r)\zeta(1-k+ks)\zeta(s+r)\nonumber\\
\times &  \cos\left(\frac{k \pi s}{2}\right)  \sin\left( \frac{\pi s}{2} \right) \left(\frac{(2\pi)^{k+1}}{x}\right)^{-s} \textrm{d}s.
\end{align*}
Here we multiply and divide by $ \cos\left( \frac{\pi s}{2} \right)$ and then use Euler's reflection formula to deduce that
\begin{align*}
V_{k,r}(x)= \frac{ (-1)^{\frac{k+r}{2}}(2\pi)^{k-r}}{ x} \frac{1}{2\pi i}   \int_{(1+\lambda)} & \frac{\Gamma(1-k+ks)\Gamma(s+r)}{\Gamma(s)} \zeta(1-k+ks)\zeta(s+r)\nonumber\\
\times &  \frac{ \cos\left(\frac{k \pi s}{2}\right) }{  \cos\left( \frac{\pi s}{2} \right) } \left(\frac{(2\pi)^{k+1}}{x}\right)^{-s} \textrm{d}s.
\end{align*}
Next,  we shall use \eqref{use in k odd r odd},  that is,
\begin{align*}
 \frac{ \cos\left(\frac{k \pi s}{2}\right) }{  \cos\left( \frac{\pi s}{2} \right) } =(-1)^{\frac{k-1}{2}}\sum_{j=-(k-1)}^{(k-1)} {}^{''} \, i^j \exp\left(- \frac{i\pi j s}{2} \right). 
\end{align*}
This is one of the important changes in this proof.  Plugging this expression in the above integral,  we obtain
\begin{align*}
V_{k,r}(x)= \frac{ (-1)^{\frac{2k+r-1}{2}}(2\pi)^{k-r}}{ x}  \sum_{j=-(k-1)}^{(k-1)} {}^{''} \, i^j  \,\,   \frac{1}{2\pi i}    \int_{(1+\lambda)} & \frac{\Gamma(1-k+ks)\Gamma(s+r)}{ \Gamma(s)}\zeta(1-k+ks)\nonumber\\
\times & \zeta(s+r) \left( \frac{  e^{\frac{i \pi j}{2}}  (2\pi)^{k+1}}{x}\right)^{-s} \textrm{d}s.
\end{align*}
Henceforth the simplification of the vertical integral $V_{k,r}(x)$ is exactly same as in Theorem \ref{k even r even}.  If we continue the calculations in the same spirit,  then the final expression of $V_{k,r}(x)$ will be
\begin{align}\label{final_vertical_k odd}
V_{k,r}(x)= \frac{ (-1)^{\frac{2k+r-1}{2}}(2\pi)^{\frac{k+1-2r}{2} }}{ x \,  k^{\frac{2k-1}{2}} }    \sum_{j=-(k-1)}^{(k-1)} {}^{''} \, i^j  \,\, \sum_{n=1}^{\infty} S_{k,r}(n)\; G_{0,k}^{\,k,0} \!\left(  \,\begin{matrix}\{\}\\r,-\frac{1}{k},\cdots ,-\frac{(k-1)}{k} \end{matrix} \; \Big| X(-j)   \right).
\end{align}
An important point is to note that the argument of the Meijer $G$-function is $X(-j)$. 
Finally,  collecting all the residual terms \eqref{at 1+r},  \eqref{all residue_negative integers}, and together with \eqref{final_vertical_k odd},  one can complete the proof of \eqref{Rama_generalization}.

\end{proof}
 
\begin{proof}[Corollary {\rm \ref{k odd r odd}}][]
Setting $r=-(2m+1)$,  with $m\geq 1$,  in Theorem \ref{k odd r odd},  and using the identity
\begin{align*}
\zeta'(-2km) = \frac{(-1)^{km}}{2} \frac{(2km)! \zeta(2km+1)}{(2\pi)^{2km}},
\end{align*}
we complete the proof.  
\end{proof}

\subsection{Derivation of Ramanujan's formula for odd zeta values}
\begin{proof}[Corollary {\rm {\ref{Ramanujan_odd zeta}}}][]
Plugging $k=1$ in Corollary \ref{k odd r -ve odd},  we deduce that
\begin{align}\label{Rama_k=1}
\sum_{n=1}^{\infty} D_{1,-(2m+1)}(n) e^{-nx} & =- \frac{1}{2}\zeta(2m+1) + \frac{\zeta\left(2m+2\right)}{x} 
 +  \frac{ (-1)^m}{2}  \zeta(2m+1)  \left( \frac{x}{2\pi} \right)^{2m} \nonumber \\ 
& + \frac{(-1)^m (2\pi)^{2m+1}}{2} \sum_{i=0}^m \frac{(-1)^{i+1} B_{2i+2} B_{2m-2i} }{(2i+2)! (2m-2i)!} \left( \frac{x}{2\pi} \right)^{2i+1} \nonumber \\
& + \frac{(-1)^m (2\pi)^{2m+2 }}{x } \sum_{n=1}^{\infty} S_{1,-(2m+1)}(n) G_{0,1}^{\,1,0} \!\left(  \,\begin{matrix}\{\}\\ -(2m+1) \end{matrix} \; \Big| \frac{(2\pi)^2 n}{x}\right).
\end{align}
From \eqref{relation_D_sigma},  we know 
\begin{align}\label{D_S}
D_{1,-(2m+1)}(n)= \sigma_{-(2m+1)}(n),  \quad {\rm and} \quad S_{1,-(2m+1)}(n)= \sigma_{2m+1}(n). 
\end{align}
The equation  \eqref{MeijerG(1,0,0,1)} yields that 
\begin{align}\label{special_MeijerG}
G_{0,1}^{\,1,0} \!\left(  \,\begin{matrix}\{\}\\ -(2m+1) \end{matrix} \; \Big| \frac{(2\pi)^2 n}{x}\right)= \left( \frac{x}{4 \pi^2 n} \right)^{2m+1 } e^{-\frac{4 \pi^2 n}{x} }. 
\end{align}
Now substituting \eqref{D_S} and \eqref{special_MeijerG} in \eqref{Rama_k=1},  one can derive that
\begin{align*}
 \sum_{n=1}^{\infty} \sigma_{-(2m+1)}(n) e^{-nx} + \frac{1}{2}\zeta(2m+1) & = \frac{\zeta\left(2m+2\right)}{x} 
 +  \frac{ (-1)^m}{2}  \zeta(2m+1)  \left( \frac{x}{2\pi} \right)^{2m} \nonumber \\ 
& + \frac{(-1)^m (2\pi)^{2m+1}}{2} \sum_{i=0}^m \frac{(-1)^{i+1} B_{2i+2} B_{2m-2i} }{(2i+2)! (2m-2i)!} \left( \frac{x}{2\pi} \right)^{2i+1} \nonumber \\
& + (-1)^m  \left( \frac{x}{2 \pi} \right)^{2m} \sum_{n=1}^{\infty} \frac{ \sigma_{2m+1}(n) }{n^{2m+1}} e^{-\frac{4 \pi^2 n}{x}}.
\end{align*}
Finally,  invoking Euler's formula \eqref{Euler_zeta(2m)} for $\zeta(2m+2)$ and using the fact that
$ \frac{ \sigma_{2m+1}(n)} {n^{2m+1}} =  \sigma_{-(2m+1)}(n) $
 and upon simplification,  one can derive  \eqref{Rama_odd zeta}. 
\end{proof}

\begin{proof}[Corollary {\rm \ref{special cases when m less than -1}}][]
Letting $k=1$ and $r=2m+1$ with $m\geq 1$ in Theorem \ref{k odd r odd},  and with the fact that 
$\zeta(-(2m+2))=0$ and $\zeta(-(2m+1))=- \frac{B_{2m+2}}{2m+2}$, 
we find that
\begin{align}
\sum_{n=1}^{\infty} D_{1,2m+1}(n) e^{-nx} & = \frac{B_{2m+2}}{4m+4} + (2m+1)! \frac{ \zeta(2m+2)}{x^{2m+2}} \nonumber \\
& + (-1)^{m+1} \frac{1}{x (2\pi)^{2m} } \sum_{n=1}^\infty S_{1, 2m+1}(n) \left( \frac{x}{4 \pi^2 n} \right)^{-(2m+1) } e^{-\frac{4 \pi^2 n}{x} } \nonumber  \\
 \Rightarrow \sum_{n=1}^{\infty}  \sigma_{2m+1}(n) e^{-nx} & = \frac{B_{2m+2}}{4m+4} + (-1)^m \left(\frac{2\pi}{x} \right)^{2m+2} \frac{B_{2m+2}}{4m+4} \nonumber \\
 & + (-1)^{m+1} \left(\frac{2\pi}{x} \right)^{2m+2} \sum_{n=1}^{\infty}  \sigma_{2m+1}(n) e^{- \frac{4 \pi^2n}{x}}.  \label{k=1, r=2m+1}
\end{align}
Here we have used \eqref{relation_D_sigma} to write $D_{1,2m+1}(n)$ and $S_{1,2m+1}(n) n^{2m+1}$ in terms of $\sigma_{2m+1}(n)$.  Finally,  to obtain \eqref{Rama_k=1_ r=2m+1}, replace $x=2 \alpha$ and $\alpha \beta =\pi^2$ in \eqref{k=1, r=2m+1}.

\end{proof}

Substituting $k=r=1$ in Theorem \ref{k odd r odd} and setting $x=2\alpha$ and $\alpha \beta =\pi^2$,   one can obtain the following identity of Schl\"{o}milch \cite{schlomilch}, and rediscovered by Ramanujan \cite[Ch. 14, Sec. 8, Cor. (i)]{ramnote}, \cite[p.~318]{lnb}.
\begin{corollary}\label{special case k=r=1}
 For $\alpha,\beta>0$ with $\alpha \beta=\pi^2$, 
\begin{align*}
\alpha \sum_{n=1}^{\infty} \frac{n}{e^{2n\alpha}-1}+\beta \sum_{n=1}^{\infty} \frac{n}{e^{2n\beta}-1}=\frac{\alpha+\beta}{24}-\frac{1}{4\alpha}
\end{align*}
In particular, if we consider  $\alpha=\beta=\pi$,  then
$$\sum_{n=1}^{\infty} \frac{n}{e^{2n\pi}-1}=\frac{1}{24}-\frac{1}{8\pi}.$$
\end{corollary}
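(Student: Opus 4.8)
The plan is to specialize Theorem~\ref{k odd r odd} at $k=r=1$ and then carry out the substitution $x=2\alpha$ with $\alpha\beta=\pi^2$. Both $k=1$ and $r=1\neq-1$ are odd, so the hypotheses hold. First I would identify every term on the right-hand side of \eqref{Rama_generalization}. By \eqref{relation_D_sigma} we have $D_{1,1}(n)=\sigma_1(n)$, so the Lambert series on the left collapses to $\sum_{n=1}^\infty n/(e^{nx}-1)$ via the elementary identity $\sum_{n=1}^\infty \sigma_1(n)e^{-nx}=\sum_{n=1}^\infty n/(e^{nx}-1)$ recorded in Section~2. The leading residual term is $-\tfrac12\zeta(-1)=\tfrac{1}{24}$; the gamma term becomes $\Gamma(1)\zeta(0)x^{-1}=-\tfrac{1}{2x}$; since $r=1\geq0$, the term $R_{1+r}$ from \eqref{R(1+r)} equals $1!\,\zeta(2)x^{-2}=\tfrac{\pi^2}{6x^2}$; and since $r\geq1$, the extra term $R$ vanishes.

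Next I would simplify the Meijer $G$-sum, which becomes elementary because $k=1$ forces the sum over $j$ to consist of the single index $j=0$ (with $i^{0}=1$). Here $X(0)=4\pi^2 n/x$ by \eqref{X(j)}, and \eqref{MeijerG(1,0,0,1)} with $b=r=1$ gives the value $\tfrac{4\pi^2 n}{x}\,e^{-4\pi^2 n/x}$. The prefactor in \eqref{Rama_generalization} reduces to $-1/x$ (its sign is $(-1)^{(2k+r-1)/2}=-1$ and the powers of $2\pi$ and $k$ are trivial at $k=r=1$), and by \eqref{relation_D_sigma} we have $S_{1,1}(n)=\sigma_{-1}(n)$. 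The crucial simplification is the identity $n\,\sigma_{-1}(n)=\sigma_1(n)$, which turns the entire Meijer $G$-sum into $-\tfrac{4\pi^2}{x^2}\sum_{n=1}^\infty \sigma_1(n)e^{-4\pi^2 n/x}$, i.e.\ again a Lambert series of the same shape.

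Collecting everything, Theorem~\ref{k odd r odd} at $k=r=1$ reads
\begin{align*}
\sum_{n=1}^\infty \frac{n}{e^{nx}-1}=\frac{1}{24}-\frac{1}{2x}+\frac{\pi^2}{6x^2}-\frac{4\pi^2}{x^2}\sum_{n=1}^\infty \frac{n}{e^{4\pi^2 n/x}-1}.
\end{align*}
Then I would set $x=2\alpha$ with $\alpha\beta=\pi^2$, so that $4\pi^2/x=2\beta$ and $\pi^2=\alpha\beta$; this turns $\tfrac{\pi^2}{6x^2}$ into $\tfrac{\beta}{24\alpha}$ and the exponential sum into $\tfrac{\beta}{\alpha}\sum_{n=1}^\infty n/(e^{2n\beta}-1)$. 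Multiplying through by $\alpha$ and moving the last term to the left yields
$\alpha\sum_{n=1}^\infty n/(e^{2n\alpha}-1)+\beta\sum_{n=1}^\infty n/(e^{2n\beta}-1)=\tfrac{\alpha+\beta}{24}-\tfrac14$, the desired symmetric identity; note that the symmetry of the left-hand side in $\alpha$ and $\beta$ is manifest in this form, and the particular case $\alpha=\beta=\pi$ then follows at once by dividing by $2\pi$.

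There is no deep obstacle in this argument, since the statement is a clean specialization of a theorem already established. The only points requiring genuine care are the bookkeeping of the constant $(-1)^{(2k+r-1)/2}$ together with the powers of $2\pi$ and $k$ in the prefactor, and the arithmetic observation $n\,\sigma_{-1}(n)=\sigma_1(n)$, which is exactly what returns the transformed exponential sum to a Lambert series of type $\sum n/(e^{2n\beta}-1)$ and thereby produces the $\alpha\leftrightarrow\beta$ symmetry of the final formula.
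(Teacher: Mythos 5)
Your proof is correct and follows exactly the route the paper indicates for this corollary: specialize Theorem \ref{k odd r odd} at $k=r=1$ (your term-by-term evaluations all check out — the residues $\tfrac{1}{24}$, $-\tfrac{1}{2x}$, $\tfrac{\pi^2}{6x^2}$, the vanishing of $R$, the prefactor $-1/x$, the single index $j=0$ with $X(0)=4\pi^2 n/x$, the Meijer $G$ value $\tfrac{4\pi^2 n}{x}e^{-4\pi^2 n/x}$, and the identity $n\,\sigma_{-1}(n)=\sigma_1(n)$) and then substitute $x=2\alpha$ with $\alpha\beta=\pi^2$. One remark: your final constant $-\tfrac14$ differs from the $-\tfrac{1}{4\alpha}$ printed in the corollary, and yours is the correct one — it is the only version consistent with the symmetry of the left-hand side in $\alpha$ and $\beta$ and with the stated particular case $\alpha=\beta=\pi$, which yields $\sum_{n\ge 1} n/(e^{2n\pi}-1)=\tfrac{1}{24}-\tfrac{1}{8\pi}$ — so the paper's displayed formula contains a typo that your derivation silently corrects.
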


\begin{proof}[Theorem {\rm {\ref{k odd r=-1}}}][]
The proof of this theorem travels in the same direction as of Theorem \ref{k odd r odd} since we are dealing $k$ odd and $r=-1$.  In this case,  the integrand function is 
\begin{align*}
F(s)= \Gamma(s) \zeta(ks) \zeta(s+1) x^{-s}.
\end{align*}
One can easily check that the integrand function has poles are at $s=0, \frac{1}{k}$ and $-1$.  At $s=0$, we have a pole order $2$,  but the other two points are simple pole.  In view of \eqref{residue_double pole at 0},  we know 
\begin{align*}
R_{0}= \frac{1}{2} \log \left( \frac{x}{(2\pi)^k} \right).
\end{align*}
One can find that 
\begin{align*}
R_{-1}= \frac{\zeta(-k)x}{2}. 
\end{align*}
Now along with the residue at $\frac{1}{k}$,  Theorem \ref{k odd r odd}, with $r=-1$, leads to 
\begin{align}
 \sum_{n=1}^{\infty} D_{k,-1}(n) e^{-nx}&=\frac{1}{2}\log\left(\frac{x}{(2\pi)^k}\right)+\frac{1}{k} \Gamma\left(\frac{1}{k}\right)\zeta\left(\frac{1}{k} +1\right)x^{-\frac{1}{k}} +  \frac{\zeta(-k)}{2} + \frac{ (2\pi)^{\frac{k+3}{2}}}{ x\, k^{\frac{2k-1}{2}}} \nonumber \\
 & \times \sum_{j=-(k-1)}^{(k-1)} {}^{''} i^j \sum_{n=1}^{\infty} S_{k,-1}(n) G_{0,k}^{\,k,0} \!\left(  \,\begin{matrix}\{\}\\-1,-\frac{1}{k},\cdots ,-\frac{(k-1)}{k} \end{matrix} \; \Big|X(-j)  \right). \label{k odd, r=-1}
\end{align}
Calculations of the above right hand side sum goes along the same line of \eqref{r=-1}.  For completeness,  we mention it briefly.  First,  employ Lemma \ref{MeijerG(k,0,0,k)} with $b=-1$ to see
\begin{align*}
G_{0,k}^{\,k,0} \!\left(  \,\begin{matrix}\{\}\\-1,-\frac{1}{k},\cdots ,-\frac{(k-1)}{k} \end{matrix} \; \Big|X(-j)  \right) = \frac{(2\pi)^{\frac{k-1}{2}}}{\sqrt{k}} \frac{ \exp\left( -k X(-j)^{\frac{1}{k}} \right)}{X(-j)}.
\end{align*}
Substituting it in \eqref{k odd, r=-1}, together with definition $\eqref{X(j)}$ of $X(j)$,  the right side infinite sum  takes the shape of  
\begin{align}
%&  \frac{ (2\pi)^{k+1}}{ x\, k^k}
%  \sum_{j=-(k-1)}^{(k-1)} {}^{''} i^j  \sum_{n=1}^{\infty} S_{k,-1}(n)  \frac{ \exp\left( -k X(-j)^{\frac{1}{k}} \right)}{X(-j)} \nonumber \\
  &   \sum_{j=-(k-1)}^{(k-1)} {}^{''} i^j e^{ -\frac{i \pi j}{2} } \sum_{n=1}^{\infty} \frac{ S_{k,-1}(n)}{n}   \exp \left( - e^{\frac{i \pi j}{2k} } (2\pi)^{1+ \frac{1}{k} } \left( \frac{n}{x} \right)^{\frac{1}{k}}  \right) \nonumber \\
  & =  \sum_{j=-(k-1)}^{(k-1)} {}^{''}   \sum_{m=1}^{\infty} \sum_{d=1}^{\infty} \frac{1}{d} \exp \left( - e^{\frac{i \pi j}{2k} } (2\pi)^{1+ \frac{1}{k} } d \left( \frac{m}{x} \right)^{\frac{1}{k}}  \right) \nonumber \\
  & = - \sum_{j=-(k-1)}^{(k-1)} {}^{''}  \sum_{m=1}^{\infty}   \log \left[ 1 - \exp\left(-e^{\frac{i \pi j}{2k} } (2\pi)^{1+ \frac{1}{k} } \left( \frac{m}{x} \right)^{\frac{1}{k}} \right) \right].  \label{last sum_k odd r=-1}
\end{align}
Now substituting \eqref{last sum_k odd r=-1} in \eqref{k odd, r=-1},  one can finish the proof of \eqref{gen_dekekind}.
\end{proof}

\begin{proof}[Corollary {\rm \ref{formula for logarithm Dedekind eta function}}][]
Plugging $k=1$ in \eqref{gen_dekekind},  we arrive at
\begin{align*}
& \sum_{n=1}^{\infty} D_{1,-1}(n) e^{-nx}= \frac{1}{2} \log\left( \frac{x}{2\pi} \right)+ \frac{\pi^2}{6 x} - \frac{x}{24} 
 -  \sum_{m=1}^\infty \log \left[1- \exp\left(-  \frac{4 \pi^2 m}{x} \right)  \right] \nonumber \\
& \Rightarrow   \sum_{n=1}^{\infty} \sum_{d|n} \frac{1}{d} e^{-nx} = \frac{1}{2} \log\left( \frac{x}{2\pi} \right)+ \frac{\pi^2}{6 x} - \frac{x}{24} +  \sum_{m=1}^\infty  \sum_{d=1}^\infty \frac{1}{d} e^{ -\frac{4 \pi^2 m d}{x}}  \nonumber\\
& \Rightarrow \sum_{n=1}^\infty  \frac{1}{n(e^{nx}-1)} =  \frac{1}{2} \log\left( \frac{x}{2\pi} \right)+ \frac{\pi^2}{6 x} - \frac{x}{24} +  \sum_{n=1}^\infty  \frac{1}{n \left( e^{ \frac{4 \pi^2 n}{x}}-1 \right)}.
 \end{align*}
Finally,  replacing $x= 2\alpha$ and $\beta= \pi^2/\alpha$ and simplifying we  obtain \eqref{Dedekind}. 
\end{proof}

\section{Concluding Remarks}

%In a recent work,  Dixit and the second author studied transformation formula for a generalized Lambert series of Ramanujan:
%\begin{align}
%\sum_{n=1}^{\infty} \frac{n^{N-2h}}{\exp(n^N x) - 1} =  \frac{1}{2\pi i} \int_{c_0-i \infty}^{c_0+ i \infty} \Gamma(s) \zeta(s) \zeta(Ns-(N-2h)) x^{-s} \mathrm{d}s,
%\end{align}
%where $N \in \mathbb{N}, h \in \mathbb{Z}$. 

%Motivated from the work of Dixit and the second author, 
On page 332,  Ramanujan recorded an intriguing formula for $\zeta(1/2)$ and also mentioned the following interesting  infinite series
\begin{align*}
\sum_{n=1} \frac{n^r}{\exp( n^N x) -1},
\end{align*}
where $N\in \mathbb{N} , r\in \mathbb{Z}$ and $N-r$ is any even integer.  This infinite series can be written as 
\begin{align*}
 \frac{1}{2\pi i}\int_{c-i\infty}^{c+i\infty} \Gamma(s)\zeta(s)\zeta(Ns-r)x^{-s}\textrm{d}s,
\end{align*}
for $c >\max\left( 1, \frac{1+r}{N}\right)$.  Recently,  Dixit and the second author obtained a beautiful generalization of Ramanujan's formula for odd zeta values while studying the above integral. 

In the current paper,  we have studied a variant of the above integral,  
\begin{align*}
  \sum_{n=1}^{\infty} D_{k,r}(n) e^{-nx}  = \frac{1}{2\pi i}\int_{c-i\infty}^{c+i\infty} \Gamma(s)\zeta(ks)\zeta(s-r)x^{-s}\textrm{d}s,
\end{align*}
where for $k\in \mathbb{N},  r \in \mathbb{Z}$, and $c > \max\left(  1/k, 1+r \right)$.  The analysis of this integral can be divided into the following four cases. 

{\bf Case 1:} When $k$ and $r$ are both are even integers, Theorem \ref{k even r even} gives us a transformation formula,  which allows us to derive Ramanujan's formula for $\zeta(1/2)$ and Wigert's formula for $\zeta(1/k)$.  
%We also obtained a new identity for $\zeta(-3/2)$. 

{\bf Case 2:} When $k$ is even and $r$ is odd,  i.e., $k-r$ is an odd integer. In this case, Theorem \ref{k even r odd} allows us to obtain a new identity for $\zeta(-1/2)$ analogous to Ramanujan's formula for $\zeta(1/2)$. 

{\bf Case 3:} When $k$ and $r$  are both odd,  i.e., $k-r$ is an even integer.  This is the most interesting case.  Theorem \ref{k odd r odd} allows us to derive Ramanujan's formula for odd zeta values. Corollary \ref{k odd r -ve odd} can be conceived as a one-variable generalization of Ramanujan's formula for odd zeta values,  which connects two odd zeta values $\zeta(2m+1)$, and $\zeta(2km+1)$ and a zeta value at a rational argument $\zeta\left( \frac{1}{k}+ 2m+1 \right)$.

{\bf Case 4:} When $k$ is  odd and $r$ is even,  i.e., $k-r$ is an odd integer.  In this case,  the integrand function will have infinitely many poles, namely,  at all odd negative integers.  In the current paper,  we have not discussed this case.  It would be interesting to study this case too.

Ramanujan's formula \eqref{Ramanujan's formula} for $\zeta(2m+1)$ has been generalized by many mathematicians in various directions.  Ramanujan himself gave a huge generalization \cite[Entry 20, p.~430]{bcbIV}.   Berndt \cite{berndt77} derived Euler's formula  for $\zeta(2m)$ as well as Ramanujan's formula for $\zeta(2m+1)$ from a single transformation formula for a generalized Eisenstein series.  The Dirichlet $L$-function $L(s,\chi)$ and the Hurwitz zeta function $\zeta(s,a)$ are natural generalizations of the Riemann zeta function.  A character analogue of Ramanujan's formula was obtained by Merrill \cite{merrill} and more generally, for any periodic function was obtained by Bradley \cite{bradley}.  Dixit,  Kumar,  Gupta and the second author \cite{DGKM20} found a generalization for Hurwitz zeta function,  which allowed them to connect many odd zeta values from a single formula.  Dixit and Gupta \cite{DG19} established an interesting analogue of Ramanujan's formula which connects square of odd zeta values.  Very recently,  Banerjee and Kumar \cite{BK21} found an analogue of Ramanujan's formula  \eqref{Ramanujan's formula}  over an  imaginary quadratic fields.  
For more references on the generalizations of Ramanujan's formula,  readers are encouraged to see \cite{berndtstraubzeta}.

\textbf{Acknowledgements.}
The authors want to thank IIT Indore for the conductive research environment. 
The second author is grateful to SERB for the Start-Up Research Grant SRG/2020/000144.

\end{document}